\documentclass[a4paper,reqno,10pt]{amsart}

\raggedbottom
\hfuzz3pt
\usepackage{epsf,graphicx,epsfig}
\usepackage{amscd}
\usepackage{amsmath,latexsym,amssymb,amsthm}
\usepackage[nospace,noadjust]{cite}
\usepackage{textcomp}
\usepackage{setspace,cite}
\usepackage{lscape,fancyhdr,fancybox}
\usepackage{stmaryrd}
\usepackage[all,cmtip]{xy}
\usepackage{tikz}
\usepackage{cancel}
\usetikzlibrary{shapes,arrows,decorations.markings}
\setlength{\unitlength}{0.4in}

\usepackage{graphicx}

\usepackage{color}
\usepackage{url}
\usepackage{enumerate}
\usepackage[mathscr]{euscript}

\setlength{\topmargin}{-0.5in}
\setlength{\textheight}{9.8in}
\setlength{\oddsidemargin}{-0.1in}
\setlength{\evensidemargin}{-.1in}
\setlength{\textwidth}{6.4in}

  \theoremstyle{plain}

\swapnumbers
    \newtheorem{thm}{Theorem}[section]
    \newtheorem{prop}[thm]{Proposition}

    \newtheorem{subsec}[thm]{}
\theoremstyle{definition}
    \newtheorem{defn}[thm]{Definition}
        \newtheorem{remark}[thm]{Remark}
    \newtheorem{exam}[thm]{Example}

\theoremstyle{remark}

\setcounter{tocdepth}{1}

\title{}
\author{}
\date{}
\usepackage{amssymb}

\usepackage{hyperref}
\hypersetup{
	colorlinks,
	citecolor=blue,
	filecolor=black,
	linkcolor=blue,
	urlcolor=black
}

\begin{document}

\title{Averaging algebras of any nonzero weight}

\author{Apurba Das}
\address{Department of Mathematics,
Indian Institute of Technology, Kharagpur 721302, West Bengal, India.}
\email{apurbadas348@gmail.com, apurbadas348@maths.iitkgp.ac.in}

\author{Ramkrishna Mandal}
\address{Department of Mathematics, Indian Institute of Technology, Kharagpur 721302, West Bengal, India.}
\email{ramkrishnamandal430@gmail.com}

\begin{abstract}
In this paper, we first introduce the notion of a (relative) averaging operator of any nonzero weight $\lambda$. We show that such operators are intimately related to triassociative algebras introduced by Loday and Ronco. Next, we construct a differential graded Lie algebra and a $L_\infty$-algebra whose Maurer-Cartan elements are respectively relative averaging operators of weight $\lambda$ and relative averaging algebras of weight $\lambda$. Subsequently, we define cohomology theories for relative averaging operators and relative averaging algebras of any nonzero weight. Our cohomology is useful to study infinitesimal deformations of relative averaging algebras. Finally, we introduce and study homotopy triassociative algebras and homotopy relative averaging operators of any nonzero weight $\lambda$.

\end{abstract}

\maketitle



\medskip

 {2020 MSC classification:} 16D20, 16W99, 16E40, 16S80.

 {Keywords:} Averaging algebras of nonzero weight, Triassociative algebras, $L_\infty$-algebras, Cohomology, Homotopy algebras.



\thispagestyle{empty}

\tableofcontents


\medskip

\section{Introduction}
\subsection{Averaging algebras}
The notion of averaging operator was first implicitly considered by O. Reynolds in 1895 \cite{rey}. His description was given in terms of idempotent Reynolds operators that appeared in the turbulence theory of fluid dynamics. In 1930, Kamp\'{e} de F\'{e}riet gave the explicit description of an averaging operator in the context of turbulence theory and functional analysis \cite{kampe}. An averaging operator generalizes the time average operator of real-valued functions defined in time space. Let $A$ be an associative algebra. A linear map $P: A \rightarrow A$ is said to be an averaging operator on the algebra $A$ if
\begin{align*}
    P(a)  P(b) = P ( P(a)  b) = P (a  P(b)), \text{ for } a, b \in A.
\end{align*}
An associative algebra $A$ equipped with a distinguished averaging operator on it is called an averaging algebra. In the last century, averaging operators and averaging algebras were mostly studied in the algebra of functions on a space \cite{birkhoff,kelly,gam-mill,mill,moy}. In the algebraic contexts, B. Brainerd \cite{brain} first studied the conditions for which an averaging operator can be realized as a generalization of the integral operator on the ring of real-valued measurable functions. In his PhD thesis, W. Cao \cite{cao} considered averaging operators from algebraic and combinatorial points of view. Among others, he described the Lie algebra and the Leibniz algebra induced from an averaging operator and construct the free commutative averaging algebra. Further algebraic study of averaging operators on any binary operad and their relations with bisuccessors, duplicators and Rota-Baxter operators are explicitly given in \cite{bai-imrn,pei-rep,pei-rep2}. Recently, J. Pei and L. Guo \cite{pei-guo} constructed the free (associative) averaging algebra using a class of bracketed words, called averaging words. It is worth mentioning that averaging operators also appeared in the context of Lie algebras with the name of embedding tensors. They have close connections with Leibniz algebras, tensor hierarchies and higher gauge theories \cite{bon,lavau,strobl,sheng-embedd}. 

\subsection{Diassociative algebras and triassociative algebras} In his study of Leibniz algebras, J.-L. Loday introduced the notion of a diassociative algebra (also called an associative dialgebra or simply a dialgebra) \cite{loday-di}. A diassociative algebra is a vector space $D$ equipped with two linear maps $\dashv, \vdash : D \otimes D \rightarrow D$ satisfying
\begin{align*}
 (x \dashv y) \dashv z =~& x \dashv (y \dashv z),\\
 (x \dashv y) \dashv z =~& x \dashv (y \vdash z),\\
 (x \vdash y) \dashv z =~& x \vdash (y \dashv z),\\
 (x \dashv y) \vdash z =~& x \vdash (y \vdash z),\\
 (x \vdash y) \vdash z =~& x \vdash (y \vdash z),
\end{align*}
for $x, y, z \in D.$ It turns out that both the operations $\dashv$ and $\vdash$ are associative. See \cite{frab,mm1,mm2,gon}  for some interesting studies on diassociative algebras. In \cite{aguiar} M. Aguiar first observes that an averaging operator $P: A \rightarrow A$ on an associative algebra $A$ induces a diassociative algebra structure $ a \dashv b := a  P(b)$  and $ a \vdash b : = P(a)  b$
on the underlying vector space $A$. However, the converse need not be true. In \cite{das} the author considered a notion of relative averaging algebra (a generalization of an averaging algebra) and showed that any diassociative algebra is always induced by a relative averaging algebra.

In \cite{loday-ronco} J.-L. Loday and M. Ronco introduced the notion of a triassociative algebra that consist of three associative operations satisfying additional compatibilities (cf. Theorem \ref{triass-defn}). It turns out that any triassociative algebra is intrinsically a diassociative algebra. In the same article, Loday and Ronco introduced tridendriform algebras and showed that triassociative algebras are Koszul dual to tridendriform algebras.

\subsection{Averaging operators and averaging algebras of nonzero weight}
Our first aim in the present paper is to introduce the concept of an averaging operator of any nonzero weight. Let $A$ be an associative algebra. We define an averaging operator of any nonzero weight $\lambda \in \mathbb{K}$ as a linear map $P: A \rightarrow A$ that satisfies
\begin{align*}
    P(a)  P(b) = P (P(a)  b) = P (a  P(b)) = \lambda P (a b), \text{ for } a, b \in A.
\end{align*}
It turns out that any averaging operator of weight $\lambda \neq 0$ is intrinsically an averaging operator. However, the converse need not be true (see Remark \ref{converse}). An associative algebra equipped with an averaging operator of nonzero weight $\lambda$ is called an averaging algebra of weight $\lambda$. We also introduce a generalization, called relative averaging algebra of weight $\lambda$. We show that any relative averaging algebra of weight $\lambda$ induces a triassociative algebra structure. We also consider the notion of a relative averaging algebra of weight $\lambda$ and show that any triassociative algebra is always induced by a relative averaging algebra of weight $1$. Hence we obtain the following diagram

\begin{align}
\xymatrix{
\text{Diassociative algebra} \ar[r]  & \text{(relative) averaging algebra} \ar[l] \\
\text{Triassociative algebra} \ar[r] \ar@{^{(}->}[u]^{\text{forgetful}} \ar[r] & \text{(relative) averaging algebra with weight.} \ar[l] \ar@{^{(}->}[u]_{\text{forgetful}}
}
\end{align}

\medskip

In the last twenty years, Rota-Baxter operators with weight were paid much attention due to their close relationships with combinatorics, Yang-Baxter equations and tridendriform algebras \cite{bai-imrn,fard}. A linear map $R: A \rightarrow A$ on an associative algebra $A$ is a Rota-Baxter operator of weight $\lambda$ if it satisfies
\begin{align*}
    R(a) R(b) = R( R(a) b + a R(b) + \lambda ab), \text{ for } a, b \in A.
\end{align*}
It turns out that any (relative) averaging operator of weight $\lambda$ is a (relative) Rota-Baxter operator of weight $-\lambda$. Therefore, a triassociative algebra naturally yields a tridendriform algebra structure that makes the following diagram commutative
\begin{align}\label{tri-tri-diag}
\xymatrix{
 \text{(relative) averaging algebra with weight } 1 \ar[r] \ar[d] & \text{(relative) Rota-Baxter with weight} -1  \ar[d] \\
 \text{Triassociative algebra} \ar[u] \ar@{..>}[r]  & \text{Tridendriform algebra.} \ar[u]
}
\end{align}

\subsection{Cohomology and deformations of averaging algebras of any nonzero weight} Cohomology is invariant for a given algebraic structure. The first instance of cohomology appeared in the work of Hochschild for associative algebras and later found important applications in deformation theory developed by Gerstenhaber \cite{gers}. Subsequently, their study was generalized to other algebraic structures (e.g. Lie algebras, Leibniz algebras) \cite{nij-ric,bala-leib}. See also \cite{mm2,yau2,loday-di,loday-ronco} for cohomology and deformations of diassociative algebras and triassociative algebras. 

In the present paper, we first construct a differential graded Lie algebra that characterizes relative averaging operators of nonzero weight $\lambda$ as its Maurer-Cartan elements. Subsequently, we define the cohomology of a relative averaging operator of weight $\lambda$. When one wants to define the cohomology of a relative averaging algebra of weight $\lambda$, the theory becomes much more delicate. To overcome this situation, we first use Voronov's derived bracket to construct a $L_\infty$-algebra whose Maurer-Cartan elements are precisely relative averaging algebras of any fixed weight $\lambda$. Then by a suitable twisting, we find the controlling $L_\infty$-algebra of a given relative averaging algebra of weight $\lambda$. Next, we define the cohomology of a given relative averaging algebra of weight $\lambda$ and as an application, we study infinitesimal deformations.


\subsection{Homotopy algebras}
Algebraic structures often appeared with their homotopy version. The concept of a strongly homotopy associative algebra (also called $A_\infty$-algebra) appeared in the work of J. Stasheff in the study of topological loop spaces \cite{stas}. Subsequently, various other homotopy algebraic structures were studied in the literature. Recently, some authors have considered homotopy averaging operators \cite{das,sheng-embedd}. In the present paper, we first explicitly define the notion of a $Triass_\infty$-algebra (strongly homotopy triassociative algebra) and show that any $Triass_\infty$-algebra is intrinsically a strongly homotopy diassociative algebra. Next, we define the notion of a homotopy relative averaging operator of weight $\lambda$ and show that such a homotopy operator induces a $Triass_\infty$-algebra structure.

\medskip

The paper is organized as follows. In Section \ref{sec2}, we recall some necessary background on triassociative algebras. In particular, we recall the graded Lie algebra whose Maurer-Cartan elements correspond to triassociative algebra structures on a given vector space. In Section \ref{sec3}, we introduce averaging operators and averaging algebras of any nonzero weight and construct the free averaging algebra of weight $\lambda$. In Section \ref{sec4}, we consider relative averaging operators of weight $\lambda$, their Maurer-Cartan characterization and define their cohomology theory. In Section \ref{sec5}, we study relative averaging algebras of any nonzero weight. In particular, we find out their relations with triassociative algebras. We also construct the $L_\infty$-algebra that characterizes relative averaging algebras of weight $\lambda$ as its Maurer-Cartan elements. In Sections \ref{sec6} and \ref{sec7}, we respectively study cohomology and deformations of a relative averaging algebra of weight $\lambda$. Finally, in Section \ref{sec8}, we introduce homotopy relative averaging operators of weight $\lambda$ and show that they induce homotopy triassociative algebra structures.

\medskip

All (graded) vector spaces, linear maps and tensor products are over a field $\mathbb{K}$ of characteristic $0$ unless specified otherwise.

\section{Triassociative algebras}\label{sec2}
Triassociative algebras are given by three associative multiplications satisfying some compatibilities. They were introduced by Loday and Ronco in the study of algebraic structures associated with planar trees. Here we recall the graded Lie algebra that plays an important role to study triassociative algebras.

\begin{defn}\label{triass-defn}
A {\bf triassociative algebra} is a vector space $D$ equipped with three linear maps $\dashv, \vdash, \perp : D \otimes D \rightarrow D$ satifying the following set of identities: for $x, y, z \in D$, 
\begin{align}
 (x \dashv y) \dashv z =~& x \dashv (y \dashv z),\label{a1}\\
 (x \dashv y) \dashv z =~& x \dashv (y \vdash z),\label{a2}\\
 (x \vdash y) \dashv z =~& x \vdash (y \dashv z),\label{a3}\\
 (x \dashv y) \vdash z =~& x \vdash (y \vdash z),\label{a4}\\
 (x \vdash y) \vdash z =~& x \vdash (y \vdash z),\label{a5}\\ 
 (x \dashv y) \dashv z =~& x \dashv (y \perp z),\label{a6}\\
 (x \perp y) \dashv z =~& x \perp (y \dashv z),\label{a7}\\
 (x \dashv y) \perp z =~& x \perp (y \vdash z),\label{a8}\\
 (x \vdash y) \perp z =~& x \vdash (y \perp z),\label{a9}\\
 (x \perp y) \vdash z =~& x \vdash (y \vdash z),\label{a10}\\
 (x \perp y) \perp z =~& x \perp (y \perp z).\label{a11}
\end{align}
We denote a triassociative algebra as above by $(D, \dashv, \vdash, \perp)$ or simply by $D$.
\end{defn}

Let $(D, \dashv, \vdash, \perp)$ and $(D', \dashv', \vdash', \perp')$ be two triassociative algebras. A {\bf morphism} of triassociative algebras from $(D, \dashv, \vdash, \perp)$ to $(D', \dashv', \vdash', \perp')$ is a linear map $\psi : D \rightarrow D'$ satisfying 
\begin{align*}
\psi (x \dashv y) = \psi (x) \dashv' \psi(y), \quad \psi (x \vdash y) = \psi (x) \vdash' \psi(y) ~~~ \text{ and } ~~~ \psi (x \perp y) = \psi (x) \perp' \psi (y), \text{ for } x, y \in D.
\end{align*}
The collection of all triassociative algebras and morphisms between them forms a category, denoted by {\bf Triass}. It follows from the identities (\ref{a1})-(\ref{a5}) that any triassociative algebra is intrinsically a diassociative algebra.



A planar tree is called $n$-tree if it has $(n+1)$ leaves and one root. We denote the set of all planar $n$-trees by $T_n$, for $n \geq 1$. Let $T_0$ be the set consisting of a root only. Thus, for small values of $n$, the set $T_n$'s are given by \\

\begin{center}
$T_0$ = \Bigg\{ \begin{tikzpicture}[scale=0.15]    
\draw (0,-2) -- (0,3);
\end{tikzpicture} \Bigg\}, \qquad $T_1$ = \Bigg\{  
\begin{tikzpicture}[scale=0.15]
\draw (0,0) -- (0,-2); \draw (0,0) -- (-3,3); \draw (0,0) -- (3,3);
\end{tikzpicture}
\Bigg\}, \qquad 
$T_2$ = \Bigg\{  
\begin{tikzpicture}[scale=0.15]
\draw (0,0) -- (0,-2); \draw (0,0) -- (-3,3);  \draw (0,0) -- (3,3); \draw (1.5,1.5) -- (0,3);
\end{tikzpicture} ~,~ 
\begin{tikzpicture}[scale=0.15]
 \draw (0,0) -- (0,-2); \draw (0,0) -- (-3,3);  \draw (0,0) -- (3,3); \draw (-1.5,1.5) -- (0,3);
\end{tikzpicture} ~,~
\begin{tikzpicture}[scale=0.15]
 \draw (0,0) -- (0,-2); \draw (0,0) -- (-3,3);  \draw (0,0) -- (3,3); \draw (0,0) -- (0,3);
\end{tikzpicture}
\Bigg\}, \\ \medskip \medskip $T_3$ = \Bigg\{
 \begin{tikzpicture}[scale=0.15]
 \draw (0,0) -- (0,-2); \draw (0,0) -- (-3,3);  \draw (0,0) -- (3,3); \draw (1,1) -- (-1,3); \draw (2,2) -- (1,3);
\end{tikzpicture} ~,~ 
\begin{tikzpicture}[scale=0.15]
 \draw (0,0) -- (0,-2); \draw (0,0) -- (-3,3);  \draw (0,0) -- (3,3); \draw (1,1) -- (-1,3); \draw (0,2) -- (1,3);
\end{tikzpicture} ~,~
\begin{tikzpicture}[scale=0.15]
 \draw (0,0) -- (0,-2); \draw (0,0) -- (-3,3);  \draw (0,0) -- (3,3); \draw (-2,2) -- (-1,3); \draw (2,2) -- (1,3);
\end{tikzpicture} ~,~ 
\begin{tikzpicture}[scale=0.15]
 \draw (0,0) -- (0,-2); \draw (0,0) -- (-3,3);  \draw (0,0) -- (3,3); \draw (-1,1) -- (1,3); \draw (0,2) -- (-1,3);
\end{tikzpicture} ~,~
\begin{tikzpicture}[scale=0.15]
 \draw (0,0) -- (0,-2); \draw (0,0) -- (-3,3);  \draw (0,0) -- (3,3); \draw (-1,1) -- (1,3); \draw (-2,2) -- (-1,3);
\end{tikzpicture} ~,~
\begin{tikzpicture}[scale=0.15]
 \draw (0,0) -- (0,-2); \draw (0,0) -- (-3,3);  \draw (0,0) -- (3,3); \draw (1,1) -- (-1,3); \draw (1,1) -- (1,3);
\end{tikzpicture} ~,~
\begin{tikzpicture}[scale=0.15]
 \draw (0,0) -- (0,-2); \draw (0,0) -- (-3,3);  \draw (0,0) -- (3,3); \draw (0,0) -- (0,3); \draw (2,2) -- (1.5,3);
\end{tikzpicture} ~,~
\begin{tikzpicture}[scale=0.15]
 \draw (0,0) -- (0,-2); \draw (0,0) -- (-3,3);  \draw (0,0) -- (3,3); \draw (0,2) -- (-1,3); \draw (0,2) -- (1,3); \draw (0,0) -- (0,2);
\end{tikzpicture} ~,~ 
\begin{tikzpicture}[scale=0.15]
 \draw (0,0) -- (0,-2); \draw (0,0) -- (-3,3);  \draw (0,0) -- (3,3); \draw (0,0) -- (0,3); \draw (-2,2) -- (-1.5,3);
\end{tikzpicture} ~,~
\begin{tikzpicture}[scale=0.15]
 \draw (0,0) -- (0,-2); \draw (0,0) -- (-3,3);  \draw (0,0) -- (3,3); \draw (-1,1) -- (1,3); \draw (-1,1) -- (-1,3);
\end{tikzpicture} ~,~
\begin{tikzpicture}[scale=0.15]
 \draw (0,0) -- (0,-2); \draw (0,0) -- (-3,3);  \draw (0,0) -- (3,3); \draw (0,0) -- (-1,3); \draw (0,0) -- (1,3);
\end{tikzpicture}
 \Bigg\}, ~~~ ~~~ \text{ ~~ etc.}
\end{center}

\medskip

Let $T^1$ be a $n_1$-tree, $T^2$ be a $n_2$-tree, \ldots , $T^k$ be a $n_k$-tree. Then their grafting $T^1 \vee \cdots \vee T^k$ is a $(n_1 + \cdots + n_k + k -1)$-tree which is obtained by arranging $T^1, \ldots, T^k$ from left to right and joining their roots to a new vertex and creating a new root from that vertex.

Let $T\in T_n$ be an $n$-tree. We label the $(n+1)$ leaves of $T$ by $0,1,\ldots,n$ (from left to right). A leaf is said to be left-oriented (resp. right-oriented) if it is the left most (resp. right most) leaf of the vertex underneath it. A leaf is called a middle leaf if it is neither left-oriented nor right-oriented. Note that, for each $n\geq 1$ and $0\leq i\leq n$, there is a map $d_i:T_n\rightarrow T_{n-1}$, $T\mapsto d_i(T)$ which is obtained from $T$ by removing the $i$-th leaf of $T$. Moreover, for each $0 \leq i \leq n$, there is a map $\bullet_i : T_n \rightarrow \{ \dashv, \vdash, \perp \}, T \mapsto \bullet_i^T$ given by
\begin{align*}
    \bullet_0^T = \begin{cases}
        \dashv & \text{ if } T = | \vee T^1 \text{ for some } (n-1)\text{-tree } T^1 \\
        \perp & \text{ if } T = | \vee T^1 \vee \cdots \vee T^k \text{ for } k > 1 \\
        \vdash & \text{ otherwise},\\
    \end{cases} \quad 
      \bullet_n^T = \begin{cases}
        \vdash & \text{ if } T =  T^1 \vee | \text{ for some } (n-1)\text{-tree } T^1 \\
        \perp & \text{ if } T = T^1 \vee \cdots \vee T^k \vee | \text{ for } k > 1 \\
        \dashv & \text{ otherwise},\\
    \end{cases}
\end{align*}
and for $1 \leq i \leq n-1$,
\begin{align*}
  \bullet_i^T = \begin{cases}
        \dashv & \text{ if the } i\text{-th leaf of } T \text{ is left oriented} \\
        \vdash &  \text{ if the } i\text{-th leaf of } T \text{ is right oriented}  \\
        \perp &  \text{ if the } i\text{-th leaf of } T \text{ is a middle leaf.}\\
    \end{cases}  
\end{align*}


\medskip

In \cite{yau} the author considers a graded Lie algebra (associated with a vector space $D$) whose Maurer-Cartan elements correspond to triassociative algebra structures on $D$. This graded Lie algebra plays a crucial role in the whole study of the present paper. First recall that, for each $m,n\geq 1$ and $0\leq i\leq m$, there are maps $R_0^{m;i,n} : T_{m+n-1}\rightarrow T_m$ and $R_i^{m;i,n} : T_{m+n-1}\rightarrow T_n$ given by 
\begin{align}
 R_0^{m;i,n}(T) = \widehat{d_0}\circ \widehat{d_1}\circ\cdots \circ \widehat{d_{i-1}}\circ d_i\circ \cdots \circ d_{i+n-2}\circ \widehat{d_{i+n-1}}\circ \cdots \circ \widehat{d_{m+n-1}}(T),\\
 R_i^{m;i,n}(T) = d_0\circ d_1\circ\cdots \circ d_{i-2}\circ \widehat{d_{i-1}}\circ \cdots \circ \widehat{d_{i+n-1}}\circ d_{i+n}\circ \cdots \circ d_{m+n-1}(T),
\end{align}
for $T \in T_{m+n-1}$. With these building blocks, the graded space \begin{align*}\oplus_{n=1}^{\infty} C^{n}(D,D) = \oplus_{n=1}^\infty \mathrm{Hom}(\mathbb{K}[T_n] \otimes D^{\otimes n}, D)\end{align*}  can be given a degree -1 graded Lie bracket \begin{align*}
 {[f,g]} = \displaystyle\sum_{i=1}^{m} (-1)^{(i-1)(n-1)} f\circ_i g -  (-1)^{(m-1)(n-1)} \displaystyle\sum_{i=1}^{n} (-1)^{(i-1)(m-1)} g\circ_i f,
\end{align*}
where 
\begin{align}\label{tri-circ}
 (f\circ_i g)(T;x_1,\ldots,x_{m+n-1}) = f(R_0^{m;i,n}(T);x_1,\ldots,x_{i-1},g(R_i^{m;i,n}(T);x_i,\ldots,x_{i+n-1}),x_{i+n},\ldots,a_{m+n-1}),  
\end{align}
for $T \in T_{m+n-1}$ and $x_1,\ldots,x_{m+n-1}\in D$.
That is, the shifted graded space $\displaystyle \oplus_{n=0}^{\infty} C^{n+1}(D,D)$ with the bracket $[~,~]$ becomes a graded Lie algebra.

\begin{thm}
Let $D$ be a vector space. Then there is a one-to-one correspondence between triassociative algebra structures on $D$ and Maurer-Cartan elements in the graded Lie algebra $\big( \oplus_{n=0}^{\infty} C^{n+1}(D,D),[~,~]\big).$   
\end{thm}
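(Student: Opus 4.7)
The plan is to identify degree-one elements of the (shifted) graded Lie algebra $\big(\bigoplus_{n\geq 0}C^{n+1}(D,D),[~,~]\big)$ with triples of bilinear operations on $D$, and then translate the Maurer--Cartan equation $[\pi,\pi]=0$, evaluated tree by tree on the eleven elements of $T_3$, into the eleven defining identities (\ref{a1})--(\ref{a11}) of a triassociative algebra.

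First I would exploit the numerical coincidence $|T_2|=3$: the rule $T\mapsto\bullet_1^T$ gives a bijection between $T_2$ and $\{\dashv,\vdash,\perp\}$. Hence a degree-one element $\pi\in C^2(D,D)=\operatorname{Hom}(\mathbb{K}[T_2]\otimes D^{\otimes 2},D)$ is precisely a triple of bilinear operations on $D$, via
\[
\pi(T;x,y):=x\,\bullet_1^T\,y\qquad(T\in T_2,\ x,y\in D).
\]
So degree-one elements biject with candidate triassociative structures, and it remains to show that the Maurer--Cartan condition singles out exactly those satisfying the eleven axioms.

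Next I would specialise the bracket formula to $m=n=2$. Both summations collapse to a single sign pattern and give
\[
[\pi,\pi]\;=\;2\big(\pi\circ_1\pi-\pi\circ_2\pi\big)\in C^3(D,D),
\]
so $[\pi,\pi]=0$ is equivalent, for every $T\in T_3$ and every $x_1,x_2,x_3\in D$, to $(\pi\circ_1\pi)(T;x_1,x_2,x_3)=(\pi\circ_2\pi)(T;x_1,x_2,x_3)$. Expanding with (\ref{tri-circ}), this reads
\[
\big(x_1\bullet_1^{R_1^{2;1,2}(T)}x_2\big)\bullet_1^{R_0^{2;1,2}(T)}x_3\;=\;x_1\bullet_1^{R_0^{2;2,2}(T)}\big(x_2\bullet_1^{R_2^{2;2,2}(T)}x_3\big).
\]

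The bulk of the argument, and the step I expect to be the main obstacle, is a purely combinatorial bookkeeping verification: for each of the eleven trees $T\in T_3$ one must compute the four two-trees produced by the face-map composites $R_0^{2;1,2}(T)$, $R_1^{2;1,2}(T)$, $R_0^{2;2,2}(T)$, $R_2^{2;2,2}(T)$, read off the operation assigned by $\bullet_1$ to each, and check that the resulting identity coincides with exactly one of the axioms (\ref{a1})--(\ref{a11}). The match is one-to-one because $|T_3|=11$ equals the number of axioms, and because the face maps $d_i$ preserve the left-oriented, right-oriented, or middle nature of the surviving leaves in a way that mirrors the position analysis defining $\bullet_0^T$, $\bullet_n^T$ and $\bullet_i^T$. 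Since each step of this dictionary is reversible, a triple $(\dashv,\vdash,\perp)$ satisfies the eleven axioms if and only if the associated $\pi$ is a Maurer--Cartan element, yielding the claimed bijection.
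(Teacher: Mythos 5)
Your proposal is correct and follows essentially the same route the paper intends: the theorem is recalled from Yau's work without proof, but the remark immediately following it sets up precisely your dictionary $\pi(T;x,y)=x\,\bullet_1^T\,y$ between elements of $C^2(D,D)$ and triples of operations, and the reduction of $[\pi,\pi]=0$ to $\pi\circ_1\pi=\pi\circ_2\pi$ checked on the eleven trees of $T_3$ is the standard argument. The only part you leave implicit, the tree-by-tree computation of the maps $R_0^{2;i,2}$ and $R_i^{2;i,2}$ matching the eleven axioms (\ref{a1})--(\ref{a11}), is exactly the bookkeeping the paper also omits.
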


Note that an element $\pi\in C^2(D,D)$ corresponds to three multiplications  $\dashv, \vdash, \perp:  D\otimes D \rightarrow D$ given by \begin{align*}
 x\dashv y = \pi (\begin{tikzpicture}[scale=0.10]
\draw (0,0) -- (0,-2); \draw (0,0) -- (-3,3);  \draw (0,0) -- (3,3); \draw (1.5,1.5) -- (0,3);
\end{tikzpicture} ;x,y ), \quad x\vdash y = \pi (\begin{tikzpicture}[scale=0.10]
 \draw (0,0) -- (0,-2); \draw (0,0) -- (-3,3);  \draw (0,0) -- (3,3); \draw (-1.5,1.5) -- (0,3);
\end{tikzpicture};x,y ) ~~~ \text{ and }  ~~~ x\perp y =  \pi (\begin{tikzpicture}[scale=0.10]
 \draw (0,0) -- (0,-2); \draw (0,0) -- (-3,3);  \draw (0,0) -- (3,3); \draw (0,0) -- (0,3);
\end{tikzpicture};x,y ) ,
\end{align*}
for $x,y\in D$. Then $[\pi,\pi] = 0$ if and only if the operations $\dashv, \vdash, \perp$ make $D$ into a triassociative algebra. Given a triassociative algebra $(D, \dashv, \vdash, \perp)$, one can define a map  $\delta_{\mathrm{Triass}} : C^n{(D,D)}\rightarrow C^{n+1}{(D,D)}$ by \begin{align*}
\delta_{\mathrm{Triass}}(f) = (-1)^{n-1}[\pi,f], \text{ for } f\in C^n(D,D).
\end{align*}
The map $\delta_\mathrm{Triass}$ is explicitly given by
\begin{align*}
(\delta_{\mathrm{Triass}}(f))(T;x_1,\ldots,x_{n+1})
=~& x_1 \bullet_0^T f({d_0(T);x_2,\ldots,x_{n+1}}) + (-1)^{n+1}f({d_{n+1}(T);x_1,\ldots,x_{n}})\bullet_{n+1}^T x_{n+1} \\
&+ \sum_{i=1}^{n}(-1)^i~f({d_i(T);x_1,\ldots,x_{i-1},x_i\bullet_i^T x_{i+1},\ldots ,x_{n+1}}),
\end{align*}
for $f \in C^n(D,D),~ T \in T_{n+1}$ and $x_1,\ldots,x_{n+1}\in D$. It turns out that $\{ C^\bullet (D,D), \delta_\mathrm{Triass} \}$ is a cochain complex. The corresponding cohomology is the cohomology of the triassociative algebra $D$.

\section{Averaging operators and averaging algebras of any nonzero weight}\label{sec3}
In this section, we first introduce the notion of an averaging operator of any nonzero weight $\lambda \in \mathbb{K}$. 
We also construct free averaging algebra of any nonzero weight $\lambda$ over any nonempty set $X.$


\begin{defn}
 Let $A$ be an associative algebra. A linear map $P: A\rightarrow A$ is said to be an {\bf averaging operator of weight $\lambda$} $(\neq 0)$  if 
 \begin{align}\label{avg-wgt-l}
 P(a) P(b) = P(P(a) b) = P(a P(b)) = \lambda P(a b), \text{ for all } a,b\in A.    
 \end{align}
\end{defn}

It follows from the above definition that an averaging operator of nonzero weight $\lambda$ is automatically an averaging operator on $A$. The above definition can be equivalently stated as follows: A linear map $P: A\rightarrow A$ is said to be an averaging operator of weight $\lambda~(\neq 0)$  if and only if $(1/\lambda) P$ is an averaging operator and an algebra morphism. This also suggests that we can concentrate only on averaging operators of weight $1$.

\begin{remark}
The above definition of an averaging operator of nonzero weight $\lambda$ doesn't get back to the usual averaging operator if we substitute $\lambda = 0$. However, the identities in (\ref{avg-wgt-l}) can be equivalently rephrased as
\begin{align*}
P(a)  P(b) = P (a P(b)) = P(P(a) b) ~~~~ \text{ and } ~~~~ \lambda P(a) P(b) = \lambda^2 P(a  b), \text{ for } a, b \in A.
\end{align*}
Thus, an averaging operator of weight $\lambda$ is an averaging operator (in the usual sense) satisfying additionally $\lambda P(a) P(b) = \lambda^2 P(a b)$, for all $a, b \in A$. With this consideration, an averaging operator on $A$ is simply an averaging operator of weight $0$. 
\end{remark}

\begin{remark}\label{converse}
It follows from the above definition that any averaging operator of nonzero weight $\lambda$ is intrinsically an averaging operator. However, the converse need not be true. Let $A$ be an associative algebra and $e \in \mathrm{Cent}(A)$ be any element from the centre of $A$. Then the linear map $P_e: A \rightarrow A,~ P_e (a) = ea$ is an averaging operator but in general not an averaging operator of any nonzero weight.
\end{remark}


\begin{exam}
 Let A be an associative algebra. Then the identity map $P = \mathrm{id}_A: A\rightarrow A $ is an averaging operator of weight 1.  
\end{exam}

\begin{exam}
Let $P_1$ and $P_2$  be two commuting averaging operators of weight $1$ on an associative algebra $A$. Then their composition $P_1P_2$ is an averaging operator of the same weight $1$. In particular, any power of an averaging operator of weight $1$ is also an averaging operator of weight $1$.   
\end{exam}

\begin{exam}
 An unital algebra homomorphism $P: A \rightarrow A$ is idempotent if and only if it is an averaging operator of weight $1$.
\end{exam}

\begin{exam}
Let $A = A_0 \oplus A_1$ be a superalgebra (i.e. the multiplication satisfies $A_i  A_j \subset A_{i+j}$ for $0 \leq i, j , i+j \leq 1$ and $A_1 A_1 = 0$). Then the projection map $P: A \rightarrow A$, $(a_0, a_1) \mapsto a_0$ is an averaging operator of weight $1$.
\end{exam}

\begin{exam}
 Let $G$ be a finite group and $\mathbb{K}[G]$ be the group algebra of $G$. Then the linear map $P:\mathbb{K}[G]\rightarrow \mathbb{K}[G]$ defined by $a\mapsto \big(\displaystyle\sum_{g\in G} g\big) a$ is an averaging operator of weight $n$ (where $n$ is the order of the group $G$). To see this, we first observe that the element $z:= \sum_{g \in G} g$ is in the centre of $\mathbb{K}[G]$ and $z^2 = n z$. Hence for any $a, b \in \mathbb{K}[G]$, we have
 \begin{align*}
     P(a) P(b) = (za) (zb) = \begin{cases}
         = z ((za)b) = P(P(a) b ),\\
         = z (a(zb)) = P (a P(b)),\\
         =  z^2 (ab) = n P(ab).
     \end{cases}
 \end{align*}
\end{exam}


\begin{exam}
Let $A$ be a commutative associative algebra with unity $1_A$. Consider the tensor product algebra $A \otimes \mathrm{Sym}(A)$, where $\mathrm{Sym}(A)$ is the symmetric algebra on $A$. Define a linear map 
\begin{align*}
P:  A \otimes \mathrm{Sym}(A) \rightarrow A  \otimes \mathrm{Sym}(A),~ P(\sum_i a_i \otimes s_i) = \sum_i 1_A \otimes a_i s_i,
\end{align*}
where $a_i s_i$ is the product in $\mathrm{Sym}(A)$. Then $P$ is an averaging operator of weight $1$.
\end{exam}
\begin{prop}
 Let $A$ be an associative algebra and $P$ be an averaging operator of weight 1 on A. Then for every $\phi\in \mathrm{Aut}(A)$, the operator $P^{\phi} = {\phi}^{-1}P\phi$ is also an averaging operator of weight 1.   
\end{prop}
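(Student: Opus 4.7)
The plan is a direct verification: I would show that each of the three defining identities for an averaging operator of weight $1$ transfers from $P$ to $P^\phi$ by conjugation, using only the fact that $\phi$ (and hence $\phi^{-1}$) is an algebra automorphism of $A$.

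First I would fix $a,b \in A$ and compute $P^\phi(a)P^\phi(b) = \phi^{-1}(P(\phi(a)))\,\phi^{-1}(P(\phi(b)))$, then push the product inside $\phi^{-1}$ (since $\phi^{-1}$ is multiplicative) to get $\phi^{-1}\bigl(P(\phi(a))\,P(\phi(b))\bigr)$. Applying the four-term identity satisfied by $P$ at weight $1$, namely
\begin{align*}
P(\phi(a))\,P(\phi(b)) = P\bigl(P(\phi(a))\,\phi(b)\bigr) = P\bigl(\phi(a)\,P(\phi(b))\bigr) = P(\phi(a)\phi(b)) = P(\phi(ab)),
\end{align*}
I obtain $P^\phi(a)P^\phi(b) = \phi^{-1}(P(\phi(ab))) = P^\phi(ab)$, which gives the weight-$1$ identity.

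Next I would check the two ``mixed'' identities. For instance,
\begin{align*}
P^\phi\bigl(P^\phi(a)\,b\bigr) = \phi^{-1}\Bigl(P\bigl(\phi(\phi^{-1}(P(\phi(a)))\cdot b)\bigr)\Bigr) = \phi^{-1}\bigl(P(P(\phi(a))\cdot \phi(b))\bigr),
\end{align*}
and then applying the averaging identity for $P$ turns the inner expression into $P(\phi(a))P(\phi(b))$, after which splitting the product across $\phi^{-1}$ yields $P^\phi(a)P^\phi(b)$. The identity $P^\phi(a\,P^\phi(b)) = P^\phi(a)P^\phi(b)$ is entirely symmetric.

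There is no real obstacle; the only point to keep track of is that $\phi^{-1}$ is also an algebra homomorphism, which is automatic from $\phi \in \mathrm{Aut}(A)$. Conceptually, this simply records the fact that the set of averaging operators of weight $1$ is stable under conjugation by the automorphism group of $A$.
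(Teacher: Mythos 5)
Your proposal is correct and follows essentially the same route as the paper: a direct verification that conjugation by $\phi$ transports each defining identity of $P$ to $P^\phi$, using only that $\phi$ and $\phi^{-1}$ are multiplicative. The paper likewise verifies one identity in full and dismisses the remaining ones as analogous, so there is nothing to add.
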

\begin{proof}
For all $a,b\in A$, we have
\begin{align*}
 P^{\phi}(a P^{\phi}(b)) =  P^{\phi}\big(a ({\phi}^{-1}P\phi)(b)\big) &= {\phi}^{-1}P\phi\big(a ({\phi}^{-1}P\phi)(b)\big)\\
 &= {\phi}^{-1}P\big(\phi(a)P\phi(b)\big) \\
 &= {\phi}^{-1}\big(P\phi(a)P\phi(b)\big)\\
 &= ({\phi}^{-1}P\phi)(a) ({\phi}^{-1}P\phi)(b)\\
 &= P^{\phi}(a)P^{\phi}(b).
\end{align*}
Similarly, we can show that $
   P^{\phi}(P^{\phi}(a)b) =  P^{\phi}(a)P^{\phi}(b)$
 and $P^{\phi}(ab) = P^{\phi}(a)P^{\phi}(b).$ This completes the proof.
\end{proof}
Note that the identity (\ref{avg-wgt-l}) for $\lambda = 1$ is set-theoretic. Hence averaging operator of weight 1 is also defined on a semigroup. There is a close relationship between Rota-Baxter operators of weight 1 and averaging operators of weight 1.
\begin{prop}
Let $\mathbb{Q}[x]$ be the algebra of polynomials over $\mathbb{Q}$. Define a map $R:\mathbb{Q}[x]\rightarrow \mathbb{Q}[x]$ by $R(x^n)=\beta(n)x^{\theta(n)}$ with $\beta(n)\in {\mathbb{Q}_{> 0}} $ and $\theta(n)\in  \mathbb{N} \cup \{ 0 \}$. Then $R$ is a Rota-Baxter operator of weight 1 if and only if $\theta$ is an averaging operator of weight 1 on the semigroup $(\mathbb{N} \cup \{ 0 \},+)$  and 
\begin{align*}
\beta(m)\beta(n) = \beta(m) \beta(\theta (m) + n) =  \beta(m + \theta (n)) \beta(n)  = \beta(m+n), \text{ for all } m, n \geq 0.
\end{align*}
\end{prop}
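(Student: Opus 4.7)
The plan is to reduce the Rota-Baxter identity, which is bilinear in the pair $(a, b)$, to a family of scalar equations indexed by pairs $(m, n) \in \mathbb{N} \cup \{0\}$ by testing it on the monomial basis $a = x^m$, $b = x^n$ of $\mathbb{Q}[x]$. Using the defining formula $R(x^k) = \beta(k)\, x^{\theta(k)}$, the left-hand side $R(x^m)\, R(x^n)$ collapses to the single monomial $\beta(m)\beta(n)\, x^{\theta(m)+\theta(n)}$, while the right-hand side expands into a sum of three monomial terms carrying the coefficients $\beta(m)\beta(\theta(m)+n)$, $\beta(n)\beta(m+\theta(n))$, $\beta(m+n)$ at the respective exponents $\theta(\theta(m)+n)$, $\theta(m+\theta(n))$, $\theta(m+n)$. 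The Rota-Baxter condition on $R$ is therefore equivalent to the equality of these two polynomials in $\mathbb{Q}[x]$ for every $m, n \geq 0$.

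For the $(\Leftarrow)$ implication, the assumed averaging identities of weight $1$ for $\theta$ on the semigroup $(\mathbb{N} \cup \{0\}, +)$ read $\theta(\theta(m)+n) = \theta(m+\theta(n)) = \theta(m+n) = \theta(m) + \theta(n)$, so all three monomial terms on the right-hand side share the single exponent appearing on the left; substituting the chain of $\beta$-equalities then balances the coefficients and recovers the Rota-Baxter relation. For the $(\Rightarrow)$ implication, the strategy is to exploit the linear independence of distinct monomials in $\mathbb{Q}[x]$ together with the strict positivity of all $\beta$-values: because no two positive contributions on the right-hand side can cancel one another, the only configuration that makes the right-hand side equal the single-monomial left-hand side is the simultaneous collapse of all three exponents to $\theta(m)+\theta(n)$, which immediately yields the averaging identities for $\theta$. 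The surviving coefficient equation, compared term by term against the individual $\beta$-products, then produces the stated chain of $\beta$-identities.

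The step I expect to be the main obstacle is this extraction of the full chain of $\beta$-equalities in the forward direction: once the exponents are pinned down, the Rota-Baxter equation produces only a single aggregate identity among the values of $\beta$, whereas the proposition demands the finer pairwise equalities $\beta(m)\beta(n) = \beta(m)\beta(\theta(m)+n) = \beta(m+\theta(n))\beta(n) = \beta(m+n)$. Separating this aggregate into its pointwise constituents will require varying $(m, n)$ over carefully chosen test pairs (for example $n = 0$, or applying the substitution $m \mapsto \theta(m)$ allowed by the averaging property of $\theta$) and then bootstrapping the resulting identities, using positivity of $\beta$ throughout to rule out the degenerate partial-collapse configurations of the three exponents that would otherwise complicate the coefficient matching.
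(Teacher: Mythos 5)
Your reduction to the monomial basis is exactly the paper's first step, and your observation that positivity of $\beta$ forces the three exponents on the right to collapse onto $\theta(m)+\theta(n)$ is also how the paper argues. But the step you flag as ``the main obstacle'' is not a technical difficulty to be bootstrapped away --- it is a genuine inconsistency, and no choice of test pairs will resolve it. Once the exponents collapse, the Rota--Baxter equation of weight $1$ (in the paper's convention $R(a)R(b)=R(R(a)b+aR(b)+ab)$) yields the single aggregate identity
\begin{align*}
\beta(m)\,\beta(\theta(m)+n)+\beta(m+\theta(n))\,\beta(n)+\beta(m+n)=\beta(m)\,\beta(n),
\end{align*}
a sum of three strictly positive terms. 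If the pairwise equalities $\beta(m)\beta(n)=\beta(m)\beta(\theta(m)+n)=\beta(m+\theta(n))\beta(n)=\beta(m+n)$ held, that sum would equal $3\,\beta(m)\beta(n)$, forcing $\beta(m)\beta(n)=0$ and contradicting $\beta>0$. So the forward implication cannot produce the stated chain, and the backward implication fails for the same reason: under the stated hypotheses the right-hand side evaluates to $3\,\beta(m)\beta(n)\,x^{\theta(m)+\theta(n)}$, which is three times the left-hand side. A concrete witness is $\theta=\mathrm{id}$, $\beta\equiv 1$, i.e.\ $R=\mathrm{id}_{\mathbb{Q}[x]}$: it satisfies every condition on the right of the proposition, yet $R(R(a)b+aR(b)+ab)=3ab\neq ab=R(a)R(b)$.

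To be fair to you, the paper's own proof commits exactly the same error: after deriving the displayed polynomial identity it simply asserts that it ``holds if and only if'' all four exponents agree and all four coefficients agree, silently replacing the sum of the three right-hand coefficients by termwise equality. So you have reproduced the paper's argument faithfully, flaw included; the difference is that you noticed the problematic step. The statement is evidently meant for the opposite sign convention (an averaging operator of weight $\lambda$ is a Rota--Baxter operator of weight $-\lambda$, as the paper itself notes in the introduction): with $R(a)R(b)=R(R(a)b+aR(b)-ab)$ the backward direction goes through, since the $-\beta(m+n)x^{\theta(m+n)}$ term cancels one copy of $\beta(m)\beta(n)x^{\theta(m)+\theta(n)}$. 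Even then the forward direction needs more care than either you or the paper supplies, because with a negative term present the exponents need not all collapse a priori --- partial cancellations between the $-R(ab)$ term and one of the positive terms have to be excluded before the coefficient comparison can begin.
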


\begin{proof}
Note that $R$ is a Rota-Baxter operator of weight $1$ if and only if
\begin{align}
\label{ra} \beta(m)\beta(n)x^{\theta (m)+\theta (n)} = \beta(m) \beta(\theta (m) + n)  x^{\theta (\theta (m)+n)} + \beta(m + \theta (n))\beta(n) x^{\theta (m+\theta (n))}  + \beta(m+n)x^{\theta(m+n)},  
\end{align}
for all $m,n \geq 0$. Since $\beta(n)\in {\mathbb{Q}_{> 0}}$, so all the coefficients in Equation (\ref{ra}) are nonzero. Hence the identity (\ref{ra}) holds if and only if
\begin{align*}
   &\theta (m)+\theta (n) = \theta (\theta (m)+n)  =  \theta (m+\theta (n))) = \theta(m+n),\\
& \beta(m)\beta(n) = \beta(m) \beta(\theta (m) + n)=  \beta(m + \theta (n)) \beta(n)  = \beta(m+n).
\end{align*}
This completes the proof.
\end{proof}

\subsection{Free averaging algebra of nonzero weight $\lambda$}  An {\bf averaging algebra of weight $\lambda$} is a pair $(A, P)$ consisting of an associative algebra $A$ equipped with an averaging operator $P: A \rightarrow A$ of weight $\lambda$. Let $(A, P)$ and $(A', P')$ be two averaging algebras of same weight $\lambda$. A morphism of averaging algebras of weight $\lambda$ from $(A, P)$ to $(A',P')$ is given by an algebra homomorphism $\varphi : A \rightarrow A'$ that satisfies $P' \circ \varphi = \varphi \circ P$. In this subsection, we define the free averaging algebra of any nonzero weight $\lambda$ over any nonempty set $X$. This construction is similar to the free averaging algebra given in \cite{pei-guo}.

\begin{defn}
Let $X$ be a nonempty set. The {\bf free averaging algebra of weight $\lambda$} over the set $X$ is an averaging algebra $(\mathcal{A}(X), \mathcal{P})$ of weight $\lambda$ with an inclusion $i: X \rightarrow \mathcal{A}(X)$ that satisfies the following universal condition: for any averaging algebra $(A, P)$ of weight $\lambda$ and a set map $f: X \rightarrow A$, there exists a unique morphism of averaging algebras $\widetilde{f} : (\mathcal{A}(X), \mathcal{P})  \rightsquigarrow (A, P)$ such that $\widetilde{f} \circ i = f.$
\end{defn}

Let $X$ be a nonempty set. Let $\mathcal{M}_0(X) = \mathcal{M}(X)$ be the free monoid on $X$ and we define $\mathcal{M}_1 (X) = \mathcal{M} (X \sqcup \lfloor  \mathcal{M}_0(X)  \rfloor )$. For any set $S$, here $\lfloor S \rfloor = \{ \lfloor s \rfloor | s \in S \}$ is a copy of $S$ and the notation $\sqcup$ stands for the disjoint union. The inclusion map $X \hookrightarrow X \sqcup \lfloor  \mathcal{M}_0(X)  \rfloor$ induces a monomorphism $i_0 :  \mathcal{M}_0(X) \hookrightarrow  \mathcal{M}_1(X)$. Through this map, we can identify $ \mathcal{M}_0(X)$ with its image in $ \mathcal{M}_1(X)$. For each $n \geq 1$, we will now construct a monoid $ \mathcal{M}_n(X)$ and an embedding $i_{n-1} :  \mathcal{M}_{n-1}(X) \hookrightarrow  \mathcal{M}_n (X)$ as follows. Suppose the monoid $ \mathcal{M}_{n-1}(X)$ has been constructed with the embedding $i_{n-2} :  \mathcal{M}_{n-2} (X) \hookrightarrow  \mathcal{M}_{n-1} (X)$. Define 
\begin{align*}
     \mathcal{M}_n(X) =  \mathcal{M} (X \sqcup \lfloor   \mathcal{M}_{n-1} (X) \rfloor).
\end{align*}
Since $ \mathcal{M}_{n-1} (X)= \mathcal{M} (X \sqcup  \lfloor   \mathcal{M}_{n-2} (X) \rfloor)$ is the free monoid on the set $X \sqcup  \lfloor   \mathcal{M}_{n-2} (X) \rfloor$, the injection map $X \sqcup  \lfloor   \mathcal{M}_{n-2} (X) \rfloor \hookrightarrow X \sqcup  \lfloor   \mathcal{M}_{n-1} (X) \rfloor$ induces an embedding
\begin{align*}
\mathcal{M}_{n-1} (X) = \mathcal{M} (X \sqcup \lfloor   \mathcal{M}_{n-2} (X) \rfloor) \xrightarrow{i_n} \mathcal{M} (X \sqcup \lfloor   \mathcal{M}_{n-1} (X) \rfloor) = \mathcal{M}_n (X).
\end{align*}
We define the monoid $\overline{\mathcal{M} (X)}$ as the direct limit of the monoids $\mathcal{M}_0 (X) \subset \mathcal{M}_1 (X) \subset \cdots \subset \mathcal{M}_n (X) \subset \cdots .$ That is, $ \overline{\mathcal{M} (X)} =  \lim_{n\to\infty} \mathcal{M}_n (X) = \bigcup_{n=0}^\infty \mathcal{M}_n (X)$. The elements of $\overline{\mathcal{M}(X)}$ are often called bracketed words or bracketed monomials on $X$. Let $\mathbb{K} [\overline{\mathcal{M}(X)}]$ be the free vector space over the set $\overline{\mathcal{M}(X)}$. Note that the monoid product (concatenation product) on $\overline{\mathcal{M}(X)}$ can be extended to a multiplication on $\mathbb{K} [\overline{\mathcal{M}(X)}]$ that makes $\mathbb{K} [\overline{\mathcal{M}(X)}]$ into an associative algebra. On the other hand, the map $\lfloor ~ \rfloor: \overline{\mathcal{M}(X)} \rightarrow \overline{\mathcal{M}(X)}$, $x \mapsto \lfloor x \rfloor$, for $x \in \overline{\mathcal{M}(X)}$ can be extended to a linear map (which we denote by the same notation) $\lfloor ~ \rfloor: \mathbb{K} [\overline{\mathcal{M}(X)}] \rightarrow \mathbb{K} [\overline{\mathcal{M}(X)}]$. Define a subset $\mathcal{S} \subset \mathbb{K} [\overline{\mathcal{M}(X)}]$ by
\begin{align*}
    \mathcal{S} = \big\{  \lfloor x \rfloor \lfloor y \rfloor - \lfloor \lfloor x \rfloor y \rfloor,~ \lfloor x \rfloor \lfloor y \rfloor - \lfloor x \lfloor y \rfloor \rfloor,~ \lfloor x \rfloor \lfloor y \rfloor - \lambda \lfloor x y \rfloor ~|~ x, y \in  \overline{\mathcal{M}(X)}  \big\}.
\end{align*}
Let $I$ be the smallest operated ideal of $\mathbb{K} [\overline{\mathcal{M}(X)}]$ containing the set $\mathcal{S}$. Define $\mathcal{A}(X) = \mathbb{K} [\overline{\mathcal{M}(X)}] / I$ and a linear map $\mathcal{P} : \mathcal{A}(X) \rightarrow \mathcal{A}(X)$ induced by the map $\lfloor ~ \rfloor$. Then it turns out that $(\mathcal{A}(X), \mathcal{P})$ is an averaging algebra of weight $\lambda$.

Let $i : X \hookrightarrow \mathcal{A}(X)$ be the composition of the maps
\begin{align*}
X \hookrightarrow \mathcal{M}_0 (X) \subset \mathcal{M}_1 (X) \subset \cdots \subset \mathcal{M}_n (X) \subset \cdots \subset \cup_{n=1}^\infty \mathcal{M}_n (X) = \overline{\mathcal{M}(X)} \subset \mathbb{K} [ \overline{\mathcal{M}(X)}  ] \xrightarrow{q} \mathcal{A}(X).
\end{align*}
Let $(A, P)$ be any relative averaging algebra of weight $\lambda$ and $f: X \rightarrow A$ be a set map. We define a map $\overline{f} : \overline{\mathcal{M}(X)} \rightarrow A$ as follows. For $w = x_1 \cdots x_m \in \mathcal{M}_0 (X) \subset \overline{\mathcal{M}(X)}$, where $x_i \in X~(1 \leq i \leq m)$, we define
$\overline{f} (w) = f(x_1) \cdots f(x_m)$. Since $\mathcal{M}_1 (X)$ is the free monoid on the set $X \sqcup \lfloor \mathcal{M}_0 (X) \rfloor$, we can define $\overline{f}$ on the elements of $\mathcal{M}_1 (X)$. Suppose we are able to define $\overline{f}$ on the elements of $\mathcal{M}_{n-1} (X)$. As $\mathcal{M}_n (X)$ is the free monoid on the set $X \sqcup \lfloor \mathcal{M}_{n-1} (X) \rfloor$, we can define $\overline{f}$ on the elements of $\mathcal{M}_n (X)$. It is easy to see that the map $\overline{f} : \overline{\mathcal{M}(X)} \rightarrow A$ induces a map $\widetilde{f} : \mathcal{A}(X) \rightarrow A$. The map $\widetilde{f}$ is obviously an algebra morphism that satisfies $\widetilde{f} \circ i = f$ and $P \circ \widetilde{f} = \widetilde{f} \circ \mathcal{P}$. This shows that $(\mathcal{A}(X), \mathcal{P})$ is a free averaging algebra of weight $\lambda$ over the set $X$.


\section{Relative averaging operators of nonzero weight}\label{sec4}
In this section, we consider a notion of the relative averaging operator of weight $\lambda$ as a generalization of the averaging operator of weight $\lambda$. We give some characterizations of such operators in terms of their graphs. Moreover, given some suitable structures, we construct a differential graded Lie algebra whose Maurer-Cartan elements are precisely relative averaging operators of nonzero weight $\lambda$. Using this characterization, we also define the cohomology associated with a relative averaging operator of weight $\lambda$.

Let $A$ be an associative algebra. An associative $A$-bimodule is an associative algebra $B_\nu$ (i.e., $\nu : B \otimes B \rightarrow B, (x, y) \mapsto xy$ is a linear map satisfying the associativity) equipped with two linear maps $l: A \otimes B \rightarrow B$, $(a,x) \mapsto a \cdot x$ and $r: B \otimes A \mapsto B$, $(x,a) \mapsto x \cdot a$, satisfying for $a, b \in A$ and $x, y \in A$,
\begin{align}
(a b) \cdot x =~& a \cdot ( b \cdot x), \qquad (a \cdot x) \cdot b = a \cdot (x \cdot b), \qquad (x \cdot a) \cdot b = x \cdot (ab), \\
(x y) \cdot a =~& x (y \cdot a), \qquad (x \cdot a) y = x (a \cdot y), \qquad (a \cdot x) y = a \cdot ( x y).
\end{align}

In this case, we often say that the associative algebra $A$ acts on the algebra $B_\nu$ and $l, r$ being the left and right $A$-actions on $B_\nu$. We denote an associative $A$-bimodule as above by $B_\nu^{l, r}$ or simply by $B$ when the associative multiplication $\nu$ of $B$ and the left and right $A$-actions on $B_\nu$ are clear from the context. It follows from the above definition that any associative algebra $A$ can be realized as an associative $A$-bimodule called the adjoint $A$-bimodule.

\begin{defn}
 Let $A$ be an associative algebra and $B$ be an associative $A$-bimodule. A linear map  $P: B\rightarrow A $ is said to be a {\bf relative averaging operator of nonzero weight} $\lambda\in \mathbb{K}$ if
 \begin{align*}
 P(x) P(y) = P(P(x)\cdot y) = P(x\cdot P(y)) = \lambda P(x y), \text{ for all } x,y\in B.    
 \end{align*}
\end{defn}
It follows that any averaging operator of weight $\lambda$ on an associative algebra $A$ can be realized as a relative averaging operator of weight $\lambda$. Thus, a relative averaging operator of weight $\lambda$ is a generalization of an averaging operator of weight $\lambda$.


\begin{exam}
    Let $A$ be an associative algebra. Note that the space $B = \underbrace{A\oplus\cdots\oplus A}_{n \text{ summand}}$ can be given an associative algebra structure with the multiplication given by the componentwise multiplication of $A$. The space $B$ is an associative $A$-bimodule with the left and right $A$-actions on $B$ are respectively given by
    \begin{align}\label{left-right-b}
  a\cdot(a_1,\ldots,a_n) = (a a_1,\ldots,a a_n)~~ \text{ and } ~~  (a_1,\ldots,a_n)\cdot a = (a_1 a,\ldots,a_n a),
\end{align} 
for $a\in A$  and $(a_1,\ldots,a_n)\in B$. Then for any $1\leq i\leq n$, the $i$-th projection map $P_i : B \rightarrow A$ is a relative averaging operator of weight $1$.
\end{exam}

\begin{exam}
Let $A$ be an associative algebra. Then the space $B = \underbrace{A\oplus\cdots\oplus A}_{n \text{ summand}}$  can be equipped with an associative algebra structure with the multiplication given by
\begin{align*}
    (a_1, \ldots, a_n) (b_1, \ldots, b_n) = (a_1b_1, a_2 b_1 + a_1 b_2 + a_2b_2, \ldots, \underbrace{ \sum_{1 \leq j \leq k-1} a_k b_j + \sum_{ 1 \leq i \leq k-1} a_i b_k + a_k b_k }_{k\text{-th place}}, \ldots),
\end{align*}
for $(a_1, \ldots, a_n), (b_1, \ldots, b_n) \in B$.
Moreover, $B$ is an associative $A$-bimodule with the left and right $A$-actions on $B$ are given by (\ref{left-right-b}).
 With these notations, the usual average map $P: B\rightarrow A$, $P(a_1,\ldots,a_n) = {\frac{a_1 +\cdots + a_n}{n}}$ is a relative averaging operator of weight $1/n$.
\end{exam}


\begin{exam}
Let $A$ be an associative algebra and $B$ be an associative $A$-bimodule. Suppose $f: B\rightarrow A$ is an algebra homomorphism and an $A$-bimodule map. Then $f$ is a relative averaging operator of weight 1.   \end{exam}

\begin{exam}
Let $A$ be any associative algebra. Then the associative multiplication of $A$ induces an associative multiplication on the space $B= A[[t]]/(t^2)$ using $t$-bilinearity. Further, $B$ can be given an associative $A$-bimodule structure with the left and right $A$-actions are respectively given by
\begin{align*}
a' \cdot (a + t b) = a' a + t a'b ~~~~ \text{ and } ~~~~ (a + t b) \cdot a' = aa' + t b a',
\end{align*}
for $a' \in A$ and $a+ tb \in B$. With this, the projection map $P: B \rightarrow A$ given by $P(a+ tb) = a$ is a relative averaging operator of weight $1$.
\end{exam}

\begin{exam}
Let $V \xrightarrow{f} W$ be a $2$-term chain complex. Let $A = T(W) = \mathbb{K} \oplus W \oplus W^{\otimes 2} \oplus \cdots $ be the tensor algebra (over the vector space $W$) with the concatenation product. On the other hand, the space $B = T(W) \otimes V \otimes T(W) \otimes V \otimes T(W)$ can also be given an associative algebra structure with the multiplication
\begin{align*}
   ( w_{-m} \cdots w_{-1} \otimes u \otimes \varpi_1 \cdots \varpi_n \otimes v \otimes w_1 \cdots w_p)    ( w'_{-r} \cdots w'_{-1} \otimes u' \otimes \varpi'_1 \cdots \varpi'_s \otimes v' \otimes w'_1 \cdots w'_t) \\
   = w_{-m} \cdots w_{-1} \otimes u \otimes \varpi_1 \cdots \varpi_n f(v) w_1 \cdots w_p w'_{-r} \cdots w'_{-1} f( u') \varpi'_1 \cdots \varpi'_s \otimes v' \otimes w'_1 \cdots w'_t.
\end{align*}
Moreover, $B$ can be given an associative $A$-bimodule structure with the left and right $A$-actions on $B$ given by
\begin{align*}
  ( w'_1 \cdots w'_t)  &\cdot ( w_{-m} \cdots w_{-1} \otimes u \otimes \varpi_1 \cdots \varpi_n \otimes v \otimes w_1 \cdots w_p) \\
  &= w'_1 \cdots w'_t  w_{-m} \cdots w_{-1} \otimes u \otimes \varpi_1 \cdots \varpi_n \otimes v \otimes w_1 \cdots w_p,  \\
  ( w_{-m} \cdots w_{-1} &\otimes u \otimes \varpi_1 \cdots \varpi_n \otimes v \otimes w_1 \cdots w_p) \cdot  ( w'_1 \cdots w'_t) \\
  &=  w_{-m} \cdots w_{-1} \otimes u \otimes \varpi_1 \cdots \varpi_n \otimes v \otimes w_1 \cdots w_p  w'_1 \cdots w'_t,
\end{align*}
for $ w'_1 \cdots w'_t \in A$ and $ w_{-m} \cdots w_{-1} \otimes u \otimes \varpi_1 \cdots \varpi_n \otimes v \otimes w_1 \cdots w_p \in B$. With these notations, the map $P_f : B \rightarrow A$ defined by
\begin{align*}
    P_f (   w_{-m} \cdots w_{-1} \otimes u \otimes \varpi_1 \cdots \varpi_n \otimes v \otimes w_1 \cdots w_p   ) =  w_{-m} \cdots w_{-1} f( u)  \varpi_1 \cdots \varpi_n f( v) w_1 \cdots w_p 
\end{align*}
is a relative averaging operator of weight $1$.
\end{exam}

\begin{exam}
    (This is a generalization of the previous example) Let $A$ be an associative algebra, $V$ be any vector space and $f: V \rightarrow A$ be a linear map. Define $B= A \otimes V \otimes A \otimes V \otimes A$ with the associative multiplication
    \begin{align*}
        (a \otimes u \otimes b \otimes v \otimes c) (a' \otimes u' \otimes b' \otimes v' \otimes c') = a \otimes u \otimes b f(v) c a' f(u') b' \otimes v' \otimes c',
    \end{align*}
    for $a \otimes u \otimes b \otimes v \otimes c,~ a' \otimes u' \otimes b' \otimes v' \otimes c' \in B$. Moreover, $B$ is an associative $A$-bimodule with the left and right $A$-actions
    \begin{align*}
        a' \cdot (a \otimes u \otimes b \otimes v \otimes c) = (a' \cdot a) \otimes u \otimes b \otimes v \otimes c ~~~~ \text{ and } ~~~~ (a \otimes u \otimes b \otimes v \otimes c) \cdot a' = a \otimes u \otimes b \otimes v \otimes (c \cdot a'),
    \end{align*}
    for $a' \in A$ and $a \otimes u \otimes b \otimes v \otimes c \in B$. Then the map $P_f : B \rightarrow A$, $P_f (a \otimes u \otimes b \otimes v \otimes c) = a \cdot f(u) \cdot b \cdot f(v) \cdot c$ is a relative averaging operator of weight $1$.
\end{exam}

In the following, we give a characterization of a relative averaging operator of nonzero weight in terms of the graph of the operator. We first prove the following result.

\begin{prop}\label{l-triass}
Let $A$ be an associative algebra and $B$ be an associative $A$-bimodule. Then the direct sum $A \oplus B$ inherits a triassociative algebra structure with the operations
\begin{align*}
(a,x) \dashv (b, y) := (a  b, x \cdot b), \quad (a,x) \vdash (b, y) := (a  b,  a \cdot y ) ~~ \text{ and } ~~ (a,x) \perp (b, y) := (a  b, \lambda  x  y),
\end{align*}
for $(a,x), (b, y) \in A \oplus B.$ We denote this triassociative algebra by $A \oplus_\lambda B$.
\end{prop}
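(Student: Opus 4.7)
The plan is to unpack each of the eleven triassociative axioms (\ref{a1})--(\ref{a11}) on generic elements $X=(a,x)$, $Y=(b,y)$, $Z=(c,z)$ of $A\oplus B$ and reduce each one to either the associativity of $A$, the associativity of the multiplication $\nu$ on $B$, or one of the six bimodule axioms relating $A$, $l$, $r$, and $\nu$. The crucial structural observation is that the $A$-component of every product $X\star Y$ (with $\star\in\{\dashv,\vdash,\perp\}$) is simply the associative product $ab$. Hence for every one of the eleven axioms, both sides yield $abc$ in the $A$-component, and the only content is the identity that results in the $B$-component.

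I would then dispatch the axioms by grouping them according to which bimodule identity they invoke. The axioms (\ref{a1}), (\ref{a2}), (\ref{a6}) all reduce, in the $B$-slot, to $(x\cdot b)\cdot c=x\cdot(bc)$, which is the bimodule axiom $(x\cdot a)\cdot b=x\cdot(ab)$. Axiom (\ref{a3}) reduces to $(a\cdot y)\cdot c=a\cdot(y\cdot c)$, the middle bimodule axiom on $l,r$. Axioms (\ref{a4}), (\ref{a5}), (\ref{a10}) all produce $(ab)\cdot z$ vs.\ $a\cdot(b\cdot z)$, handled by $(ab)\cdot x=a\cdot(b\cdot x)$. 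Axiom (\ref{a7}) becomes $(xy)\cdot c=x(y\cdot c)$; (\ref{a8}) becomes $(x\cdot b)y=x(b\cdot y)$ (after cancelling the factor $\lambda$ on both sides); and (\ref{a9}) becomes $(a\cdot y)z=a\cdot(yz)$. These three are exactly the compatibilities between $\nu$ and the two $A$-actions on $B$. Finally (\ref{a11}) yields $\lambda^2(xy)z=\lambda^2 x(yz)$, which is associativity of $\nu$.

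The main obstacle is not conceptual but bookkeeping: one must carefully expand each of the eleven bracketings, track the $\lambda$-factors where $\perp$ appears (a single $\lambda$ for each occurrence of $\perp$, so axiom (\ref{a11}) picks up $\lambda^2$ on both sides while the mixed axioms (\ref{a7})--(\ref{a10}) carry a single $\lambda$ that cancels from the two sides), and match to the correct one of the six bimodule identities listed in $(7)$--$(8)$. Since every axiom reduces cleanly to one of those identities, no new hypothesis is needed and the proof is complete once the eleven verifications are recorded.
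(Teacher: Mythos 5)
Your proposal is correct and takes essentially the same approach as the paper's proof: a direct verification of all eleven axioms, observing that the $A$-component is always $abc$ and reducing each $B$-component identity to associativity of $\nu$ or one of the six bimodule axioms, with the $\lambda$-factors tracked exactly as you describe. The paper simply writes out fewer of the eleven cases explicitly, invoking "similarly" for the rest.
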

\begin{proof}
 For any  $(a,x), (b, y), (c,z)\in A \oplus B$, we have 
 \begin{align*}
 ((a,x) \dashv (b, y)) \dashv (c,z) =~& (a b,x\cdot b)\dashv (c,z) = ((a b) c, (x\cdot b)\cdot c), \\
(a,x) \dashv ((b, y) \dashv (c,z)) =~& (a,x) \dashv (bc,y\cdot c) = (a( b c), x\cdot (b c)),\\
(a,x) \dashv ((b, y) \vdash (c,z)) =~& (a,x) \dashv (bc,b\cdot z) = (a( b c), x\cdot (b c)),\\
(a,x) \dashv ((b, y) \perp (c,z)) =~& (a,x) \dashv (bc, \lambda y z) = (a( bc), x\cdot (b c)).
\end{align*}
Since $A$ is associative algebra and $B$ is an associative $A$ bimodule, we have
\begin{align*}
 ((a,x) \dashv (b, y)) \dashv (c,z) =
(a,x) \dashv ((b, y) \dashv (c,z)) = (a,x) \dashv ((b, y) \vdash (c,z) ) = (a,x) \dashv ((b, y) \perp (c,z)).
\end{align*}
Similarly, we can show that 
\begin{align*}
(a,x) \vdash ((b, y) \vdash (c,z) ) =
((a,x) \vdash (b, y) ) \vdash (c,z) ) = ((a,x) \dashv (b, y) ) \vdash (c,z) = ((a,x) \perp (b, y)) \vdash (c,z).
\end{align*}
Moreover, we have
\begin{align*}
    ((a,x) \vdash (b, y)) \dashv (c,z) = ((a b) c, (a\cdot y)\cdot c)
= (a( b c), a\cdot (y\cdot c)) = (a,x) \vdash ( (b, y) \dashv (c,z)),\\
  ((a,x) \perp (b, y)) \dashv (c,z) = ((a b) c, \lambda (x y) \cdot c )
= (a( b c),\lambda x (y\cdot c) ) = (a,x) \perp ( (b, y) \dashv (c,z)).
\end{align*}
Similarly, one can show that
\begin{align*}
  ((a,x) \dashv (b, y)) \perp (c,z) =~& (a,x) \perp ( (b, y) \vdash (c,z)), \\
  ((a,x) \vdash (b, y)) \perp (c,z) =~&   (a,x) \vdash ( (b, y) \perp (c,z)), \\
  ((a,x) \perp (b, y)) \perp (c,z) =~& (a,x) \perp ((b, y) \perp (c,z)).
\end{align*}
Hence the result follows.
\end{proof}

\begin{remark}
The converse of the above proposition also holds true. More precisely, suppose $A$ and $B$ are two vector spaces equipped with the linear maps
\begin{align*}
    \mu : A \otimes A \rightarrow A, \quad \nu : B \otimes B \rightarrow B, \quad l: A \otimes B \rightarrow B ~~~ \text{ and } ~~~~ r : B \otimes A \rightarrow A. 
\end{align*}
Then we may define three maps $\dashv, \vdash, \perp : (A \oplus B) \otimes (A \oplus B) \rightarrow (A \oplus B)$ by
\begin{align*}
    (a,x) \dashv (b, y) = ( \mu (a, b) , r(x, b)), ~~~~ (a, x) \vdash (b,y) = ( \mu (a, b), l(a, y)) ~~ \text{ and } ~~ (a,x) \perp (b, y) = ( \mu (a, b), \lambda \nu (x, y)),
\end{align*}
for $(a, x), (b, y) \in A \oplus B$.
With these notations, the maps $\dashv, \vdash, \perp$ make $A \oplus B$ into a triassociative algebra if and only if $A_\mu$ is an associative algebra and $B^{l, r}_\nu$ is an associative $A_\mu$-bimodule.
\end{remark}

\begin{prop}\label{gr}
Let $A$ be an associative algebra and $B$ be an associative $A$-bimodule. Then a linear map $P: B \rightarrow A$ is a relative averaging operator of weight $\lambda$ if and only if the graph $Gr (P) = \{ ( P(x), x) |~ x \in B \}$ is a subalgebra of the triassociative algebra $A \oplus_\lambda B$. 
\end{prop}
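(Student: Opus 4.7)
The plan is to prove this by a direct unpacking of the subalgebra condition. Since the graph $\mathrm{Gr}(P) \subset A \oplus B$ consists of pairs whose first component is determined by the second, being a subalgebra with respect to each of the three operations $\dashv, \vdash, \perp$ amounts to the first-component output matching $P$ applied to the second-component output.

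More precisely, for arbitrary $x, y \in B$, I would compute the three products of $(P(x), x)$ and $(P(y), y)$ using the formulas from Proposition \ref{l-triass}:
\begin{align*}
(P(x), x) \dashv (P(y), y) =~& (P(x) P(y),\, x \cdot P(y)), \\
(P(x), x) \vdash (P(y), y) =~& (P(x) P(y),\, P(x) \cdot y), \\
(P(x), x) \perp (P(y), y) =~& (P(x) P(y),\, \lambda\, x y).
\end{align*}
Closure of $\mathrm{Gr}(P)$ under these three operations is then equivalent to the three identities
$$P(x \cdot P(y)) = P(x) P(y), \qquad P(P(x) \cdot y) = P(x) P(y), \qquad \lambda P(xy) = P(x) P(y),$$
holding for all $x, y \in B$, which is precisely the defining condition of a relative averaging operator of weight $\lambda$.

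There is no real obstacle; the content of the proposition is that the three closure conditions for the three triassociative operations on $\mathrm{Gr}(P)$ line up exactly with the three equalities $P(x) P(y) = P(P(x) \cdot y) = P(x \cdot P(y)) = \lambda P(xy)$. I would present both implications together by noting that each closure condition is equivalent (via the injectivity of the assignment $x \mapsto (P(x), x)$ from $B$ to $\mathrm{Gr}(P)$) to one of the three identities, and all three identities taken together constitute the definition.
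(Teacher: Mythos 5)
Your proposal is correct and follows essentially the same route as the paper: compute the three products of $(P(x),x)$ and $(P(y),y)$ in $A \oplus_\lambda B$ and observe that closure of the graph under each operation is equivalent to one of the three defining identities of a relative averaging operator of weight $\lambda$. No gaps; the argument is complete as stated.
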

\begin{proof}
 For any $x,y\in B$, we have \begin{align*}
 (P(x),x)\dashv (P(y),y) =~& (P(x) P(y),x\cdot P(y)), \quad
  (P(x),x)\vdash (P(y),y) = (P(x) P(y),P(x)\cdot y) \\
  & \text{ and } ~ (P(x),x) \perp (P(y),y) = (P(x) P(y),\lambda x y). 
 \end{align*}
This shows that $Gr (P)$ is a subalgebra of the triassociative algebra $A \oplus_\lambda B$ if and only if $P(x) P(y) = P(P(x)\cdot y) = P(x\cdot P(y)) = \lambda P(x y)$, that is, $P$ is a relative averaging operator of weight $\lambda$.
\end{proof}
Let $(D, \dashv, \vdash, \perp)$ be a triassociative algebra. A linear operator $\mathcal{N} : D \rightarrow D$ is said to be a \textbf{Nijenhuis operator} on the triassociative algebra if for all $x,y\in D$ and $\star =~ \dashv, \vdash, \perp$, we have
\begin{align*}
 \mathcal{N} (x)\star \mathcal{N}(y) = \mathcal{N} (\mathcal{N}(x) \star y + x \star \mathcal{N}(y) - \mathcal{N}(x \star y)\big).
\end{align*}

\begin{prop}
Let $A$ be an associative algebra and $B$ be an associative $A$-bimodule. A linear map $P: B \rightarrow A$ is a relative averaging operator of weight $\lambda$ if and only if the map 
\begin{align*}
\mathcal{N}_P : A \oplus B \rightarrow A \oplus B, ~~ \mathcal{N}_P (a, x) = \big( P(x), 0 \big)
\end{align*}
is a Nijenhuis operator on the triassociative algebra $A \oplus_\lambda B$.
\end{prop}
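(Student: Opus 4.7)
The plan is a direct case-by-case verification across the three triassociative products of $A\oplus_\lambda B$. Fix arbitrary $U=(a,\xi),\,V=(b,\eta)\in A\oplus B$, and for each $\star\in\{\dashv,\vdash,\perp\}$ expand both sides of the Nijenhuis identity
\begin{equation*}
\mathcal{N}_P(U)\star \mathcal{N}_P(V) \;=\; \mathcal{N}_P\bigl(\mathcal{N}_P(U)\star V + U\star \mathcal{N}_P(V) - \mathcal{N}_P(U\star V)\bigr)
\end{equation*}
using the explicit product formulas from Proposition~\ref{l-triass} together with $\mathcal{N}_P(a,x)=(P(x),0)$, and simplify to read off an equality in $A$ (the identity in $B$ will be automatic from the $A$-bimodule axioms).

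Two structural observations make each case a short calculation. First, $\mathcal{N}_P$ depends only on the $B$-component of its input and always returns an element with vanishing $B$-component; in particular $\mathcal{N}_P^2=0$, so the outer $\mathcal{N}_P$ on the right-hand side is sensitive only to the second slot of its argument. Second, the three $B$-slot formulas in the products of $A\oplus_\lambda B$, namely $\xi\mapsto \xi\cdot P(\eta)$ (for $\dashv$), $\eta\mapsto P(\xi)\cdot\eta$ (for $\vdash$), and $(\xi,\eta)\mapsto\lambda\,\xi\eta$ (for $\perp$), are tailored exactly to match the three averaging expressions $P(\xi\cdot P(\eta))$, $P(P(\xi)\cdot\eta)$, and $\lambda P(\xi\eta)$.

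Carrying out the three expansions: the $\dashv$-case collapses to $P(\xi)P(\eta)=P(\xi\cdot P(\eta))$, the $\vdash$-case to $P(\xi)P(\eta)=P(P(\xi)\cdot\eta)$, and the $\perp$-case to $P(\xi)P(\eta)=\lambda\,P(\xi\eta)$. These are precisely the three defining relations of a relative averaging operator of weight $\lambda$, and conversely the RAO relations force each of the three Nijenhuis identities; so the equivalence is immediate from the three matched pairs.

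There is no genuine obstacle: the argument is a patient bookkeeping exercise once one writes the three products side by side. The only step that requires a bit of attention is matching the factor $\lambda$ that arises from the $\perp$-term $\mathcal{N}_P(U\perp V)$ with the weight $\lambda$ on the right-hand side of the averaging axiom $P(\xi)P(\eta)=\lambda P(\xi\eta)$; this is exactly why $\lambda$ is built into the definition of $A\oplus_\lambda B$. The whole proof mirrors, in a Nijenhuis-operator form, the graph characterisation in Proposition~\ref{gr}.
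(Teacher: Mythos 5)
Your overall strategy (expand the Nijenhuis identity separately for $\dashv$, $\vdash$, $\perp$ and read off the three averaging relations) is exactly the paper's, and the $\dashv$ and $\vdash$ cases do collapse to $P(\xi)P(\eta)=P(\xi\cdot P(\eta))$ and $P(\xi)P(\eta)=P(P(\xi)\cdot \eta)$ as you claim. The problem is the $\perp$ case, and your own two structural observations expose it. Since $\mathcal{N}_P$ always outputs $0$ in the $B$-slot and the outer $\mathcal{N}_P$ reads only the $B$-slot of its argument, the right-hand side of the $\perp$ identity is
\begin{align*}
\mathcal{N}_P\Big( \big(P(\xi)b,\ \lambda\,\nu(0,\eta)\big) + \big(aP(\eta),\ \lambda\,\nu(\xi,0)\big) - \big(\lambda P(\xi\eta),\ 0\big) \Big) \;=\; \mathcal{N}_P\big(\,\cdot\,,\ 0\big) \;=\; (0,0),
\end{align*}
because $\nu(0,\eta)=\nu(\xi,0)=0$. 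The term $\mathcal{N}_P(U\perp V)=(\lambda P(\xi\eta),0)$, which you rely on to produce the weight, sits entirely in the $A$-slot and is therefore annihilated by the outer $\mathcal{N}_P$ (this is just your own remark that $\mathcal{N}_P^2=0$). So the $\perp$-case of the Nijenhuis identity reads $P(\xi)P(\eta)=0$, not $P(\xi)P(\eta)=\lambda P(\xi\eta)$, and the ``matching the factor $\lambda$'' step you flag as the only delicate point cannot actually be carried out.

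You are in good company: the paper's printed proof asserts the same value $\lambda P(xy)$ for the $\perp$ case, so you have reproduced its computation faithfully --- but that is precisely the computation that fails. With $\mathcal{N}_P(a,x)=(P(x),0)$ the stated equivalence is false for $\lambda\neq 0$ unless $P(\xi)P(\eta)$ vanishes identically; to recover the weight term one must give $\mathcal{N}_P$ a nonzero $B$-component (for instance $(a,x)\mapsto (P(x),x)$ yields $P(\xi)P(\eta)=\lambda P(\xi\eta)$ in the $\perp$ slot while leaving the $\dashv$ and $\vdash$ cases intact). As written, your proposal contains a step that does not hold, so the argument does not close.
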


\begin{proof}
  For any $(a,x), (b,y) \in A \oplus B$ and $\star = ~ \dashv, \vdash, \perp$, we have 
\begin{align*}
    \mathcal{N}_P (a, x) \star \mathcal{N}_P (b, y) = (P(x) , 0) \star (P(y), 0) = ( P(x) P(y), 0).
\end{align*}
On the other hand, by a straightforward calculation, we get that
\begin{align*}
    \mathcal{N}_P \big(   \mathcal{N}_P (a,x) \star (b, y) + (a,x) \star \mathcal{N}_P (b, y) - \mathcal{N}_P (   (a,x) \star (b, y) ) \big) = \begin{cases}
        P (x \cdot P(y)) & \text{ if } \star =~ \dashv,\\
        P (P(x) \cdot y) & \text{ if } \star =~ \vdash,\\
        \lambda P(xy) & \text{ if } \star =~\perp.
    \end{cases}
\end{align*}
Hence the result follows.
\end{proof}


\subsection{Maurer-Cartan characterization and cohomology of relative averaging operators of any nonzero weight}

Let $A$ be an associative algebra and $B$ be an associative $A$-bimodule. Given the above data, here we construct a differential graded Lie algebra (dgLa) whose Maurer-Cartan elements correspond to relative averaging operators of weight $\lambda$. Subsequently, we define the cohomology associated with a relative averaging operator $P: B\rightarrow A$ of weight $\lambda$. First, we consider the graded Lie algebra $\mathfrak{g} = (\oplus_{n=0}^{\infty} {C}^{n+1}(A\oplus B,A\oplus B),{[~,~]} )$ associated with the direct sum vector space $A\oplus B$. Then it is easy to see that the graded subspace $\mathfrak{a} = \oplus_{n=0}^{\infty} {C}^{n+1}( B,A)$ is an abelian Lie subalgebra of $\mathfrak g$. We also define an element $\pi_\lambda \in {C}^{2}(A\oplus B,A\oplus B)$ by 
\begin{align*}
 \pi_\lambda \big(\begin{tikzpicture}[scale=0.10]
\draw (0,0) -- (0,-2); \draw (0,0) -- (-3,3);  \draw (0,0) -- (3,3); \draw (1.5,1.5) -- (0,3);
\end{tikzpicture} ;(a,x),&(b,y)\big) = (a b,x \cdot b), \quad \pi_\lambda \big(\begin{tikzpicture}[scale=0.10]
 \draw (0,0) -- (0,-2); \draw (0,0) -- (-3,3);  \draw (0,0) -- (3,3); \draw (-1.5,1.5) -- (0,3);
\end{tikzpicture};(a,x),(b,y)\big) = (a b,a\cdot y) \\
&\text{ and } ~  \pi_\lambda \big(\begin{tikzpicture}[scale=0.10]
 \draw (0,0) -- (0,-2); \draw (0,0) -- (-3,3);  \draw (0,0) -- (3,3); \draw (0,0) -- (0,3);
\end{tikzpicture};(a,x),(b,y)\big) = (a b, \lambda x y), 
\end{align*}
for $(a,x),(b,y)\in A\oplus B$. Then it follows from Proposition \ref{l-triass} that ${[\pi_\lambda,\pi_\lambda]} = 0$.
Therefore, the suspended graded vector space $s \mathfrak{a} = \displaystyle \oplus_{n=1}^{\infty} {C}^{n}( B,A)$ inherits a graded Lie algebra structure with the derived bracket (see \cite{yks})
\begin{align}\label{deri-bracket}
 \llbracket {f},{g}\rrbracket := (-1)^m {[{[\pi_\lambda ,f]},g]},   
\end{align} 
for $f \in C^m(B,A)$ and $g\in C^n(B,A)$. 
On the other hand, $- \pi_\lambda$ is a Maurer-Cartan element of the graded Lie algebra $\mathfrak{g}$ implies that $- \pi_\lambda$ induces a differential 
\begin{align*}
d_{- \pi_\lambda} := - [\pi_\lambda , - ] : C^n(A \oplus B, A \oplus B) \rightarrow C^{n+1} (A \oplus B, A \oplus B), \text{ for } n \geq 1.
\end{align*}
It can be checked that the graded subspace $\displaystyle \oplus_{n=1}^{\infty} {C}^{n}(B,A)$ is closed under the differential $d_{-\pi_\lambda }$. We denote the restriction of the differential $d_{-\pi_\lambda }$ to the subspace $\displaystyle \oplus_{n=1}^{\infty} {C}^{n}(B,A)$ simply by $d$. The map $d:{C}^{n}(B,A)\rightarrow {C}^{n+1}( B,A)$ is explicitly given by

\begin{align}\label{dgla-d-formula}
 (df)(T;x_1,\ldots,x_{n+1}) = (-1)^{(n-1)}\displaystyle \sum_{i=1}^{n} (-1)^{(i-1)} f \big( d_iT;x_1,\ldots,x_{i-1}, \pi_\lambda \big(R^{n;i,2}_i (T) ; x_i,  x_{i+1} \big) ,\ldots,x_{n+1} \big),
\end{align}
for $T\in T_{n+1}$ and $x_1,\ldots,x_{n+1}\in B$. Then $d^2=0$. Moreover, we have the following result.

\begin{prop}
 Let $A$ be an associative algebra and $B$ be an associative $A$-bimodule. Then the triple $(\displaystyle \oplus_{n=1}^{\infty} {C}^{n}( B,A),\llbracket ~,~\rrbracket,d)$ is a differential graded Lie algebra.   
\end{prop}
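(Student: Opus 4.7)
The plan is to realize this dgLa as a special case of the derived bracket construction of Kosmann-Schwarzbach and Voronov. All the ingredients are already assembled in the text: the ambient graded Lie algebra $(\mathfrak{g}, [-,-])$, the abelian graded subspace $\mathfrak{a} = \oplus_{n \geq 1} C^n(B, A)$, and the element $\pi_\lambda \in \mathfrak{g}$ that is Maurer-Cartan, i.e., $[\pi_\lambda, \pi_\lambda] = 0$, by Proposition \ref{l-triass}. Given these pieces, the dgLa axioms will follow formally from the graded Jacobi identity in $\mathfrak{g}$ together with the Maurer-Cartan equation, once we verify that $\mathfrak{a}$ is closed under both $d$ and $\llbracket -, -\rrbracket$.

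First I would verify that $\mathfrak{a}$ is an abelian subalgebra of $\mathfrak{g}$. Identifying $C^n(B, A)$ with its image in $C^n(A \oplus B, A \oplus B)$ via the natural extension $\tilde{f}$ that reads only the $B$-components of its inputs and outputs purely in the $A$-factor, any composition $\tilde{f} \circ_i \tilde{g}$ feeds $\tilde{g}$'s output (which lies in $A$) into the $i$-th slot of $\tilde{f}$; since $\tilde{f}$ sees only the $B$-component, that slot contributes $0$, so $\tilde{f} \circ_i \tilde{g} = 0$ by multilinearity, whence $[\tilde{f}, \tilde{g}] = 0$.

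Next I would verify that $d = -[\pi_\lambda, -]$ preserves $\mathfrak{a}$ and confirm the explicit formula (\ref{dgla-d-formula}). Expanding $[\pi_\lambda, \tilde{f}]$, each term of the form $\pi_\lambda \circ_i \tilde{f}$ has its $A$-component built out of the $A$-components of the external inputs via the associative product on $A$, and these contributions vanish on pure $B$-inputs. The surviving terms $\tilde{f} \circ_i \pi_\lambda$ feed the $B$-component of $\pi_\lambda((0, x_i), (0, x_{i+1}))$ into the $i$-th slot of $\tilde{f}$; this $B$-component equals $\lambda x_i x_{i+1}$ when the corresponding subtree is $\perp$ and vanishes for the $\dashv$ and $\vdash$ subtrees, which recovers formula (\ref{dgla-d-formula}). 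A parallel accounting on the double bracket $[[\pi_\lambda, \tilde{f}], \tilde{g}]$ shows $\llbracket \tilde{f}, \tilde{g} \rrbracket \in \mathfrak{a}$.

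Finally I would assemble the three dgLa axioms. The relation $d^2 = 0$ follows immediately from graded Jacobi and $[\pi_\lambda, \pi_\lambda] = 0$, since $d^2 f = [\pi_\lambda, [\pi_\lambda, f]] = \tfrac{1}{2}[[\pi_\lambda, \pi_\lambda], f] = 0$ in $\mathfrak{g}$. The graded antisymmetry and graded Jacobi identity for $\llbracket -, -\rrbracket$ on $\mathfrak{a}$ follow from the Kosmann-Schwarzbach theorem on derived brackets: the derived operation defines a graded Loday bracket on all of $\mathfrak{g}$ (by virtue of the Maurer-Cartan equation) and becomes a genuine graded Lie bracket precisely on any abelian subspace. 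The compatibility $d\llbracket f, g\rrbracket = \llbracket df, g\rrbracket + (-1)^{|f|}\llbracket f, dg\rrbracket$ likewise reduces, by direct expansion, to graded Jacobi combined with $[\pi_\lambda, \pi_\lambda] = 0$. The chief technical obstacle will be the component-tracking in the closure step above—carefully accounting for how the $A$- and $B$-components propagate through the $\circ_i$ operations to secure genuine closure of $\mathfrak{a}$; once this is in hand, the dgLa axioms become essentially formal consequences of the derived bracket formalism.
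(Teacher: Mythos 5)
Your proposal is correct and follows essentially the same route as the paper: the paper's own proof consists only of the Leibniz-rule computation $d\llbracket f,g\rrbracket=\llbracket df,g\rrbracket+(-1)^m\llbracket f,dg\rrbracket$ via the graded Jacobi identity, with the abelian subalgebra, the derived bracket from $[\pi_\lambda,\pi_\lambda]=0$ (citing Kosmann--Schwarzbach), and $d^2=0$ all set up in the preceding paragraphs exactly as you describe. One small caution on your closure step: on pure $B$-inputs the terms $\pi_\lambda\circ_i \tilde f$ have vanishing $A$-component but not necessarily vanishing $B$-component (e.g.\ the $\vdash$-tree produces $(0,f(\ldots)\cdot x)$), so the clean formulation of closure really passes through the projection $p$ onto $\mathfrak a$ as in Voronov's $V$-data --- the same imprecision the paper itself leaves at ``it can be checked.''
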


\begin{proof}
    For any $f \in C^m(B,A)$ and $g \in C^n (B,A)$, we have
    \begin{align*}
 d \llbracket f,g \rrbracket & = (-1)^m d{[{[\pi_\lambda ,f]},g]}\\
 &= (-1)^{m-1} {[\pi_\lambda ,{[{[\pi_\lambda ,f]},g]}]}\\
 &= (-1)^{m-1} {[{[\pi_\lambda ,{[\pi_\lambda ,f]}]},g]} + (-1)^{m-1}(-1)^m{[{[\pi_\lambda ,f]},{[\pi_\lambda,g]}]}\\
 &= (-1)^{m-1}(-1){[{[\pi_\lambda ,{[\pi_\lambda,f]}]},g]} + {[{[\pi_\lambda ,f]},dg]}\\
 &= (-1)^{m-1}{[{[\pi_\lambda ,df]},g]} + {[{[\pi_\lambda ,f]},dg]}\\
 &= \llbracket df,g\rrbracket + (-1)^m \llbracket f,dg \rrbracket .
\end{align*} 
Hence the result follows.
\end{proof}



In the following result, we show that the above differential graded Lie algebra characterizes relative averaging operators of weight $\lambda$ as its Maurer-Cartan elements.

\begin{thm}\label{dgla-thmm}
 Let $A$ be an associative algebra and $B$ be an associative $A$-bimodule. A linear map $P: B\rightarrow A$ is a relative averaging operator of weight $\lambda$ if and only if  $P\in C^1(B,A) = \mathrm{Hom}(\mathbb{K}[T_1]\otimes B, A)$ is a Maurer-Cartan element of the differential graded Lie algebra $(\displaystyle \oplus_{n=1}^{\infty} {C}^{n}( B,A),\llbracket ~,~\rrbracket,d ).$
 \end{thm}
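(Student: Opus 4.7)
My plan is to unpack the Maurer-Cartan equation $dP + \tfrac{1}{2}\llbracket P, P\rrbracket = 0$ for $P \in C^1(B,A)$ into three scalar identities, one for each tree in $T_2 = \{T_\dashv, T_\vdash, T_\perp\}$, and to match those identities with the three defining conditions of a relative averaging operator of weight $\lambda$.

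The conceptual route uses the derived-bracket machinery already set up in Section \ref{sec4}. The ambient graded Lie algebra $\mathfrak{g}=\bigl(\oplus_{n\ge 0} C^{n+1}(A\oplus B,A\oplus B),[~,~]\bigr)$ contains $\pi_\lambda$ as a Maurer-Cartan element (Proposition \ref{l-triass}), and the subspace $\mathfrak{a}=\oplus_{n\ge 0} C^{n+1}(B,A)$, embedded in $\mathfrak{g}$ by extending a map $B^{\otimes n}\to A$ by zero on the other components, is an abelian subalgebra. Both the differential $d=-[\pi_\lambda,-]|_{\mathfrak{a}}$ and the derived bracket $\llbracket -,-\rrbracket$ are obtained from this data via Voronov's construction (see \cite{yks}). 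By a standard verification, a degree-one element $P\in\mathfrak{a}$ is Maurer-Cartan in the derived dgLa if and only if the perturbed element $\pi_\lambda+\widehat{P}\in\mathfrak{g}$ still squares to zero in the $[~,~]$-bracket, i.e.\ defines a triassociative algebra structure on $A\oplus B$. It therefore suffices to show that the triassociative structure $\pi_\lambda+\widehat{P}$ arises from $P$ being a relative averaging operator of weight $\lambda$; this I would do by checking that $\pi_\lambda+\widehat{P}$ coincides with the pullback of $\pi_\lambda$ along the linear isomorphism $\Phi:A\oplus B\to A\oplus B$, $(a,x)\mapsto(a+P(x),x)$, so that the squared-to-zero condition transfers to the requirement that $\Phi(\{0\}\oplus B)=\operatorname{Gr}(P)$ be closed under $\dashv,\vdash,\perp$. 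By Proposition \ref{gr}, this closure condition is precisely the relative averaging identities of weight $\lambda$.

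Alternatively, a direct tree-by-tree evaluation works. Using formulas (\ref{tri-circ}) and (\ref{dgla-d-formula}), one computes $(dP)(T_\perp;x,y)=\lambda P(xy)$ while $(dP)(T_\dashv;x,y)=(dP)(T_\vdash;x,y)=0$, and then computes $\llbracket P,P\rrbracket(T;x,y)=-[[\pi_\lambda,P],P](T;x,y)$ to obtain the pieces $-2P(x)P(y)$, $-2P(P(x)\cdot y)$ (up to an overall sign from the $x\cdot P(y)$/$P(x)\cdot y$ contributions) distributed across the three trees. Summing $dP$ and $\tfrac{1}{2}\llbracket P,P\rrbracket$ tree by tree reduces the vanishing statement to the three scalar identities $P(x)P(y)=P(x\cdot P(y))$ (on $T_\dashv$), $P(x)P(y)=P(P(x)\cdot y)$ (on $T_\vdash$), and $P(x)P(y)=\lambda P(xy)$ (on $T_\perp$).

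The main obstacle is the bookkeeping: in the derived bracket $\llbracket P,P\rrbracket=(-1)[[\pi_\lambda,P],P]$ the inner bracket $[\pi_\lambda,P]$ leaves $\mathfrak{a}$, so one must track how the components living outside $\mathfrak{a}$ re-enter after composing with $P$ a second time, and carefully distribute the resulting terms across the three trees in $T_2$. The other subtle point is verifying the claim that $\widehat{P}$ genuinely sits in the abelian subalgebra so that quadratic contributions come only from the derived bracket and not from $[\widehat{P},\widehat{P}]$; this follows from the fact that the image of any $f\in\mathfrak{a}$ lies in $A\subset A\oplus B$ while $f$ expects inputs from $B\subset A\oplus B$, so any self-composition $f\circ_i g$ with $f,g\in\mathfrak{a}$ vanishes identically.
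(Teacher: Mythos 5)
Your second, tree-by-tree route is exactly the paper's proof: the authors evaluate $\llbracket P,P\rrbracket$ and $dP$ on the three trees of $T_2$, obtaining $\llbracket P,P\rrbracket = 2(P(x\cdot P(y))-P(x)P(y))$, $2(P(P(x)\cdot y)-P(x)P(y))$, $-2P(x)P(y)$ on the left, right and middle trees respectively, and $dP=0,0,\lambda P(xy)$, so that $dP+\tfrac12\llbracket P,P\rrbracket=0$ is precisely the three weight-$\lambda$ identities; your stated conclusions match this, and your observation that $\mathfrak a$ is abelian (so no $[\widehat P,\widehat P]$ contribution) is also how the paper argues. However, your ``conceptual route'' is not correct as formulated: the claim that $P$ is Maurer--Cartan in the derived dgLa if and only if $\pi_\lambda+\widehat P$ squares to zero fails, first because $\pi_\lambda\in C^2$ and $\widehat P\in C^1$ have different degrees so their sum is not a $2$-cochain and cannot be ``the pullback of $\pi_\lambda$ along $\Phi$'', and second because $[\pi_\lambda+\widehat P,\pi_\lambda+\widehat P]=2[\pi_\lambda,\widehat P]$ (the terms $[\pi_\lambda,\pi_\lambda]$ and $[\widehat P,\widehat P]$ both vanish), which is the condition $dP=0$ alone rather than the full Maurer--Cartan equation containing the quadratic term $\tfrac12[[\pi_\lambda,P],P]$. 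The correct conceptual statement is that $P$ is Maurer--Cartan if and only if the projection onto $\mathfrak a$ of $e^{[-,P]}\pi_\lambda$ vanishes, which is equivalent to the graph $Gr(P)=\Phi(\{0\}\oplus B)$ being closed under the three operations, and this does recover Proposition \ref{gr}; if you want to keep that route you must replace ``squares to zero'' by this projection condition. Since your direct computation stands on its own, the proof is fine once the flawed conceptual detour is either repaired or dropped.
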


\begin{proof}
For any $P \in C^1 (B, A)$, it follows from (\ref{deri-bracket}) that
\begin{align*}
    \llbracket P, P \rrbracket \big(\begin{tikzpicture}[scale=0.10]
\draw (0,0) -- (0,-2); \draw (0,0) -- (-3,3);  \draw (0,0) -- (3,3); \draw (1.5,1.5) -- (0,3);
\end{tikzpicture} ; x, y \big) =~&  2 \big(  P(x \cdot P(y)) - P(x) P(y)  \big),  \\
     \llbracket P, P \rrbracket  \big(\begin{tikzpicture}[scale=0.10]
 \draw (0,0) -- (0,-2); \draw (0,0) -- (-3,3);  \draw (0,0) -- (3,3); \draw (-1.5,1.5) -- (0,3);
\end{tikzpicture}; x, y \big) =~&  2 \big(  P(P(x) \cdot y) - P(x) P(y)   \big), \\
      \llbracket P, P \rrbracket  \big(\begin{tikzpicture}[scale=0.10]
 \draw (0,0) -- (0,-2); \draw (0,0) -- (-3,3);  \draw (0,0) -- (3,3); \draw (0,0) -- (0,3);
\end{tikzpicture}; x, y \big) =~& 2 \big(  - P(x) P(y) \big).
\end{align*}
On the other hand, it follows from (\ref{dgla-d-formula}) that
\begin{align*}
   ( d P) \big(\begin{tikzpicture}[scale=0.10]
\draw (0,0) -- (0,-2); \draw (0,0) -- (-3,3);  \draw (0,0) -- (3,3); \draw (1.5,1.5) -- (0,3);
\end{tikzpicture} ; x, y \big) = (dP) \big(\begin{tikzpicture}[scale=0.10]
 \draw (0,0) -- (0,-2); \draw (0,0) -- (-3,3);  \draw (0,0) -- (3,3); \draw (-1.5,1.5) -- (0,3);
\end{tikzpicture}; x, y \big) = 0 \quad \text{ and } \quad (dP) \big(\begin{tikzpicture}[scale=0.10]
 \draw (0,0) -- (0,-2); \draw (0,0) -- (-3,3);  \draw (0,0) -- (3,3); \draw (0,0) -- (0,3);
\end{tikzpicture}; x, y \big) = \lambda P(xy).
\end{align*}
This shows that $dP + \frac{1}{2} \llbracket P, P \rrbracket = 0$ if and only if $P$ is a relative averaging operator of weight $\lambda$.
\end{proof}

Let $ P: B \rightarrow A$ be a relative averaging operator of weight $\lambda$ (i.e. $dP+\dfrac{1}{2}\llbracket P,P\rrbracket = 0$). We define a map $d_P:{C}^{n}( B,A)\rightarrow {C}^{n+1}( B,A)$ by $d_P(f) = df + \llbracket P,f\rrbracket$, for $f\in C^n(B,A)$.
Then we have the following.

\begin{prop}
The map $d_P$ is a differential, that is, $(d_P)^2 = 0$.
\end{prop}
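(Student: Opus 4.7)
The plan is to verify $(d_P)^2 = 0$ by the standard twisting computation in a differential graded Lie algebra, using four ingredients: the identity $d^2 = 0$ (which we already know), the derivation property of $d$ with respect to $\llbracket \,,\,\rrbracket$ (established in the previous proposition), the graded Jacobi identity for $\llbracket \,,\,\rrbracket$, and the Maurer-Cartan equation $dP + \tfrac{1}{2}\llbracket P,P\rrbracket = 0$ that characterizes $P$ as a relative averaging operator of weight $\lambda$ (Theorem \ref{dgla-thmm}).

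Concretely, for any $f \in C^n(B,A)$, I would first expand
\begin{align*}
(d_P)^2 f \;=\; d_P\bigl(df + \llbracket P,f\rrbracket\bigr) \;=\; d^2 f + d\llbracket P,f\rrbracket + \llbracket P, df\rrbracket + \llbracket P,\llbracket P, f\rrbracket\rrbracket.
\end{align*}
The first term vanishes. For the second term, I invoke the derivation property of $d$; since $P \in C^1(B,A)$ sits in degree $1$ for the graded Lie algebra structure on $\oplus_{n\geq 1} C^n(B,A)$, this yields $d\llbracket P,f\rrbracket = \llbracket dP, f\rrbracket - \llbracket P, df\rrbracket$. Thus the middle two terms cancel and we are left with
\begin{align*}
(d_P)^2 f \;=\; \llbracket dP, f\rrbracket + \llbracket P,\llbracket P, f\rrbracket\rrbracket.
\end{align*}

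Next I would apply the graded Jacobi identity to the second summand. Since $P$ has odd degree, the identity reads $\llbracket P,\llbracket P,f\rrbracket\rrbracket = \llbracket \llbracket P,P\rrbracket, f\rrbracket - \llbracket P,\llbracket P,f\rrbracket\rrbracket$, whence $\llbracket P,\llbracket P,f\rrbracket\rrbracket = \tfrac{1}{2}\llbracket \llbracket P,P\rrbracket, f\rrbracket$. Substituting the Maurer-Cartan equation $dP = -\tfrac{1}{2}\llbracket P,P\rrbracket$ into the first summand then gives
\begin{align*}
(d_P)^2 f \;=\; -\tfrac{1}{2}\llbracket \llbracket P,P\rrbracket, f\rrbracket + \tfrac{1}{2}\llbracket \llbracket P,P\rrbracket, f\rrbracket \;=\; 0,
\end{align*}
as desired.

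The only place one has to be attentive is the sign/degree bookkeeping: $P \in C^1(B,A)$ must be treated as an odd element of the graded Lie algebra $(\oplus_{n\geq 1}C^n(B,A), \llbracket\,,\,\rrbracket, d)$ so that both the derivation identity and the graded Jacobi identity produce the minus signs needed for the cancellations above. This is consistent with the fact that $P$ is a Maurer-Cartan element, and once this degree convention is fixed the computation is purely formal and identical to the standard argument that twisting a dgLa by a Maurer-Cartan element yields a new dgLa. I do not anticipate a genuine obstacle, only the need to state the degree convention clearly before performing the sign-sensitive manipulations.
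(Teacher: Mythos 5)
Your proposal is correct and follows exactly the same route as the paper's own proof: expand $(d_P)^2 f$, kill $d^2f$, cancel the cross terms via the derivation property of $d$ (with $P$ of odd degree so that $d\llbracket P,f\rrbracket = \llbracket dP,f\rrbracket - \llbracket P,df\rrbracket$), rewrite $\llbracket P,\llbracket P,f\rrbracket\rrbracket = \tfrac12 \llbracket\llbracket P,P\rrbracket,f\rrbracket$ by graded Jacobi, and conclude with the Maurer--Cartan equation $dP + \tfrac12\llbracket P,P\rrbracket = 0$. Your remark on fixing the degree convention (an element of $C^m(B,A)$ has degree $m$, so $P$ is odd) is consistent with the signs the paper uses in its derivation identity.
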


\begin{proof}
 For any $f\in C^n(B,A)$, we have 
 \begin{align*}
 (d_P)^2(f) &= d_P\big(df +  \llbracket P,f\rrbracket\big) \\ 
 &= d^2 f + d  \llbracket P,f\rrbracket +  \llbracket P,df\rrbracket + \llbracket P, \llbracket P,f\rrbracket \rrbracket\\
 &=  \llbracket dP,f\rrbracket -  \llbracket P,df\rrbracket +  \llbracket P,df\rrbracket + \dfrac{1}{2}\llbracket \llbracket P, P \rrbracket, f \rrbracket\\
 &= \llbracket dP + \dfrac{1}{2} \llbracket P,P\rrbracket,f \rrbracket = 0.
 \end{align*} 
 This completes the proof.
\end{proof}

It follows from the above proposition that $\{C^\bullet(B, A),d_P\}$ is a cochain complex. The corresponding cohomology is called the {\bf cohomology} of the operator $P$. We denote the corresponding $n$-th cohomology group by $H_P^n(B, A)$.

Note that the differential $d_P$ is also a derivation for the graded Lie bracket $\llbracket ~, ~\rrbracket$. To see this, we observe that 
\begin{align*}
 d_P \llbracket f,g \rrbracket  &= d\llbracket f,g\rrbracket + \llbracket P, \llbracket f,g \rrbracket \rrbracket\\
 &= \llbracket df,g\rrbracket + (-1)^m \llbracket f,dg \rrbracket + \llbracket P, \llbracket f,g\rrbracket \rrbracket + (-1)^m \llbracket f, \llbracket P,g \rrbracket \rrbracket\\
 &= \llbracket d_P(f),g\rrbracket + (-1)^m \llbracket f,d_P(g)\rrbracket,
\end{align*}
for $f \in C^m(B,A)$ and $g\in C^n(B,A)$. In other words, the triple $(\displaystyle \oplus_{n=1}^{\infty} {C}^{n}( B,A),\llbracket ~,~\rrbracket,d_P )$ is a differential graded Lie algebra. This differential graded Lie algebra plays an important role in the deformations of the operator $P$. This is justified by the following result.

\begin{prop}
 Let $P: B\rightarrow A$ be a relative averaging algebra of weight $\lambda$. Given any other linear map $P' : B\rightarrow A$, the sum $P+P'$ is also a relative averaging algebra of weight $\lambda$ if and only if $P'$ is a Maurer-Cartan element of the differential graded Lie algebra  $(\displaystyle \oplus_{n=1}^{\infty} C^{n}( B,A),\llbracket ~,~\rrbracket,d_P )$.
\end{prop}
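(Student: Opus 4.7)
The plan is to chase this through the Maurer--Cartan calculus: both directions follow at once by expanding the twisted MC equation and using the MC equation that $P$ already satisfies.

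First I would invoke Theorem \ref{dgla-thmm} at the level of the sum: $P+P'$ is a relative averaging operator of weight $\lambda$ if and only if
\[
d(P+P')+\tfrac{1}{2}\llbracket P+P',\,P+P'\rrbracket \;=\;0.
\]
Using linearity of $d$ and bilinearity of $\llbracket\,,\,\rrbracket$, the left-hand side splits as
\[
dP+dP'+\tfrac12\llbracket P,P\rrbracket+\tfrac12\llbracket P,P'\rrbracket+\tfrac12\llbracket P',P\rrbracket+\tfrac12\llbracket P',P'\rrbracket.
\]
Here $P$ and $P'$ both live in $C^1(B,A)$ and hence have the same (odd) degree in the shifted grading of the graded Lie algebra $\bigl(\bigoplus_{n\ge 1}C^n(B,A),\llbracket\,,\,\rrbracket\bigr)$, so the graded antisymmetry $\llbracket f,g\rrbracket=-(-1)^{|f||g|}\llbracket g,f\rrbracket$ gives $\llbracket P,P'\rrbracket=\llbracket P',P\rrbracket$.

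Next I would use the hypothesis that $P$ is itself a relative averaging operator of weight $\lambda$, which by Theorem \ref{dgla-thmm} is exactly $dP+\tfrac12\llbracket P,P\rrbracket=0$. Cancelling these terms from the expansion reduces the displayed equation to
\[
dP'+\llbracket P,P'\rrbracket+\tfrac{1}{2}\llbracket P',P'\rrbracket \;=\;0.
\]
By the very definition $d_P(f)=df+\llbracket P,f\rrbracket$, this is precisely
\[
d_P(P')+\tfrac{1}{2}\llbracket P',P'\rrbracket \;=\;0,
\]
i.e.\ the Maurer--Cartan equation for $P'$ in the twisted differential graded Lie algebra $\bigl(\bigoplus_{n\ge 1}C^n(B,A),\llbracket\,,\,\rrbracket,d_P\bigr)$. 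Since each step above is an equivalence, both directions of the proposition are established simultaneously.

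There is no real obstacle here; the only point requiring care is the sign in the graded antisymmetry, where I must verify that $P$ and $P'$ sit in the same (odd) degree so that $\llbracket P,P'\rrbracket$ and $\llbracket P',P\rrbracket$ combine to $\llbracket P,P'\rrbracket$ rather than cancel. Once that is settled, the argument is a one-line expansion of the twisted Maurer--Cartan equation, so I would present it concisely immediately after restating the relevant definitions.
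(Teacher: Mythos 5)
Your proof is correct and follows essentially the same route as the paper's: expand $d(P+P')+\tfrac12\llbracket P+P',P+P'\rrbracket$, cancel the Maurer--Cartan equation for $P$, and recognize the remainder as $d_P(P')+\tfrac12\llbracket P',P'\rrbracket$. Your explicit check that $P,P'$ sit in odd degree so that $\llbracket P,P'\rrbracket=\llbracket P',P\rrbracket$ is a detail the paper leaves implicit, and it is correctly handled.
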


\begin{proof}
For any $P' : B \rightarrow A$, we have
\begin{align*}
    d (P + P') + \frac{1}{2} \llbracket P+P' , P+ P' \rrbracket =~& dP + dP' + \frac{1}{2} \big(  \llbracket P, P \rrbracket + 2 \llbracket P, P' \rrbracket + \llbracket P', P' \rrbracket  \big) \\
    =~& dP' + \llbracket P, P' \rrbracket + \frac{1}{2} \llbracket P' , P' \rrbracket   \qquad (\text{as } dP + \frac{1}{2} \llbracket P, P \rrbracket = 0) \\
    =~& d_P (P') + \frac{1}{2} \llbracket P', P' \rrbracket.
\end{align*}
Hence the result follows.
\end{proof}  

The above proposition shows that the differential graded Lie algebra $(\displaystyle \oplus_{n=1}^{\infty} {C}^{n}( B,A),\llbracket ~,~\rrbracket,d_P )$ controls the deformations of the operator $P$. For this reason, we call the differential graded Lie algebra $(\displaystyle \oplus_{n=1}^{\infty} {C}^{n}( B, A), \llbracket ~,~\rrbracket,d_P)$ as the controlling algebra for the deformations of $P$.

\section{Relative averaging algebras of any nonzero weight}\label{sec5}

In this section, we first consider relative averaging algebras of any nonzero weight $\lambda$ and find their functorial relations with triassociative algebras. Next, we construct a $L_\infty$-algebra whose Maurer-Cartan elements correspond to relative averaging algebras of weight $\lambda$. Then by a twisting procedure, we construct the controlling $L_\infty$-algebra of a given relative averaging algebra of nonzero weight.


\begin{defn}
A {\bf relative averaging algebra of weight $\lambda$} is a triple $(A, B, P)$ in which $A$ is an associative algebra, $B$ is an associative $A$-bimodule and $P : B \rightarrow A$ is a relative averaging operator of weight $\lambda$.
\end{defn}


We often denote an averaging algebra of weight $\lambda$ as above by the notation $(A_\mu, B_\nu^{l,r}, P)$ when the underlying structures are required to mention.

\begin{remark}\label{avg-ravg}
Let $(A, P)$ be an averaging algebra of weight $\lambda$. Then $(A, A, P)$ is a relative averaging algebra of weight $\lambda$, where $A$ is equipped with the adjoint $A$-bimodule structure.
\end{remark}

\begin{defn}
Let $(A,B,P)$ and $(A', B', P')$ be two relative averaging algebras of weight $\lambda$. A {\bf morphism} of relative averaging algebras of weight $\lambda$ from $(A,B,P)$ to $(A', B', P')$ is given by a pair $(\varphi, \psi)$ of algebra homomorphisms $\varphi : A \rightarrow A'$ and $\psi : B \rightarrow B'$ satisfying additionally
\begin{align*}
\psi (a \cdot x) = \varphi (a) \cdot' \psi(x), \quad \psi (x \cdot a) = \psi (x) \cdot' \varphi(a) ~~~ \text{ and } ~~~ \varphi \circ P = P' \circ \psi, \text{ for } a \in A, x \in B. 
\end{align*}
Here $\cdot'$ denotes both the left and right $A'$-actions on $B'$.
\end{defn}

The collection of all relative averaging algebras of weight $\lambda$ and morphisms between them forms a category, denoted by ${\bf rAvg_\lambda}$. If $(A,B,P)$ is a relative averaging algebra of any nonzero weight $\lambda$ then $(A, B, \frac{1}{\lambda}P)$ is a relative averaging algebra of weight $1$ and vice-versa. Therefore, for any $\lambda \neq 0$, the category ${\bf rAvg_\lambda}$ is equivalent to the category ${\bf rAvg_1}$ of relative averaging algebras of weight $1$.

\begin{prop}\label{ravg-ind-triass}
(i) Let $(A,B,P)$ be a relative averaging algebra of weight $\lambda$. Then the vector space $B$ with the operations
\begin{align}\label{induced-triass}
x \dashv_P y := x \cdot P(y), \quad x \vdash_P y := P(x) \cdot y ~~~ \text{ and } ~~~ x \perp_P y := \lambda x y, \text{ for } x, y \in B,
\end{align}
is a triassociative algebra. The triassociative algebra $(B, \dashv_P, \vdash_P, \perp_P)$ is said to be induced from the given relative averaging algebra of weight $\lambda$.

(ii) Let $(A,B,P)$ and $(A', B', P')$ be two relative averaging algebras of weight $\lambda$, and $(\varphi, \psi)$ be a morphism between them. Then the linear map $\psi : B \rightarrow B'$ is a morphism of induced triassociative algebras from $(B, \dashv_P, \vdash_P, \perp_P)$ to $(B', \dashv_{P'}, \vdash_{P'}, \perp_{P'})$.
\end{prop}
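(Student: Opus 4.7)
The plan is to leverage the graph construction already established in Proposition \ref{gr} and the triassociative structure on $A\oplus_\lambda B$ from Proposition \ref{l-triass}. Rather than grinding through all eleven triassociative axioms \eqref{a1}--\eqref{a11} directly for $(B,\dashv_P,\vdash_P,\perp_P)$, I will transport the triassociative axioms from the subalgebra $\mathrm{Gr}(P)\subset A\oplus_\lambda B$ to $B$ via the natural bijection $x\mapsto (P(x),x)$.

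For part (i), I would define the linear map $\iota:B\to A\oplus_\lambda B$ by $\iota(x)=(P(x),x)$, which is manifestly injective. The main step is to check that $\iota$ intertwines each of the three candidate operations $\dashv_P,\vdash_P,\perp_P$ on $B$ with the triassociative operations on $A\oplus_\lambda B$. Using the averaging identities
\[
P(x)P(y)=P(x\cdot P(y))=P(P(x)\cdot y)=\lambda P(xy),
\]
a direct computation gives, for each $\star\in\{\dashv,\vdash,\perp\}$,
\[
\iota(x)\star\iota(y)=\iota(x\star_P y).
\]
For instance, $\iota(x)\perp\iota(y)=(P(x)P(y),\lambda xy)=(\lambda P(xy),\lambda xy)=\iota(\lambda xy)=\iota(x\perp_P y)$, and analogously for $\dashv$ and $\vdash$. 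This shows that $\iota$ sends any would-be relation in $B$ into the corresponding relation already satisfied in the triassociative algebra $A\oplus_\lambda B$ (by Proposition \ref{l-triass}). Since $\iota$ is injective, each identity \eqref{a1}--\eqref{a11} in $A\oplus_\lambda B$ pulls back to the same identity in $B$, so $(B,\dashv_P,\vdash_P,\perp_P)$ is a triassociative algebra.

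For part (ii), the verification is a direct chase through the definitions of a morphism of relative averaging algebras. Given $(\varphi,\psi):(A,B,P)\to(A',B',P')$, I would check each of the three operations separately. For $\dashv_P$ one uses that $\psi$ is a bimodule map and $\varphi\circ P=P'\circ\psi$:
\[
\psi(x\dashv_P y)=\psi(x\cdot P(y))=\psi(x)\cdot'\varphi(P(y))=\psi(x)\cdot'P'(\psi(y))=\psi(x)\dashv_{P'}\psi(y),
\]
and symmetrically for $\vdash_P$. For $\perp_P$, only the fact that $\psi$ is an algebra homomorphism is needed: $\psi(x\perp_P y)=\lambda\psi(xy)=\lambda\psi(x)\psi(y)=\psi(x)\perp_{P'}\psi(y)$.

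There is no real obstacle here; the proof is essentially bookkeeping. The only thing to be careful about is that the graph-transport argument in part (i) requires $\iota$ to land in a \emph{subalgebra}, which is precisely the content of Proposition \ref{gr}, and that injectivity (rather than surjectivity) is what lets us pull identities back rather than push them forward. Once these two conceptual points are in place, both statements follow at once.
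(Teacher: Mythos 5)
Your proof is correct and follows essentially the same route as the paper: part (i) transports the triassociative structure from the subalgebra $\mathrm{Gr}(P)\subset A\oplus_\lambda B$ (Proposition \ref{gr}) along the bijection $x\mapsto(P(x),x)$, and part (ii) is the same direct verification of the three compatibility identities. Your write-up is slightly more explicit than the paper's about why the intertwining and injectivity of $\iota$ let the identities pull back, but the argument is the same.
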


\begin{proof}
(i) Since $P$ is a relative averaging operator of weight $\lambda$, it follows that $Gr(P)$ is a subalgebra of the triassociative algebra  $A \oplus_\lambda B$ (cf. Proposition \ref{gr}). As $Gr(P)$ is isomorphic to $B$, the space $B$ inherits a triassociative algebra structure. The triassociative structure is precisely given by (\ref{induced-triass}).

(ii) For any $x, y \in B$, we have
\begin{align*}
    &\psi (x \dashv_P y ) = \psi (x \cdot P(y)) = \psi (x) \cdot' \varphi P(y) = \psi (x) \cdot' P' \psi(y) = \psi (x) \dashv_{P'} \psi (y), \\
    &\psi (x \vdash_P y) = \psi (P(x) \cdot y) = \varphi P(x) \cdot' \psi (y) = P' \psi (x) \cdot' \psi(y) = \psi(x) \vdash_{P'} \psi (y), \\
    &\psi (x \perp_P y) = \psi (\lambda xy) = \lambda \psi(x) \psi(y) = \psi (x) \perp_{P'} \psi(y).
\end{align*}
This completes the proof.
\end{proof}

The above proposition shows that there is a functor $\mathcal{F}$ from the category ${\bf rAvg_\lambda}$ (in particular from the category ${\bf rAvg_\lambda}$) to the category ${\bf Triass}$ of triassociative algebras. In the following, we construct a functor from the category $\bf{Triass}$ to the category $\bf{rAvg_1}$ of relative averaging algebras of weight 1.

We start with a triassociative algebra $(D,\dashv,\vdash,\perp)$. Let $I$ be the ideal of $D$ generated by the set $\big\{x\dashv y - x\vdash y ~\big|~ x,y\in D\big\}$. Then the quotient space $D/I$ has the obvious associative algebra structure with the multiplication \begin{align*}
 [x] [y]:= [x\dashv y] = [x\vdash y], \text{ for } [x],[y]\in D/I.   
\end{align*}
We denote this associative algebra simply by $D_{ass}$. On the other hand, the vector space $D$ has the associative product $\perp$. Denote this associative algebra $D_{\perp}$. Moreover, the associative algebra $D_{\perp}$ can be given an associative $D_{ass}$-bimodule structure with the left and right $D_{ass}$-actions given by \begin{center}
 $[x]\cdot y = x\vdash y$ and  $y\cdot [x] = y\dashv x$, for $[x]\in D_{ass},~ y\in D_{\perp}$.
\end{center} 
With these notations, the quotient map $q:D_{\perp}\rightarrow D_{ass}$, $q(x) = [x]$, for $x\in D_{\perp}$, is a relative averaging operator of weight 1. In other words, $( D_{ass}, D_\perp, q)$ is a relative averaging algebra of weight $1$. Moreover, the induced triassociative algebra structure on $D$ coincides with the given one.

Let $(D,\dashv,\vdash,\perp)$ and $(D',{\dashv}',{\vdash}',{\perp}')$ be two triassociative algebras and $\psi:D\rightarrow D'$ be a morphism between them. Then it can check that the pair 
\begin{align*}
 (\varphi^{\psi}, \psi) :  ( D_{ass}, D_\perp, q) \rightarrow  ( D'_{ass} , D'_{\perp'}, q')
\end{align*}
is a morphism of relative averaging algebras of weight $1$, where $\varphi^\psi ([x]) = [\psi (x)]$ for $[x] \in D_{ass}$. These constructions yield a functor $\mathcal{G}$ from the category ${\bf Triass}$ to the category ${\bf rAvg_1}$. Moreover, we have the following result whose proof is straightforward.

\begin{prop}
    The functor $\mathcal{G}$ is left adjoint to the functor $\mathcal{F}$. More precisely, for any triassociative algebra $(D, \dashv, \vdash, \perp)$ and an averaging algebra $(A, B, P)$ of weight $1$, there is a bijection
    \begin{align*}
        \mathrm{Hom}_{\bf Triass} \big(  (D, \dashv, \vdash, \perp), (B, \dashv_P, \vdash_P, \perp_P) \big) ~ \cong ~ \mathrm{Hom}_{\bf rAvg_1} \big(  (D_{ass}, D_\perp, q), (A, B, P) \big).
    \end{align*}
\end{prop}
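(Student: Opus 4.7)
The plan is to construct mutually inverse maps between the two Hom-sets by leveraging Proposition \ref{ravg-ind-triass}(ii) in one direction and using the universal property of the quotient $D_{ass} = D/I$ together with the averaging identity in the other.

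First I would unpack what the two sides of the claimed bijection mean. A morphism in $\mathbf{rAvg_1}$ from $(D_{ass}, D_\perp, q)$ to $(A, B, P)$ is a pair $(\varphi, \psi)$ with $\varphi: D_{ass} \to A$ an algebra homomorphism, $\psi: D_\perp \to B$ an algebra homomorphism (so $\psi(x \perp y) = \psi(x)\psi(y)$), compatible with the bimodule actions, and satisfying $\varphi \circ q = P \circ \psi$. Translating the bimodule and equivariance conditions through the formulas $[x]\cdot y = x \vdash y$, $y \cdot [x] = y \dashv x$, and $q(x) = [x]$, one sees that they respectively read $\psi(x \vdash y) = P(\psi(x)) \cdot \psi(y)$ and $\psi(y \dashv x) = \psi(y) \cdot P(\psi(x))$. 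Together with the $\perp$-multiplicativity, these are exactly the conditions that $\psi: D \to B$ be a morphism of triassociative algebras from $(D, \dashv, \vdash, \perp)$ to $(B, \dashv_P, \vdash_P, \perp_P)$.

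Next I would construct the inverse map. Given $\psi: D \to B$ a morphism of triassociative algebras, define $\varphi: D_{ass} \to A$ by $\varphi([x]) := P(\psi(x))$. The step I expect to be the main obstacle is verifying that $\varphi$ is well-defined on $D_{ass} = D/I$; this is where the averaging axiom of weight $1$ intervenes crucially. For a generator $x \dashv y - x \vdash y$ of $I$, one computes
\begin{align*}
P(\psi(x \dashv y)) = P(\psi(x) \cdot P(\psi(y))) = P(\psi(x))\, P(\psi(y)) = P(P(\psi(x)) \cdot \psi(y)) = P(\psi(x \vdash y)),
\end{align*}
using the averaging identities for $P$. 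One must then propagate this through products from both sides (which is possible because $I$ is a two-sided ideal and $P$ absorbs both left and right multiplications into $P(\psi(\cdot))\,P(\psi(\cdot))$), and also through the ambient linearity, to conclude that $\psi(I) \subset \ker(P)$.

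Once $\varphi$ is well-defined, checking that it is an algebra homomorphism is immediate: $\varphi([x][y]) = \varphi([x \dashv y]) = P(\psi(x) \cdot P(\psi(y))) = P(\psi(x))\, P(\psi(y)) = \varphi([x])\varphi([y])$. The equivariance $\varphi \circ q = P \circ \psi$ holds by construction, and the bimodule compatibilities follow directly from the triassociative morphism conditions on $\psi$. Finally, the two constructions are mutually inverse: starting from $(\varphi, \psi)$, the equivariance condition forces $\varphi([x]) = \varphi(q(x)) = P(\psi(x))$, so $\varphi$ is reconstructed from $\psi$, while conversely, the forward direction applied after the backward one returns the same $\psi$ tautologically. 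Naturality in both variables is routine to check and establishes the claimed adjunction.
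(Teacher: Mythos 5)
Your proof is correct; the paper itself omits the argument, stating only that it is ``straightforward,'' and what you give is precisely the natural verification being alluded to. In particular you correctly identify and handle the one point requiring care --- that $\varphi([x]):=P(\psi(x))$ is well defined on $D_{ass}=D/I$, which follows because $P(\psi(d\star u))=P(\psi(d))P(\psi(u))$ for every operation $\star\in\{\dashv,\vdash,\perp\}$ by the weight-$1$ averaging identities, so $P\circ\psi$ annihilates the whole ideal generated by the elements $x\dashv y-x\vdash y$.
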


\medskip

\begin{remark}
In \cite{jaa} the authors have considered the relative Rota-Baxter algebra of weight $\lambda$ as a generalization of a Rota-Baxter algebra. Let $A$ be an associative algebra and $B$ an associative $A$-bimodule. A linear map $R:B \rightarrow A$ is a relative Rota-Baxter operator of weight $\lambda$ if it satisfies
\begin{align}\label{rel-rb-w}
 R(x)R(y)= R(R(x) \cdot y + x \cdot R(y)+ \lambda xy), \text{ for } x, y \in B.
  \end{align}
  In this case, the triple $(A, B, R)$ is called a relative Rota-Baxter algebra of weight $\lambda$. In \cite{fard} Ebrahimi-Fard showed that a relative Rota-Baxter algebra of weight $\lambda$ induces a tridendriform algebra structure on $B$, given by
  \begin{align*}
  x \prec y = x \cdot R(y), \quad  x \succ y = R(x) \cdot y ~~~~ \text{ and } ~~~~ x \curlyvee y = \lambda xy, \text{ for } x, y \in B.
  \end{align*}
(See \cite{loday-ronco} for the definition of a tridendriform algebra).  Further, it follows from (\ref{rel-rb-w}) that a relative averaging algebra of weight $\lambda$ is a relative Rota-Baxter algebra of weight $ - \lambda$. Next, suppose we start with a triassociative algebra $(D, \dashv, \vdash, \perp)$. Then we have seen earlier that $(D_{ass}, D_\perp, q)$ is a relative averaging algebra of weight $1$. As a consequence, $(D_{ass}, D_\perp, q)$ is a relative Rota-Baxter algebra of weight $-1$. Hence the vector space $D$ carries a tridendriform algebra structure with the operations
  \begin{align*}
  x \prec y := x \dashv y, \quad x \succ y := x \vdash y ~~~ \text{ and } ~~~ x \curlyvee y := - x \perp y, \text{ for } x, y \in D.
  \end{align*}
  With this tridendriform algebra (induced from a triassociative algebra), the diagram given in (\ref{tri-tri-diag}) commutes.
\end{remark}

\subsection{Some background on homotopy Lie algebras and Maurer-Cartan characterization of relative averaging algebras of nonzero weight} 
In this subsection, we mainly recall some definitions regarding $L_\infty$-algebras (strongly homotopy Lie algebras). Throughout the present paper, we follow the definition of a $L_\infty$-algebra given in \cite{lada-markl}.
 \begin{defn}
 A {\bf $L_\infty$-algebra} is a pair $(\mathcal{L},\{l_k\}_{k=1}^{\infty})$ consisting of a graded vector space $\mathcal{L} = \displaystyle\oplus_{i\in \mathbb{Z}} \mathcal{L}_i$ equipped with a collection $\{l_k:{\mathcal{L}}^{\otimes k}\rightarrow \mathcal{L} \}_{k=1}^{\infty}$ of degree $1$ graded symmetric linear maps  satisfying the following higher Jacobi identities: 
 \begin{align}
\displaystyle\sum_{i+j=n+1}~\sum_{\sigma\in Sh(i,n-i)} l_j\big(l_i(x_{\sigma(1)},\ldots,x_{\sigma(i)}),x_{\sigma(i+1)},\ldots,x_{\sigma(n)}\big) = 0,
 \end{align}
 for all $n\geq 1$ and homogeneous elements $x_1,\ldots,x_n \in \mathcal{L}$.
 \end{defn}
 Any (differential) graded Lie algebra can be realized as a $L_{\infty}$-algebra by a degree shift. More precisely, let $(\mathfrak{g},[~,~],d)$ be a differential graded Lie algebra. Then $(s^{-1}\mathfrak{g},\{l_k\}_{k=1}^{\infty}))$ is a $L_{\infty}$-algebra, where \begin{align*}
 l_1(s^{-1}x) = s^{-1}(dx), ~~~ l_2(s^{-1}x,s^{-1}y) = (-1)^{\lvert x \rvert} s^{-1}[x,y]~~ \text{ and }~~l_k = 0 \text{ for } k\geq 3.   
 \end{align*}
A $L_\infty$-algebra  $(\mathcal{L},\{l_k\}_{k=1}^{\infty})$ is said to be weakly filtered if there exists a descending filtration $\mathcal{L}= {\mathcal{F}}_1\mathcal{L}\supset {\mathcal{F}}_2\mathcal{L} \supset \cdots  \supset {\mathcal{F}}_n\mathcal{L}\supset \cdots $ and a natural number $N\in \mathbb{N}$ such that 
\begin{align*}
     \mathcal{L} = \lim {\mathcal{L}/{\mathcal{F}}_n\mathcal{L}} ~~ \text{ and } ~~ l_k(x_1,\ldots,x_k) \in {\mathcal{F}}_k\mathcal{L}, \text{ for all } k\geq N \text{ and } x_1,\ldots,x_n \in \mathcal{L}.
    \end{align*}
Weakly filtered $L_\infty$-algebras are useful to make sense of certain infinite sums in $\mathcal{L}$ \cite{getzler}. Throughout our paper, we assume that all $L_\infty$-algebras are weakly filtered.
\begin{defn}
  Let $(\mathcal{L},\{l_k\}_{k=1}^{\infty})$ be a $L_{\infty}$-algebra. An element $\alpha\in \mathcal{L}_0$ is said to be a {\bf Maurer-Cartan element} of the $L_{\infty}$-algebra if \begin{align*}
  \displaystyle\sum_{k=1}^{\infty} \dfrac{1}{k!}~l_k(\alpha,\ldots,\alpha) = 0.   
  \end{align*} 
\end{defn}
If $\alpha$ is a Maurer-Cartan element of the $L_{\infty}$-algebra $(\mathcal{L},\{l_k\}_{k=1}^{\infty})$ then one can construct a new $L_{\infty}$-algebra $(\mathcal{L},\{{l^\alpha_k}\}_{k=1}^{\infty})$ on the same graded vector space $\mathcal{L}$. The structure maps $\{{l^\alpha_k}\}_{k=1}^{\infty}$ for this new $L_{\infty}$-algebra are given by \begin{align*}
 {l^\alpha_k}(x_1,\ldots,x_k) =  \displaystyle\sum_{n=0}^{\infty} \dfrac{1}{n!}~l_{n+k}(\alpha,\ldots,\alpha,x_1,\ldots,x_k), \text{ for } x_1,\ldots x_k\in \mathcal{L}.   
\end{align*}

The $L_{\infty}$-algebra $(\mathcal{L},\{{l^\alpha_k}\}_{k=1}^{\infty})$ is said to be obtained from $(\mathcal{L},\{l_k\}_{k=1}^{\infty})$ twisted by the Maurer-Cartan element $\alpha$. For any $\alpha'\in {\mathcal{L}}_0$, the sum $\alpha + {\alpha}'$ is a Maurer-Cartan element of the $L_{\infty}$-algebra  $(\mathcal{L},\{l_k\}_{k=1}^{\infty})$ if and only if $\alpha'$ is a Maurer-Cartan element of the $L_{\infty}$-algebra $(\mathcal{L},\{{l^\alpha_k}\}_{k=1}^{\infty})$.

Next, we recall Voronov's construction of a $L_{\infty}$-algebra \cite{voro}. Let $(\mathfrak{g},\mathfrak{a},p,\Delta)$ be a quadruple consisting of a graded Lie algebra $\mathfrak{g}$ (with the bracket [~,~]), an abelian Lie subalgebra $\mathfrak{a}\subset \mathfrak{g}$, a projection map $p : \mathfrak{g}\rightarrow \mathfrak{g}$ such that $\mathrm{ker}(p)\subset \mathfrak{g}$ is a graded Lie subalgebra and $\mathrm{im}(p) = \mathfrak{a}$, and an element $\Delta\in \mathrm{ker}{(p)}_1$ that satisfies $[\Delta,\Delta] =0$. Such a quadruple is called a {\em $V$-data}.

\begin{thm}\label{voro-thm}
  Let $(\mathfrak{g},\mathfrak{a},p,\Delta)$ be a $V$-data.
  
  (i) Then the graded vector space $\mathfrak{a}$ carries a $L_{\infty}$-algebra with the structure maps $\{l_k:{\mathfrak{a}}^{\otimes k}\rightarrow \mathfrak{a}\}_{k=1}^{\infty}$  given by \begin{align*}
  l_k(a_1,\ldots,a_k) = p[\cdots[[\Delta,a_1],a_2],\ldots,a_k],~~for~~k\geq 1.    
  \end{align*}
  
(ii) Let $\mathfrak{h}\subset \mathfrak{g}$ be a graded Lie subalgebra that satisfies $[\Delta,\mathfrak{h}]\subset \mathfrak{h}$. Then the graded vector space $s^{-1}\mathfrak{h}\oplus \mathfrak{a}$ carries a $L_{\infty}$-algebra with the structure maps \begin{align*}
l_1\big((s^{-1}x,a)\big) =~& \big(-s^{-1}[\Delta,x],~ p(x+[\Delta,a])\big),\\ 
l_2\big((s^{-1}x,0),(s^{-1}y,0)\big) =~& \big((-1)^{ \lvert x \rvert} s^{-1}[x,y],0\big),\\
l_k\big((s^{-1}x,0),(0,a_1),\ldots,(0,a_{k-1})\big) =~& \big(0,p~[\cdots [[x,a_1],a_2],\ldots,a_{k-1} ]\big),~~ for~ k\geq 2,\\
l_k\big((0,a_1),\ldots,(0,a_{k})\big) =~& \big(0,p~[\cdots [[\Delta,a_1],a_2 ],\ldots,a_{k}]\big),~~ for~ k\geq 2. 
\end{align*} 
Here $x,y$ are homogeneous elements of $\mathfrak{h}$ and $a,a_1,\ldots,a_k$ are homogeneous elements of $\mathfrak{a}$. Up to the permutation of the above inputs, all other maps vanish. 
\end{thm}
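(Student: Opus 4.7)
The plan is to verify the higher Jacobi identities for the proposed structure maps by systematic use of the four axioms of a $V$-data: the graded Jacobi identity in $\mathfrak{g}$; the containment $[\Delta,\mathrm{ker}(p)]\subset \mathrm{ker}(p)$ (which holds because $\mathrm{ker}(p)$ is a graded Lie subalgebra and $\Delta \in \mathrm{ker}(p)$); the fact that $\mathfrak{a}=\mathrm{im}(p)$ is an abelian subalgebra; and the Maurer-Cartan condition $[\Delta,\Delta]=0$. The single computational device I would use throughout is the ``sliding'' identity $p[\Delta,x]=p[\Delta,p(x)]$ for any $x\in \mathfrak{g}$, which follows by writing $x=p(x)+(1-p)(x)$ and noting that $[\Delta,(1-p)(x)]$ lies in $\mathrm{ker}(p)$. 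This lets one pass projections across nested brackets at will.

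For part (i), the first check is $l_1^2=0$: expanding $l_1^2(a)=p[\Delta,p[\Delta,a]]$ and applying the sliding trick gives $p[\Delta,[\Delta,a]]$, which vanishes by $[\Delta,[\Delta,a]]=\tfrac{1}{2}[[\Delta,\Delta],a]=0$. For the general identity, each $l_k(a_1,\ldots,a_k)$ is an iterated derived bracket, and I would expand the shuffle sum $\sum_{i+j=n+1}\sum_{\sigma}l_j\bigl(l_i(a_{\sigma(1)},\ldots,a_{\sigma(i)}),a_{\sigma(i+1)},\ldots,a_{\sigma(n)}\bigr)$, slide every internal projection outward, and rewrite everything as a single outer $p$ applied to fully nested brackets of $\Delta$'s and the $a_i$'s. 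Repeated applications of graded Jacobi then reorganise these into terms each containing a factor $[\Delta,\Delta]$, which vanishes. Abelianness of $\mathfrak{a}$ is used to discard those configurations in which two already-projected expressions are bracketed together inside a further bracket.

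For part (ii), the strategy mirrors part (i) but is stratified by the number of inputs drawn from the $s^{-1}\mathfrak{h}$-summand. When no $s^{-1}\mathfrak{h}$-input appears, the identities reduce directly to part (i). With exactly one input $s^{-1}x$, the hypothesis $[\Delta,\mathfrak{h}]\subset \mathfrak{h}$ ensures that the nested brackets $[\cdots[[x,a_1],a_2],\ldots,a_{k-1}]$ stay inside $\mathfrak{h}\subset \mathfrak{g}$ so that the formulas make sense, and a single application of graded Jacobi cancels the $s^{-1}\mathfrak{h}$-component produced by $l_1$ on $(s^{-1}x,0)$ against the boundary contributions in the $\mathfrak{a}$-component. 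With two inputs from $s^{-1}\mathfrak{h}$, only $l_2$ on this summand contributes, and the Jacobi relation reduces to the standard fact that the desuspension $s^{-1}\mathfrak{h}$ of a differential graded Lie algebra (here with differential $[\Delta,-]$) is an $L_\infty$-algebra. No configuration with three or more $s^{-1}\mathfrak{h}$-inputs appears because the corresponding structure maps vanish by definition.

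The main obstacle is bookkeeping: tracking Koszul signs from the desuspension, managing shuffle permutations in each Jacobi identity, and handling the two components $(s^{-1}\mathfrak{h},\mathfrak{a})$ of each output in part (ii). A conceptually tidier route, which I would use as a sanity check, is to package the whole construction as a single Maurer-Cartan-type element in a larger graded Lie algebra obtained by adjoining to $\mathfrak{g}$ formal generators for the elements of $s^{-1}\mathfrak{h}$; the maps $l_k$ then appear uniformly as Taylor coefficients of a twist, and every $L_\infty$-axiom collapses to the single identity $[\Delta,\Delta]=0$ together with graded Jacobi in the ambient algebra.
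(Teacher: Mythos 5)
The paper itself offers no proof of this statement: it is recalled verbatim from Voronov's paper on higher derived brackets (\cite{voro}), so the only meaningful benchmark is Voronov's original argument. Measured against that, your sketch has a genuine gap at its load-bearing step. The ``sliding'' identity $p[\Delta,x]=p[\Delta,p(x)]$ is correct, but it only removes a projection sitting \emph{immediately} inside the outermost bracket with $\Delta$; it does not let you ``pass projections across nested brackets at will.'' Concretely, $[\Delta,(1-p)(y)]$ does lie in $\mathrm{ker}(p)$, but after bracketing further with elements of $\mathfrak{a}$ the result can leave $\mathrm{ker}(p)$, since nothing in the $V$-data controls $[\mathrm{ker}(p),\mathfrak{a}]$. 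Already for $n=2$, the term $l_2(l_1(a),b)=p[[\Delta,p[\Delta,a]],b]$ differs from $p[[\Delta,[\Delta,a]],b]=0$ by the generally nonzero quantity $p[[\Delta,(1-p)[\Delta,a]],b]$, and the entire content of the theorem is that these error terms cancel across the shuffle sum against the cross terms arising from $p[[\Delta,a],[\Delta,b]]$ (whose $(p,p)$-part dies by abelianness of $\mathfrak{a}$ and whose $((1-p),(1-p))$-part dies because $\mathrm{ker}(p)$ is a subalgebra). Voronov organizes this by proving that the $n$-th Jacobiator of the derived brackets equals the $n$-th derived bracket of $\tfrac{1}{2}[\Delta,\Delta]$, an identity established by induction; your plan assumes the cancellation away rather than proving it.

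The same issue infects part (ii). With two $s^{-1}\mathfrak{h}$-inputs and additional $\mathfrak{a}$-inputs the higher Jacobi identities do not reduce to the desuspension of a differential graded Lie algebra: mixed terms such as $l_2\big(l_2((s^{-1}x,0),(0,a)),(s^{-1}y,0)\big)=(0,p[y,p[x,a]])$ appear and must cancel against $l_2\big(l_2((s^{-1}x,0),(s^{-1}y,0)),(0,a)\big)=\pm(0,p[[x,y],a])$ and the $\Delta$-contributions, and once again the inner projections cannot simply be slid out. Your closing suggestion of encoding the construction as a Maurer--Cartan element in an enlarged graded Lie algebra is the right instinct --- it is close in spirit to how Voronov handles the general case --- but as written it is a gesture rather than an argument.
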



\medskip

Let $A$ and $B$ be two vector spaces. Consider the graded Lie algebra
\begin{align*}
    \mathfrak{g} = \big(  \oplus_{n=0}^\infty C^{n+1} (A \oplus B, A \oplus B) = \oplus_{n=0}^\infty \mathrm{Hom} ( \mathbb{K}[T_{n+1}] \otimes (A \oplus B)^{\otimes n+1}, A \oplus B   ) , [~,~]  \big)
\end{align*}
associated with the direct sum vector space $A \oplus B$. Then we have the following.

\begin{prop}
    The graded subspace $\mathfrak{a} = \oplus_{n=0}^\infty C^{n+1} (B, A) = \oplus_{n=0}^\infty \mathrm{Hom} (\mathbb{K} [T_{n+1}] \otimes B^{\otimes n}, A)$ is an abelian Lie subalgebra of $\mathfrak{g}$.
\end{prop}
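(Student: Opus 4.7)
The plan is to verify the two claims separately by exploiting the type constraints built into the definition of $\mathfrak{a}$. First, one views an element $f \in C^{n+1}(B,A)$ as an element of $C^{n+1}(A\oplus B, A\oplus B)$ by extending by zero whenever any of the inputs carries a non-zero $A$-component, and by regarding the output as living in the $A$-summand of $A\oplus B$. In other words, under the embedding $\mathfrak{a}\hookrightarrow \mathfrak{g}$, an element of $\mathfrak{a}$ is characterised as a map that \emph{accepts only $B$-valued inputs} and \emph{produces only $A$-valued outputs}.

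Next, I would pick arbitrary homogeneous elements $f \in C^{m}(B,A)$ and $g \in C^{n}(B,A)$ and compute the circle product $f\circ_i g$ via formula \eqref{tri-circ}. For any $T\in T_{m+n-1}$ and any $(a_1,x_1),\ldots,(a_{m+n-1},x_{m+n-1}) \in A\oplus B$, the inner evaluation
\[
 g\bigl(R_i^{m;i,n}(T);(a_i,x_i),\ldots,(a_{i+n-1},x_{i+n-1})\bigr)
\]
either vanishes (if some $a_j\neq 0$) or lies purely in the $A$-summand (with zero $B$-component). Consequently the element fed into the $i$-th slot of $f$ has zero $B$-component, and by the extension convention $f$ annihilates it. Hence $f\circ_i g = 0$ for every $1\le i\le m$, and symmetrically $g\circ_i f = 0$.

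Applying the definition of the bracket,
\[
 [f,g] \;=\; \sum_{i=1}^{m}(-1)^{(i-1)(n-1)}\,f\circ_i g \;-\; (-1)^{(m-1)(n-1)}\sum_{i=1}^{n}(-1)^{(i-1)(m-1)}\,g\circ_i f \;=\; 0.
\]
This simultaneously shows that $\mathfrak{a}$ is closed under the bracket (it is a graded Lie subalgebra) and that the induced bracket is identically zero (it is abelian), completing the proof. The only subtlety to keep in mind is the bookkeeping of the embedding $\mathfrak{a}\hookrightarrow \mathfrak{g}$; once that is made precise, the argument is essentially a type-matching observation with no real computational obstacle.
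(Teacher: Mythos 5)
Your proof is correct and follows exactly the paper's argument: the paper likewise observes that for $f\in C^m(B,A)$ and $g\in C^n(B,A)$ the composition $f\circ_i g$ vanishes by formula \eqref{tri-circ}, hence $[f,g]=0$. You merely spell out the type-matching reason (the inner evaluation lands in the $A$-summand, which $f$ annihilates) that the paper leaves implicit.
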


\begin{proof}
Let $f \in C^m (B, A)$ and $g \in C^n (B, A)$. Then it follows from (\ref{tri-circ}) that $f \circ_i g = 0$, for all $1 \leq i \leq m$. Hence $[f,g] = 0$. This completes the proof.
\end{proof}

Let $\mathcal{A}^{k,l}$ be the direct sum of all possible $(k+l)$ tensor powers of $A$ and $B$ in which $A$ appears $k$ times and $B$ appears $l$ times. That is,
\begin{align*}
    \mathcal{A}^{1,1} = (A \otimes B) \oplus (B \otimes A), \quad  \mathcal{A}^{2,1} = (A \otimes A \otimes B) \oplus (A \otimes B \otimes A) \oplus (B \otimes A \otimes A) ~~~~ \text{etc.}
\end{align*}
Let $p: \mathfrak{g} \rightarrow \mathfrak{g}$ be the projection onto the subspace $\mathfrak{a}$. Then 
\begin{align*}
    \mathrm{ker}(p) =~& \bigoplus _{n=0}^\infty \bigg(    \oplus_{ \substack{k+l=n+1\\ k > 0 , l \geq 0}} \mathrm{Hom} (   \mathbb{K} [T_{n+1}] \otimes {\mathcal{A}^{k, l}}, A) \oplus \mathrm{Hom} ( \mathbb{K}[T_{n+1}] \otimes (A \oplus B)^{\otimes n+1}, B)  \bigg) \\
    =~& \bigoplus_{n=0}^\infty \bigg(    \oplus_{ \substack{k+l=n+1 \\ k > 0, l \geq 0}} \mathrm{Hom} (   \mathbb{K} [T_{n+1}] \otimes {\mathcal{A}^{k, l}}, A) \oplus     \oplus_{\substack{k+l=n+1 \\ k , l \geq 0}}  \mathrm{Hom} ( \mathbb{K}[T_{n+1}] \otimes \mathcal{A}^{k,l},  B)  \bigg).
\end{align*}
It is not hard to see that $\mathrm{ker}(p) \subset \mathfrak{g}$ is a graded Lie subalgebra. Hence $(\mathfrak{g}, \mathfrak{a}, p, \Delta = 0)$ is a $V$-data.

Let $\mathfrak{h} \subset \mathfrak{g}$ be the subspace given by
\begin{align*}
     \mathfrak{h} =~& \bigoplus_{n=0}^\infty \bigg(    \mathrm{Hom} (   \mathbb{K} [T_{n+1}] \otimes A^{\otimes n+1}, A)    \oplus_{ \substack{k+l=n+1\\ k \geq 0, l > 0}}  \mathrm{Hom} ( \mathbb{K} [T_{n+1}] \otimes \mathcal{A}^{k,l},  B)  \bigg) \\
     =~& \bigoplus_{n=0}^\infty \bigg(    \mathrm{Hom} (  \mathbb{K} [T_{n+1}] \otimes A^{\otimes n+1}, A)  \oplus  \mathrm{Hom} (  \mathbb{K} [T_{n+1}] \otimes B^{\otimes n+1}, B   )   \oplus_{ \substack{k+l=n+1 \\ k, l > 0}}  \mathrm{Hom} ( \mathbb{K} [T_{n+1}] \otimes \mathcal{A}^{k,l},  B)  \bigg).
\end{align*}
Then it can be checked that the subspace $\mathfrak{h} \subset \mathfrak{g}$ is a graded Lie subalgebra. Thus, it follows from Theorem \ref{voro-thm} that the graded space $s^{-1} \mathfrak{h} \oplus \mathfrak{a}$ carries a $L_\infty$-algebra structure with the operations
\begin{align*}
    l_2 \big(  (s^{-1}f, 0), (s^{-1} g,  0) \big) =~& ( (-1)^{|f|} s^{-1}[f,g], 0),\\
    l_k \big(  (s^{-1} f, 0), (0, h_1), \ldots, (0, h_{k-1})   \big) =~& (0, p [ \cdots [[f, h_1], h_2], \ldots, h_{k-1}]), \text{ for } k \geq 2
\end{align*}
and up to permutations of the above inputs, all other maps vanish.
Note that the graded vector space $s^{-1} \mathfrak{h} \oplus \mathfrak{a}$ is componentwise given by
\begin{align*}
(s^{-1} \mathfrak{h} \oplus \mathfrak{a})_{-1} =~& \mathfrak{h}_0 = \mathrm{Hom}(A,A) \oplus \mathrm{Hom}(B,B) \quad  (\text{we assume that } \mathfrak{a}_{-1} = 0) \\
(s^{-1} \mathfrak{h} \oplus \mathfrak{a})_{n \geq 0} =~& \mathrm{Hom}( \mathbb{K}[T_{n+2}] \otimes A^{\otimes n+2}, A) \oplus \mathrm{Hom}( \mathbb{K}[T_{n+2}] \otimes B^{\otimes n+2}, B) \\
 ~& \qquad \oplus_{ \substack{k+l=n+2 \\ k, l > 0}}  \mathrm{Hom} ( \mathbb{K} [T_{n+2}] \otimes \mathcal{A}^{k,l},  B) \oplus \mathrm{Hom} (\mathbb{K}[T_{n+1}] \otimes B^{\otimes n+1}, A).
\end{align*}

Next, suppose that the vector spaces $A$ and $B$ are equipped with the maps
\begin{align*}
\mu : A \otimes A \rightarrow A, \quad \nu : B \otimes B \rightarrow B, \quad l : A \otimes B \rightarrow B, \quad r : B \otimes A \rightarrow B ~~~~ \text{ and } ~~~~ P: B \rightarrow A.
\end{align*}
We define an element $\pi_\lambda \in \mathfrak{h}_1$ by
\begin{align*}
    \pi_\lambda  \big(  \begin{tikzpicture}[scale=0.10]
\draw (0,0) -- (0,-2); \draw (0,0) -- (-3,3);  \draw (0,0) -- (3,3); \draw (1.5,1.5) -- (0,3);
\end{tikzpicture} ;(a,x),(b,y) \big) &= ( \mu (a, b), r(x , b) ), \quad \pi_\lambda \big(\begin{tikzpicture}[scale=0.10]
 \draw (0,0) -- (0,-2); \draw (0,0) -- (-3,3);  \draw (0,0) -- (3,3); \draw (-1.5,1.5) -- (0,3);
\end{tikzpicture};(a,x),(b,y) \big) = ( \mu (a, b) , l (a, y)) \\
& \text{ and } ~~  \pi_\lambda \big(\begin{tikzpicture}[scale=0.10]
 \draw (0,0) -- (0,-2); \draw (0,0) -- (-3,3);  \draw (0,0) -- (3,3); \draw (0,0) -- (0,3);
\end{tikzpicture};(a,x), (b,y) \big) = ( \mu (a, b), \lambda \nu (x, y) ), 
\end{align*}
for $(a,x), (b, y) \in A \oplus B$. Then we have the following.


\begin{thm}
With the above notations, $A_\mu$ is an associative algebra, $B_\nu^{l, r}$ is an associative $A_\mu$-bimodule and $P : B \rightarrow A$ is a relative averaging operator of nonzero weight $\lambda$ if and only if $(s^{-1} {\pi_\lambda}, P)$ is a Maurer-Cartan element of the $L_\infty$-algebra $(s^{-1} \mathfrak{h} \oplus \mathfrak{a}, \{ l_k \}_{k=1}^\infty)$ constructed above. In other words,
\begin{align*}
&(A_\mu, B_\nu^{l, r} , P) \text{ is a relative averaging algebra of weight } \lambda ~  \\ & \quad \Leftrightarrow ~ (s^{-1} {\pi_\lambda}, P) \text{ is a Maurer-Cartan element of } (s^{-1} \mathfrak{h} \oplus \mathfrak{a}, \{ l_k \}_{k=1}^\infty).
\end{align*}
\end{thm}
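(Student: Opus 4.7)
The plan is to unfold the Maurer-Cartan equation $\sum_{k \geq 1} \frac{1}{k!} l_k(\alpha^{\otimes k}) = 0$ for $\alpha = (s^{-1}\pi_\lambda, P)$ using the explicit structure maps from Theorem \ref{voro-thm}(ii) with $\Delta = 0$, and then match its two components against the two defining data of a relative averaging algebra of weight $\lambda$. Because $\Delta = 0$, the Voronov formulas give $l_1(\alpha) = (0, p(\pi_\lambda))$, which vanishes since $\pi_\lambda \in \mathfrak{h}$ has no component in $\mathfrak{a}$. Likewise, $l_k$ evaluated on all-$\mathfrak{a}$ entries vanishes. Hence for each $k \geq 2$, the only surviving contributions come from $l_2((s^{-1}\pi_\lambda, 0), (s^{-1}\pi_\lambda, 0))$ and the mixed operations $l_k((s^{-1}\pi_\lambda, 0), (0, P), \ldots, (0, P))$. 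Both summands of $\alpha$ have degree zero, so graded symmetry introduces no signs, and standard combinatorial bookkeeping compresses the MC equation to
\begin{align*}
\Bigl(-\tfrac{1}{2}\, s^{-1}[\pi_\lambda, \pi_\lambda],~ \sum_{m \geq 1} \tfrac{1}{m!}\, p\bigl[\cdots[[\pi_\lambda, P], P], \ldots, P\bigr]\Bigr) = 0,
\end{align*}
where the $m$-th summand contains $m$ nested brackets with $P$.

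The vanishing of the first component is precisely $[\pi_\lambda, \pi_\lambda] = 0$, and by the construction of $\pi_\lambda$ together with the remark following Proposition \ref{l-triass}, this is equivalent to the data $(\mu, \nu, l, r)$ making $A_\mu$ an associative algebra and $B_\nu^{l,r}$ an associative $A_\mu$-bimodule. For the second component, I will show that $p[\pi_\lambda, P^m] = 0$ for all $m \geq 3$. Concretely, viewing $P$ as a linear self-map of $A \oplus B$ via $(a, x) \mapsto (P(x), 0)$, a direct tree-level calculation shows that $f_2 := [[\pi_\lambda, P], P] \in C^2(A \oplus B, A \oplus B)$ depends only on the $B$-components of its arguments and outputs elements with zero $B$-component. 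Under this structural constraint, each of the three summands $f_2 \circ_1 P$, $f_2 \circ_2 P$, $P \circ_1 f_2$ in $[f_2, P]$ evaluates to zero, so $[\pi_\lambda, P^3] = 0$ identically; an immediate induction then propagates the vanishing to all $m \geq 3$.

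Consequently the infinite sum collapses to $p[\pi_\lambda, P] + \tfrac{1}{2}\, p[[\pi_\lambda, P], P] = 0$. Evaluating this identity separately on the three elements of $T_2$, namely $\dashv$, $\vdash$, and $\perp$, yields respectively the three relations $P(x)P(y) = P(x \cdot P(y))$, $P(x)P(y) = P(P(x) \cdot y)$, and $P(x)P(y) = \lambda P(xy)$ for all $x, y \in B$, which are precisely the defining conditions for $P \colon B \to A$ to be a relative averaging operator of weight $\lambda$. This establishes the equivalence in both directions. The main technical obstacle is the stabilization of the higher brackets at zero for $m \geq 3$; I would isolate this as a standalone lemma, proved by the short induction on $m$ sketched above, with the base case $m = 3$ verified by an explicit tree-by-tree computation.
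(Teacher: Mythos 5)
Your proposal is correct and follows essentially the same route as the paper: expand the Maurer--Cartan sum using the Voronov structure maps with $\Delta=0$, observe that the nested brackets $[\cdots[[\pi_\lambda,P],P],\ldots,P]$ vanish from the third iterate onward so the sum truncates to $\bigl(-\tfrac12 s^{-1}[\pi_\lambda,\pi_\lambda],\,[\pi_\lambda,P]+\tfrac12[[\pi_\lambda,P],P]\bigr)$, and identify the two components with the bimodule axioms and the weighted averaging identity. The only cosmetic difference is that you re-derive the second equivalence by evaluating on the three trees of $T_2$, whereas the paper simply cites its earlier dgLa characterization (Theorem \ref{dgla-thmm}); your explicit lemma on the stabilization of the higher brackets is a welcome elaboration of what the paper dismisses as ``straightforward to observe.''
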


\begin{proof}
Note that
\begin{align*}
l_2 \big( (s^{-1} \pi_\lambda, P), (s^{-1} \pi_\lambda, P)      \big) = \big( - s^{-1} [\pi_\lambda, \pi_\lambda] , 2 l_2 (  (s^{-1} \pi_\lambda, 0), (0,P)    )   \big) = (-s^{-1} [\pi_\lambda, \pi_\lambda], 2 [\pi_\lambda, P]).
\end{align*}
On the other hand, it is straightforward to observe that
$[[[ \pi_\lambda, P], P], P] = 0$. As a consequence,
\begin{align*}
l_k \big(  (s^{-1} \pi_\lambda, P), \ldots, (s^{-1} \pi_\lambda, P)   \big) = 0, \text{ for } k \geq 4.
\end{align*}
Hence
\begin{align*}
&\sum_{k=1}^\infty \frac{1}{k!} l_k \big(  (s^{-1} \pi_\lambda, P), \ldots, (s^{-1} \pi_\lambda, P)   \big) \\
&= \frac{1}{2} l_2 \big( (s^{-1} \pi_\lambda, P), (s^{-1} \pi_\lambda, P)   \big) + \frac{1}{6} l_3 \big(  (s^{-1} \pi_\lambda, P), (s^{-1} \pi_\lambda, P), (s^{-1} \pi_\lambda, P)  \big) \\
&= \frac{1}{2} \big( -s^{-1} [\pi_\lambda, \pi_\lambda], 2 [\pi_\lambda, P]   \big) + \frac{1}{6} \big( 0, 3 [[ \pi_\lambda, P], P]   \big) \\
&= \big(  - \frac{1}{2} s^{-1} [\pi_\lambda, \pi_\lambda], [\pi_\lambda, P] + \frac{1}{2} [[\pi_\lambda, P], P]   \big).
\end{align*}
This shows that $(s^{-1} \pi_\lambda , P) \in (s^{-1} \mathfrak{h} \oplus \mathfrak{a})_0$ is a Maurer-Cartan element of the $L_\infty$-algebra if and only if $[\pi_\lambda, \pi_\lambda] = 0$ (i.e. $A_\mu$ is an associative algebra and $B_\nu^{l, r}$ is an associative $A_\mu$-bimodule) and 
\begin{align*}
[\pi_\lambda , P] + \frac{1}{2} [[\pi_\lambda, P], P] = 0
\end{align*}
(i.e. $P$ is a relative averaging operator of weight $\lambda$ (cf. Theorem \ref{dgla-thmm})). This completes the proof.
\end{proof}

Let $(A_\mu, B_\nu^{l, r} ,{P})$ be a relative averaging algebra of weight $\lambda$ with  $\alpha = (s^{-1} {\pi_\lambda}, P)$ being the corresponding Maurer-Cartan element of the $L_\infty$-algebra $(s^{-1} \mathfrak{h} \oplus \mathfrak{a}, \{ l_k \}_{k=1}^\infty)$. Hence one can construct the new $L_\infty$-algebra $(s^{-1} \mathfrak{h} \oplus \mathfrak{a}, \{ l_k^{  (s^{-1} {\pi_\lambda}, P)   } \}_{k=1}^\infty)$ twisted by the Maurer-Cartan element $\alpha = (s^{-1} {\pi_\lambda}, P)$.

\begin{thm}\label{sum-l-inf}
Let $(A_\mu, B_\nu^{l, r} , {P} )$ be a relative averaging algebra of weight $\lambda$. Then for any linear maps
\begin{align*}
\mu' : A \otimes A \rightarrow A, \quad \nu' : B \otimes B \rightarrow B, \quad l': A \otimes B \rightarrow B, \quad r' : B \otimes A \rightarrow B ~~~~ \text{ and } ~~~~ P' : B \rightarrow A,
\end{align*}
the structure $(A_{\mu + \mu'}, B_{\nu + \nu'}^{l+l', r + r'}, {P + P'})$ is a relative averaging algebra of weight $\lambda$ if and only if $(s^{-1} \pi_\lambda', P') \in (s^{-1} \mathfrak{h} \oplus \mathfrak{a})_0$ is a Maurer-Cartan element of the twisted $L_\infty$-algebra $(s^{-1} \mathfrak{h} \oplus \mathfrak{a}, \{ l_k^{  (s^{-1} {\pi_\lambda}, P)   } \}_{k=1}^\infty)$.
\end{thm}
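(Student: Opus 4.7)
The plan is to reduce the statement to the general twisting principle for $L_\infty$-algebras, namely that if $\alpha$ is a Maurer-Cartan element of a (weakly filtered) $L_\infty$-algebra $(\mathcal{L}, \{l_k\}_{k=1}^\infty)$, then for any $\alpha' \in \mathcal{L}_0$, the sum $\alpha + \alpha'$ is a Maurer-Cartan element of $(\mathcal{L}, \{l_k\}_{k=1}^\infty)$ if and only if $\alpha'$ is a Maurer-Cartan element of the twisted $L_\infty$-algebra $(\mathcal{L}, \{l_k^\alpha\}_{k=1}^\infty)$. This fact was recalled earlier in the subsection, so I can use it directly.

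First I would observe that the construction $\pi_\lambda \in \mathfrak{h}_1$ depends linearly on the tuple $(\mu, \nu, l, r)$ of structure maps. Explicitly, letting $\pi_\lambda'$ denote the element built from the perturbation data $(\mu', \nu', l', r')$ in exactly the same way as $\pi_\lambda$ is built from $(\mu, \nu, l, r)$, the element corresponding to the summed tuple $(\mu + \mu', \nu + \nu', l + l', r + r')$ is $\pi_\lambda + \pi_\lambda'$. Consequently, the candidate Maurer-Cartan element of $(s^{-1}\mathfrak{h} \oplus \mathfrak{a}, \{l_k\}_{k=1}^\infty)$ corresponding to the summed structure $(A_{\mu+\mu'}, B_{\nu+\nu'}^{l+l', r+r'}, P + P')$ is exactly
\[
\bigl(s^{-1}(\pi_\lambda + \pi_\lambda'), P + P'\bigr) \;=\; (s^{-1}\pi_\lambda, P) + (s^{-1}\pi_\lambda', P') \;=\; \alpha + (s^{-1}\pi_\lambda', P').
\]

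Next I would invoke the previous theorem twice. By that theorem, $(A_{\mu+\mu'}, B_{\nu+\nu'}^{l+l', r+r'}, P + P')$ is a relative averaging algebra of weight $\lambda$ if and only if $\alpha + (s^{-1}\pi_\lambda', P')$ is a Maurer-Cartan element of the untwisted $L_\infty$-algebra $(s^{-1}\mathfrak{h} \oplus \mathfrak{a}, \{l_k\}_{k=1}^\infty)$. Combining with the general twisting principle and using the fact that $\alpha$ is itself a Maurer-Cartan element (which is exactly the hypothesis that $(A_\mu, B_\nu^{l,r}, P)$ is a relative averaging algebra of weight $\lambda$), this is equivalent to $(s^{-1}\pi_\lambda', P')$ being a Maurer-Cartan element of the twisted $L_\infty$-algebra $(s^{-1}\mathfrak{h} \oplus \mathfrak{a}, \{l_k^{(s^{-1}\pi_\lambda, P)}\}_{k=1}^\infty)$. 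This chain of equivalences yields the claim.

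The only genuine content to check is the linearity observation: the map sending a tuple $(\mu, \nu, l, r)$ to $\pi_\lambda$ is linear because $\pi_\lambda$ is defined by evaluating the structure maps on pairs of elements of $A \oplus B$ along the three trees in $T_2$, and each component depends linearly (through $\mu$, $\nu$, $\lambda \nu$, $l$, $r$) on the input data. This is the main (but essentially routine) verification; once it is in hand, everything else is a formal consequence of the previous theorem and the general Maurer-Cartan twisting principle for $L_\infty$-algebras, so no further nontrivial computation is required.
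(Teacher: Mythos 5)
Your argument is correct and is exactly the one the paper implicitly relies on (the paper states this theorem without printed proof, as an immediate consequence of the Maurer--Cartan characterization theorem together with the general twisting principle $\alpha+\alpha'$ is MC iff $\alpha'$ is MC for the twisted structure, both recalled earlier in the section). The linearity of $(\mu,\nu,l,r)\mapsto\pi_\lambda$ for fixed $\lambda$, which you correctly identify as the only point needing verification, holds since each component of $\pi_\lambda$ is one of $\mu$, $l$, $r$, or $\lambda\nu$ evaluated on the inputs.
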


The above proposition says that the $L_\infty$-algebra $(s^{-1} \mathfrak{h} \oplus \mathfrak{a}, \{ l_k^{  (s^{-1} {\pi_\lambda}, P)  } \}_{k=1}^\infty)$ controlls the Maurer-Cartan deformations of the given relative averaging algebra $(A_\mu, B_\nu^{l, r}, {P})$ of weight $\lambda$. For this reason, we call the $L_\infty$-algebra $(s^{-1} \mathfrak{h} \oplus \mathfrak{a}, \{ l_k^{  (s^{-1} {\pi_\lambda}, P)   } \}_{k=1}^\infty)$ as the controlling algebra {for the deformations of} $(A_\mu, B_\nu^{l, r}, P)$.


\section{Cohomology of (relative) averaging algebras of nonzero weight}\label{sec6}
In this section, we define the cohomology of a relative averaging algebra of nonzero weight $\lambda$. In particular, we describe the cohomology of an averaging algebra of weight $\lambda$. Finally, given a relative averaging algebra $(A, B, {P})$ of weight $\lambda$, we find a long exact sequence connecting the cohomology groups of the operator $P$ and the cohomology groups of the full relative averaging algebra $(A, B, {P})$ of weight $\lambda$.

Let $(A, B, {P})$ be a relative averaging algebra of weight $\lambda$ with the corresponding Maurer-Cartan element $(s^{-1} \pi_\lambda, P)$. For each $n \geq 0$, we define the space  $C^n_{\mathrm{rAvg}_\lambda} (A, B , P)$ of $n$-cochains by $C^0_{\mathrm{rAvg}_\lambda} (A, B , P) = 0$, $C^1_{\mathrm{rAvg}_\lambda} (A, B , P) = \mathrm{Hom}(A, A) \oplus \mathrm{Hom}(B,B)$ and 

\begin{align*}
C^{n \geq 2}_{\mathrm{rAvg}_\lambda} (A, B, P) = 
\mathrm{Hom}(A^{\otimes n}, A) \oplus \mathrm{Hom}(B^{\otimes n}, B)   \oplus_{ \substack{k+l = n+1\\ k, l \geq 1}} \mathrm{Hom}(\mathcal{A}^{k, l}, B)  \oplus \mathrm{Hom} ( \mathbb{K} [T_{n-1}] \otimes B^{\otimes n-1}, A).
\end{align*}

\medskip

\begin{remark}
(i) Note that an element $(f, g) \in C^1_{\mathrm{rAvg}_\lambda} (A, B, P)$ corresponds to the element $(s^{-1} (f+g), 0) \in (s^{-1} \mathfrak{h} \oplus \mathfrak{a})_{-1}$.

\medskip


(ii) An element $(f,g,h, \gamma) \in C^{n \geq 2}_{\mathrm{rAvg}_\lambda} (A, B, P)$ gives rise to an element $(s^{-1} ( \pi_{f,g, h} ), \gamma) \in (s^{-1} \mathfrak{h} \oplus \mathfrak{a})_{n-2}$, where $\pi_{f,g,h} \in \mathfrak{h}_{n-1}$ is given by
\begin{align*}
    (\pi_{f,g,h}) (T; (a_1, x_1), \ldots, (a_n, x_n)) = \big(  f(a_1, \ldots, a_n), \lambda^{k-2} (g+h) (a_1, \ldots, a_{i_1 -1}, x_{i_1}, \ldots, x_{i_1 + \cdots + i_{k-1}}, \ldots, a_n)  \big),
\end{align*}
when $T= T^{i_1 -1} \vee \cdots \vee T^{i_k -1}$ for some unique $(i_1 -1)$-tree $T^{i_1 -1} \in T_{i_1 -1}, \ldots, (i_k -1)$-tree $T^{i_k -1} \in T_{i_k -1}$. This is in fact an embedding of the space $C^{n \geq 2}_{\mathrm{rAvg}_\lambda} (A,B,P)$ inside $(s^{-1} \mathfrak{g} \oplus \mathfrak{a})_{n-2}.$
\end{remark}

Using the above embeddings, we define a map $\delta_{\mathrm{rAvg}_\lambda} : C^n_{\mathrm{rAvg}_\lambda} (A, B, P) \rightarrow C^{n+1}_{\mathrm{rAvg}_\lambda} (A, B, P)$ by

\begin{align*}
\delta_{\mathrm{rAvg}_\lambda} (f,g) =~& - l_1^{  (s^{-1} \pi_\lambda, P)} ( s^{-1} ({f} + {g}), 0), \text{ for } (f,g) \in C^1_{\mathrm{rAvg}_\lambda} (A, B, P), \\
\delta_{\mathrm{rAvg}_\lambda} (f,g, h, \gamma) =~& (-1)^{n-2} ~ l_1^{  (s^{-1} \pi_\lambda, P)  } (  s^{-1}  ( \pi_{f,g,h} ), \gamma), \text{ for } (f,g, h, \gamma) \in C^{n \geq 2}_{\mathrm{rAvg}_\lambda} (A, B, P).
\end{align*}

\begin{prop}
With the above notations, we have $(\delta_{\mathrm{rAvg}_\lambda})^2 = 0$. That is, $\{  C^\bullet_{\mathrm{rAvg}_\lambda} (A, B, P) , \delta_{\mathrm{rAvg}_\lambda} \}$ is a cochain complex.
\end{prop}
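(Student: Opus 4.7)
The plan is to deduce the identity $(\delta_{\mathrm{rAvg}_\lambda})^2 = 0$ from the fact that the unary operation of any $L_\infty$-algebra is a differential. Since the preceding theorem shows that $(s^{-1}\pi_\lambda, P)$ is a Maurer-Cartan element of the Voronov $L_\infty$-algebra $(s^{-1}\mathfrak{h} \oplus \mathfrak{a}, \{l_k\}_{k=1}^\infty)$, it follows from general $L_\infty$-theory that $l_1^{(s^{-1}\pi_\lambda, P)}$ is a square-zero operator on the entire twisted complex $(s^{-1}\mathfrak{h} \oplus \mathfrak{a}, \{l_k^{(s^{-1}\pi_\lambda, P)}\}_{k=1}^\infty)$. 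The proof then amounts to realizing $\delta_{\mathrm{rAvg}_\lambda}$ as the restriction of $\pm l_1^{(s^{-1}\pi_\lambda, P)}$ to the subcomplex determined by the embeddings of the cochain groups described in the preceding remark.

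First, I would make those embeddings explicit as inclusions
\[
\iota_1 : C^1_{\mathrm{rAvg}_\lambda}(A,B,P) \hookrightarrow (s^{-1}\mathfrak{h} \oplus \mathfrak{a})_{-1}, \qquad \iota_n : C^{n \geq 2}_{\mathrm{rAvg}_\lambda}(A,B,P) \hookrightarrow (s^{-1}\mathfrak{h} \oplus \mathfrak{a})_{n-2},
\]
sending $(f,g) \mapsto (s^{-1}(f+g), 0)$ and $(f,g,h,\gamma) \mapsto (s^{-1}\pi_{f,g,h}, \gamma)$ respectively. Under these inclusions, the definition of $\delta_{\mathrm{rAvg}_\lambda}$ is tautologically the sign-twisted restriction of $l_1^{(s^{-1}\pi_\lambda, P)}$, provided this restriction actually lands in the image of $\iota_{n+1}$.

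The main technical step, which I expect to be the main obstacle, is exactly this well-definedness: I must verify that if $(s^{-1}\xi, \gamma) \in \mathrm{im}(\iota_n)$, then $l_1^{(s^{-1}\pi_\lambda, P)}(s^{-1}\xi, \gamma) \in \mathrm{im}(\iota_{n+1})$. Using Voronov's formulas, the expansion
\[
l_1^{(s^{-1}\pi_\lambda, P)}(s^{-1}\xi, \gamma) = \sum_{k=0}^{\infty} \tfrac{1}{k!}\, l_{k+1}\big((s^{-1}\pi_\lambda, P),\ldots,(s^{-1}\pi_\lambda, P),(s^{-1}\xi, \gamma)\big)
\]
involves only finitely many non-vanishing terms, since $[[\pi_\lambda, P], P]$-type iterated brackets beyond a small length vanish by the concrete shape of $\pi_\lambda$ and $P$. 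Expanding term by term and using that $\pi_\lambda$ decomposes into the associative multiplication on $A$, the bimodule actions on $B$, and the weighted multiplication on $B$, I would check that the first component of the output remains of the block form $s^{-1}\pi_{f',g',h'}$ for some $(f',g',h')$ in the prescribed Hom-spaces, and that the second component lies in $\mathrm{Hom}(\mathbb{K}[T_n] \otimes B^{\otimes n}, A)$. This is a bookkeeping check on the Gerstenhaber-type brackets $[~,~]$ on $C^\bullet(A \oplus B, A \oplus B)$ using the tree substitution formula (\ref{tri-circ}); the crucial combinatorial fact is that partial composition preserves the direct-sum decomposition according to the pattern of $A$- and $B$-tensor factors.

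Once well-definedness is in place, the conclusion is immediate. The sign factor $(-1)^{n-2}$ in the definition of $\delta_{\mathrm{rAvg}_\lambda}$ is chosen so that
\[
\iota_{n+2} \circ (\delta_{\mathrm{rAvg}_\lambda})^2 = \big(l_1^{(s^{-1}\pi_\lambda, P)}\big)^2 \circ \iota_n,
\]
and the right-hand side is the first $L_\infty$-axiom (i.e.\ $l_1 \circ l_1 = 0$) applied to the twisted $L_\infty$-algebra. Injectivity of $\iota_{n+2}$ then forces $(\delta_{\mathrm{rAvg}_\lambda})^2 = 0$, establishing that $\{C^\bullet_{\mathrm{rAvg}_\lambda}(A,B,P), \delta_{\mathrm{rAvg}_\lambda}\}$ is a cochain complex.
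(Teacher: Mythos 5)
Your proposal is correct and takes essentially the same route as the paper: the paper's own proof is the one-line observation that $(l_1^{(s^{-1}\pi_\lambda,P)})^2=0$ because the twisted structure $(s^{-1}\mathfrak{h}\oplus\mathfrak{a},\{l_k^{(s^{-1}\pi_\lambda,P)}\}_{k=1}^\infty)$ is an $L_\infty$-algebra. The well-definedness check you single out (that $l_1^{(s^{-1}\pi_\lambda,P)}$ preserves the image of the embeddings) is left implicit in the paper, so your write-up is if anything more complete.
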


\begin{proof}
Since the controlling algebra $(s^{-1} \mathfrak{h} \oplus \mathfrak{a} ,    \{ l_k^{(s^{-1} \pi_\lambda, P)}   \}_{k=1}^\infty)$ is a $L_\infty$-algebra, it follows that $(l_1^{(s^{-1} \pi_\lambda, P)})^2 = 0$.
\end{proof}

Let $(A, B, P)$ be a relative averaging algebra of weight $\lambda$. Then the cohomology of the cochain complex  $\{  C^\bullet_{\mathrm{rAvg}_\lambda} (A, B, P) , \delta_{\mathrm{rAvg}_\lambda} \}$ is called the {\bf cohomology} of $(A, B, P)$. We denote the corresponding $n$-th cohomology group by $H^n_{\mathrm{rAvg}_\lambda} (A, B, P)$.

Note that the map $\delta_{\mathrm{rAvg}_\lambda}$ can be described as
\begin{align*}
&\delta_{\mathrm{rAvg}_\lambda} (f,g,h, \gamma) \\
&= (-1)^{n-2}~ l_1^{ (s^{-1} \pi_\lambda , P)} \big(  s^{-1} (\pi_{f,g,h}) ,  \gamma  \big) \\
&= (-1)^{n-2} \sum_{k=0}^\infty \frac{1}{k!} l_{k+1} \big(  \underbrace{     (s^{-1} \pi_\lambda , P), \ldots, (s^{-1} \pi_\lambda , P) }_{k ~ \mathrm{ many}},  (  s^{-1} (\pi_{f,g,h}) ,  \gamma )     \big) \\
&= (-1)^{n-2} \bigg\{ l_2 \big(  (s^{-1} \pi_\lambda, 0), (s^{-1} (  \pi_{f,g,h} ), 0)  \big)  + l_2 \big( (s^{-1} \pi_\lambda, 0), (0, \gamma)  \big) + l_3 \big( (s^{-1} \pi_\lambda, 0), (0,P), (0, \gamma)   \big) \\
&\qquad \qquad + \frac{1}{n!} l_{n+1} \big(  ( s^{-1}   (\pi_{f,g,h}), 0 ),  (0,P), \ldots, (0, P)   \big)  \bigg\} \quad (\text{all other terms are zero}) \\
&= \big(  (-1)^{n-1} s^{-1} [\pi_\lambda, \pi_{f,g,h}] , ~\underbrace{(-1)^{n} [\pi_\lambda, \gamma] + (-1)^n [[\pi_\lambda, P], \gamma]}_{ = (-1)^{n-1} d_P (\gamma) } \\
& \qquad \qquad \qquad \qquad \qquad \qquad \qquad \qquad + \frac{(-1)^n}{n!} [ \cdots [[ \pi_{f,g,h}, P], P], \ldots, P]  \big).
\end{align*}
Thus, using the above identifications, the coboundary map $\delta_{\mathrm{rAvg}_\lambda}$ can be explicitly given by
\begin{align*}
\delta_{\mathrm{rAvg}_\lambda} (f,g,h, \gamma) = \big( \delta_\mathrm{Hoch}^A (f) , \delta^B_\mathrm{Hoch} (g) , \delta_\mathrm{Hoch}^{f,g} (h) , (-1)^{n-1} d_P (\gamma) + \theta_P (f,g,h)   \big).
\end{align*}
Here $\delta_\mathrm{Hoch}^A$  (resp. $\delta_\mathrm{Hoch}^B$) is the Hochschild coboundary map of the associative algebra $A$ (resp. $B$). For any $f \in \mathrm{Hom}(A^{\otimes n}, A)$ and $g \in \mathrm{Hom}(B^{\otimes n}, B)$, the map 
\begin{align*}
\delta^{f,g}_\mathrm{Hoch} : \oplus_{\substack{ k+l=n \\ k, l \geq 1}} \mathrm{Hom}(\mathcal{A}^{k,l}, B) \rightarrow \oplus_{\substack{ k+l=n+1 \\ k, l \geq 1}} \mathrm{Hom}(\mathcal{A}^{k,l}, B) ~ \text{ is given by }
\end{align*}
\begin{align*}
\big( \delta_\mathrm{Hoch}^{f,g} (h) \big) (a_1, \ldots, a_{n+1}) =~& (\nu + l+r) (a_1, (f+g+ h)(a_2, \ldots, a_{n+1})) \\
&+ \sum_{i=1}^n (-1)^i (h +\nu) \big( a_1, \ldots, a_{i-1} , (\mu + \nu + l + r) (a_i, a_{i+1}), \ldots, a_{n+1}  \big) \\
&+ (-1)^{n+1} (\nu + l+ r) ( (f+g+h)(a_1, \ldots, a_n) , a_{n+1}),
\end{align*}
for $a_1 \otimes \cdots \otimes a_{n+1} \in \mathcal{A}^{k,l}$ with $k+l = n+1$ and $k, l \geq 1$. The map $d_P$ is the coboundary operator of the relative averaging operator $P$ of weight $\lambda$. Finally, for any $T \in T_n$ (say $T= T^{i_1 -1} \vee \cdots \vee T^{i_k -1}$ for some unique $(i_1 -1)$-tree $T^{i_1 -1} \in T_{i_1 -1}, \ldots, (i_k -1)$-tree $T^{i_k -1} \in T_{i_k -1}$), the map $\theta_P (f,g,h) \in \mathrm{Hom}(\mathbb{K}[T_n]\otimes B^n, A)$ is given by
\begin{align*}
&(\theta_P (f,g,h)) (T; x_1, \ldots, x_n) 
= (-1)^n \big\{ f\big( P(x_1), \ldots, P(x_n) \big) \\ & \qquad - \lambda^{k-2} P(g+ h) \big(P(x_1), \ldots, P(x_{i_1 -1}),  x_{i_1}, P(x_{i_1 + 1}),\ldots, x_{i_1 + \cdots + i_{k-1}} , \ldots , P(x_n)  \big)   \big\}.
\end{align*}


\medskip

Let $A$ be an associative algebra and $B$ be an associative $A$-bimodule. Then one may consider the cochain complex $\{ C^\bullet_\mathrm{AssAct} (A, B), \delta_\mathrm{AssAct} \}$, where $C^0_\mathrm{AssAct} (A, B) = 0$, $C^1_\mathrm{AssAct} (A, B) = \mathrm{Hom}(A,A) 
 \oplus \mathrm{Hom} (B, B)$ and
\begin{align*}
C^{n \geq 2}_\mathrm{AssAct} (A, B) = \mathrm{Hom}(A^{\otimes n}, A) \oplus \mathrm{Hom}(B^{\otimes n}, B)   \oplus_{ \substack{k+l = n+1\\ k, l \geq 1}} \mathrm{Hom}(\mathcal{A}^{k, l}, B).
\end{align*}
The coboundary map $\delta_\mathrm{AssAct} : C^n_\mathrm{AssAct} (A, B) \rightarrow C^{n+1}_\mathrm{AssAct} (A, B) $ is given by
\begin{align*}
\delta_\mathrm{AssAct} (f,g, h) = (\delta_\mathrm{Hoch}^A (f) , \delta_\mathrm{Hoch}^B (g), \delta_\mathrm{Hoch}^{A, B} (h)).
\end{align*}
We denote the $n$-th cohomology of the cochain complex $\{ C^\bullet_\mathrm{AssAct} (A, B), \delta_\mathrm{AssAct} \}$ by $H^n_\mathrm{AssAct} (A, B)$.

\begin{thm}
Let $(A, B, P)$ be a relative averaging algebra of weight $\lambda$. Then there is a long exact sequence
\begin{align}\label{long-exc}
\cdots \longrightarrow H^{n-1}_P (B, A) \longrightarrow H^n_\mathrm{ rAvg_\lambda} (A, B, P) \longrightarrow H^n_\mathrm{AssAct} (A, B) \longrightarrow H^n_P (B,A) \longrightarrow \cdots.
\end{align}
\end{thm}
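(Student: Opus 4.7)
The plan is to construct a short exact sequence of cochain complexes
\[
0 \longrightarrow C^{\bullet-1}_P(B, A) \xrightarrow{\iota} C^\bullet_{\mathrm{rAvg}_\lambda}(A, B, P) \xrightarrow{\pi} C^\bullet_\mathrm{AssAct}(A, B) \longrightarrow 0
\]
and then obtain the desired long exact sequence by invoking the standard zigzag (snake) lemma in homological algebra.

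I define the maps $\iota_n : C^{n-1}_P(B, A) \to C^n_{\mathrm{rAvg}_\lambda}(A, B, P)$ by $\iota_n(\gamma) = (0, 0, 0, \gamma)$ and $\pi_n : C^n_{\mathrm{rAvg}_\lambda}(A, B, P) \to C^n_\mathrm{AssAct}(A, B)$ by $\pi_n(f, g, h, \gamma) = (f, g, h)$. Exactness at each degree is immediate from the direct sum decomposition $C^n_{\mathrm{rAvg}_\lambda}(A, B, P) \cong C^n_\mathrm{AssAct}(A, B) \oplus C^{n-1}_P(B, A)$ built into the definition of the $\mathrm{rAvg}_\lambda$-cochains. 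Next, I check that $\iota$ is a chain map (up to a global sign that can be absorbed into the definition of $\iota_n$): setting $f = g = h = 0$ in the explicit formula for $\delta_{\mathrm{rAvg}_\lambda}$ and using $\theta_P(0, 0, 0) = 0$, which is immediate from the formula defining $\theta_P$, yields $\delta_{\mathrm{rAvg}_\lambda}(0, 0, 0, \gamma) = (0, 0, 0, (-1)^{n-1} d_P(\gamma))$, hence $\delta_{\mathrm{rAvg}_\lambda} \circ \iota_n = (-1)^{n-1} \iota_{n+1} \circ d_P$. The chain-map identity $\pi_{n+1} \circ \delta_{\mathrm{rAvg}_\lambda} = \delta_\mathrm{AssAct} \circ \pi_n$ follows because the first three components of $\delta_{\mathrm{rAvg}_\lambda}(f, g, h, \gamma)$ are precisely $(\delta^A_\mathrm{Hoch}(f), \delta^B_\mathrm{Hoch}(g), \delta^{f, g}_\mathrm{Hoch}(h))$, and this coincides with $\delta_\mathrm{AssAct}(f, g, h) = (\delta^A_\mathrm{Hoch}(f), \delta^B_\mathrm{Hoch}(g), \delta^{A, B}_\mathrm{Hoch}(h))$ upon unpacking the combinatorial decomposition of inputs drawn from the various spaces $\mathcal{A}^{k, l}$.

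With the short exact sequence of complexes in hand, the zigzag lemma immediately delivers the required long exact sequence in cohomology. Moreover, the connecting homomorphism $\partial : H^n_\mathrm{AssAct}(A, B) \to H^n_P(B, A)$ admits the explicit description: for a cocycle representative $(f, g, h)$ of a class in $H^n_\mathrm{AssAct}$, lift it along $\pi$ to $(f, g, h, 0) \in C^n_{\mathrm{rAvg}_\lambda}$; since $(f, g, h)$ is a cocycle the first three components of $\delta_{\mathrm{rAvg}_\lambda}(f, g, h, 0)$ vanish, leaving only $(0, 0, 0, \theta_P(f, g, h))$, and pulling back along $\iota$ gives $\partial[(f, g, h)] = [\theta_P(f, g, h)]$. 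The one delicate point in the whole argument is the chain-map verification for $\pi$, i.e., the identification of $\delta^{f, g}_\mathrm{Hoch}(h)$ with $\delta^{A, B}_\mathrm{Hoch}(h)$ appearing in $\delta_\mathrm{AssAct}$; this reduces to bookkeeping on the explicit formulas already established, checking how the $(\mu + \nu + l + r)$ sub-multiplications act on inputs in each $\mathcal{A}^{k, l}$.
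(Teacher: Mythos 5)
Your proposal is correct and follows essentially the same route as the paper, which likewise obtains the long exact sequence from the short exact sequence of complexes $0 \to \{C^{\bullet-1}(B,A), d_P\} \to \{C^\bullet_{\mathrm{rAvg}_\lambda}(A,B,P), \delta_{\mathrm{rAvg}_\lambda}\} \to \{C^\bullet_{\mathrm{AssAct}}(A,B), \delta_{\mathrm{AssAct}}\}\to 0$ and the zigzag lemma; the paper simply calls this sequence ``obvious'' and omits the verifications you supply. Your extra care with the sign $(-1)^{n-1}$ in the inclusion and your explicit identification of the connecting map as $[(f,g,h)] \mapsto [\theta_P(f,g,h)]$ are correct refinements of the same argument.
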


\begin{proof}
There is an obvious short exact sequence of cochain complexes
\begin{align*}
     0 \longrightarrow \{ C^{\bullet -1} (B, A), d_P \} \longrightarrow \{ C^\bullet_\mathrm{ rAvg_\lambda} (A, B, P) , \delta_\mathrm{rAvg_\lambda} \} \longrightarrow \{     C^\bullet_\mathrm{AssAct} (A, B), \delta_\mathrm{AssAct}   \} \longrightarrow 0.
\end{align*}
This yields the long exact sequence (\ref{long-exc}) on the level of cohomology.
\end{proof}

\subsection{Cohomology of an averaging algebra of any nonzero weight}

In this subsection, we introduce the cohomology of an averaging algebra of weight $\lambda$ using the general theory of relative averaging algebras. Subsequently, we also find a long exact sequence connecting cohomology groups.

Let $(A, P)$ be an averaging algebra of nonzero weight $\lambda$. We define $C^0_{\mathrm{Avg}_\lambda} (A,P) = 0$, $C^1_{\mathrm{Avg}_\lambda} (A,P) = \mathrm{Hom}(A,A)$ and 
\begin{align*}
C^{n \geq 2}_{\mathrm{Avg}_\lambda} (A,P) = \mathrm{Hom}(A^{\otimes n}, A) \oplus \mathrm{Hom}( \mathbb{K}[T_{n-1}] \otimes A^{\otimes n-1}, A).
\end{align*}
Note that the averaging algebra $(A, P)$ of weight $\lambda$ can be seen as a relative averaging algebra $(A, A , {P})$ of weight $\lambda$ (see Remark \ref{avg-ravg}). Then for each $n \geq 0$, there is an embedding
\begin{align*}
i: C^n_{\mathrm{Avg}_\lambda} (A, P) \hookrightarrow C^n_{\mathrm{rAvg}_\lambda} (A, A, P), ~ (f, \gamma) \mapsto (f, f, f, \gamma).
\end{align*}
Further, it can be easily checked that $\delta_{\mathrm{rAvg}_\lambda} (\mathrm{im} (i)) \subset \mathrm{im}(i)$. This implies that the coboundary map $\delta_{\mathrm{rAvg}_\lambda} : C^n_{\mathrm{rAvg}_\lambda} (A,A,P) \rightarrow C^{n+1}_{\mathrm{rAvg}_\lambda} (A,A,P)$ induces a coboundary map (which we denote by $\delta_{\mathrm{Avg}_\lambda}$)
\begin{align*}
 \delta_{\mathrm{Avg}_\lambda} : C^n_{\mathrm{Avg}_\lambda} (A,P) \rightarrow C^{n+1}_{\mathrm{Avg}_\lambda} (A,P) \text{ for } n \geq 0.
\end{align*}
The map $\delta_{\mathrm{Avg}_\lambda}$ is explicitly given by
\begin{align*}
\delta_{\mathrm{Avg}_\lambda} (f, \gamma) = (\delta_\mathrm{Hoch}^A (f), (-1)^{n-1} d_P (\gamma) + \theta_P (f) ) , \text{ for } (f, \gamma) \in C^n_{\mathrm{Avg}_\lambda} (A,P).
\end{align*}
The cohomology of the cochain complex $\{ C^\bullet_{\mathrm{Avg}_\lambda} (A,P), \delta_{\mathrm{Avg}_\lambda} \}$ is called the cohomology of the relative averaging algebra $(A, P)$ of weight $\lambda$. We denote the corresponding $n$-th cohomology group by $H^n_{\mathrm{Avg}_\lambda}(A, P)$.

\begin{thm}
Let $(A, P)$ be a relative averaging algebra of weight $\lambda$. Then there is a long exact sequence
\begin{align*}
\cdots \longrightarrow H^{n-1}_P (A,A) \longrightarrow H^n_{\mathrm{Avg}_\lambda} (A, P) \longrightarrow H^n_\mathrm{Hoch}(A) \rightarrow H^n_P (A,A) \longrightarrow \cdots .
\end{align*}
Here ${H^n_P (A,A)}$ is the $n$-th cohomology group of the averaging operator $P$ of weight $\lambda$ and $H^n_\mathrm{Hoch}(A)$ is the $n$-th Hochschild cohomology group of the associative algebra $A$.
\end{thm}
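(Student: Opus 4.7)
The plan is to mirror the proof of the analogous long exact sequence established in the relative case. That proof proceeded by exhibiting a short exact sequence of cochain complexes and then passing to the induced zigzag long exact sequence in cohomology. I will do the same here, working inside the cochain complex $\{C^\bullet_{\mathrm{Avg}_\lambda}(A,P),\delta_{\mathrm{Avg}_\lambda}\}$ already constructed.

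First I would isolate the subcomplex consisting of cochains of the form $(0,\gamma)$ with $\gamma\in \mathrm{Hom}(\mathbb{K}[T_{n-1}]\otimes A^{\otimes n-1},A) = C^{n-1}(A,A)$. Using the explicit formula
\[
\delta_{\mathrm{Avg}_\lambda}(f,\gamma) = \bigl(\delta^A_{\mathrm{Hoch}}(f),\;(-1)^{n-1}d_P(\gamma)+\theta_P(f)\bigr),
\]
and the observation $\theta_P(0)=0$, this subspace is closed under $\delta_{\mathrm{Avg}_\lambda}$, which acts on it as $(0,\gamma)\mapsto (0,(-1)^{n-1}d_P\gamma)$. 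The sign $(-1)^{n-1}$ is harmless for computing cohomology, so the subcomplex is isomorphic (as a cochain complex) to the degree-shifted complex $\{C^{\bullet-1}(A,A),d_P\}$ from Section \ref{sec4}, whose cohomology is precisely $H^{\bullet-1}_P(A,A)$.

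Next I would check that the quotient complex is canonically the Hochschild complex of $A$. The projection $(f,\gamma)\mapsto f$ from $C^n_{\mathrm{Avg}_\lambda}(A,P)$ to $\mathrm{Hom}(A^{\otimes n},A)=C^n_{\mathrm{Hoch}}(A)$ has kernel exactly the subcomplex just described, and the explicit formula for $\delta_{\mathrm{Avg}_\lambda}$ shows that this projection intertwines $\delta_{\mathrm{Avg}_\lambda}$ and $\delta^A_{\mathrm{Hoch}}$. Hence we obtain a short exact sequence of cochain complexes
\begin{equation*}
0 \longrightarrow \{C^{\bullet-1}(A,A),d_P\} \longrightarrow \{C^\bullet_{\mathrm{Avg}_\lambda}(A,P),\delta_{\mathrm{Avg}_\lambda}\} \longrightarrow \{C^\bullet_{\mathrm{Hoch}}(A),\delta^A_{\mathrm{Hoch}}\} \longrightarrow 0.
\end{equation*}
The standard zigzag lemma then produces the desired long exact sequence in cohomology.

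There is no serious obstacle: everything reduces to bookkeeping with the explicit coboundary formula for $\delta_{\mathrm{Avg}_\lambda}$ recorded just before the theorem. The only point requiring a moment of care is the sign $(-1)^{n-1}$ appearing in the action of $\delta_{\mathrm{Avg}_\lambda}$ on $(0,\gamma)$; this is precisely the same sign discrepancy already handled in the proof of the previous theorem, and one verifies that it yields an isomorphism of cochain complexes $\{(0,\gamma)\mid\gamma\in C^{n-1}(A,A)\}\cong \{C^{\bullet-1}(A,A),d_P\}$ by the obvious rescaling, so the cohomology groups coincide with $H^{\bullet-1}_P(A,A)$ as claimed.
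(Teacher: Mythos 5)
Your proposal is correct and is exactly the argument the paper intends: the theorem is stated without proof precisely because it follows by the same short-exact-sequence-of-complexes argument used for the relative case, namely $0 \to \{C^{\bullet-1}(A,A),d_P\} \to \{C^\bullet_{\mathrm{Avg}_\lambda}(A,P),\delta_{\mathrm{Avg}_\lambda}\} \to \{C^\bullet_{\mathrm{Hoch}}(A),\delta^A_{\mathrm{Hoch}}\} \to 0$ followed by the zigzag lemma. Your handling of the sign $(-1)^{n-1}$ and the observation that $\theta_P$ vanishes on the zero cochain are the only points needing care, and you address both correctly.
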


\section{Infinitesimal deformations of averaging algebras}\label{sec7}
In this section, we study infinitesimal deformations of a relative averaging algebra of weight $\lambda$. In particular, we show that the set of all equivalence classes of infinitesimal deformations of a relative averaging algebra $(A,B,P)$ of weight $\lambda$ has a bijection with the second cohomology group $H^2_\mathrm{rAvg_\lambda} (A,B, P)$.

Let $(A_\mu, B_\nu^{l,r}, P)$ be a relative averaging algebra of weight $\lambda$. Consider the space $A[[t]]/(t^2)$ (resp. $B[[t]]/(t^2)$) whose elements are of the form $a + t b$, for $a, b \in A$ (resp. $x+ty$, for $x,y \in B$). Then $A[[t]]/(t^2)$ and $B[[t]]/(t^2)$ are both $\mathbb{K}[[t]]/(t^2)$-modules.

\begin{defn}
An {\bf infinitesimal deformation} of a relative averaging algebra $(A_\mu, B_\nu^{l,r}, P)$ of weight $\lambda$ is given by a quintuple $(\mu_t, \nu_t, l_t, r_t, P_t)$ in which
\begin{align*}
    \mu_t = \mu + t \mu_1, \quad \nu_t = \nu + t \nu_1, \quad l_t = l + t l_1, \quad r_t = r+ t r_1  \text{ and } P_t = P+ tP_1
\end{align*}
such that $\big( A[[t]]/(t^2) \big)_{\mu_t}$ is an associative algebra, $\big( B[[t]]/(t^2) \big)_{\nu_t}^{l_t, r_t}$ is an associative  $\big( A[[t]]/(t^2) \big)_{\mu_t}$-bimodule over the ring $\mathbb{K}[[t]]/(t^2)$ and the $\mathbb{K}[[t]]/(t^2)$-linear map $P_t : B[[t]]/(t^2) \rightarrow A[[t]]/(t^2)$ is a relative averaging operator of weight $\lambda$. In other words, $ \big( \big( A[[t]]/(t^2) \big)_{\mu_t},   \big( B[[t]]/(t^2) \big)_{\nu_t}^{l_t, r_t}, P_t     \big) $ is a relative averaging algebra of weight $\lambda$ over the ring $\mathbb{K}[[t]]/(t^2)$.
\end{defn}

Let $(\mu_t, \nu_t, l_t, r_t, P_t)$ be an infinitesimal deformation of a relative averaging algebra $(A,B,P)$ of weight $\lambda$. Then it follows from Theorem \ref{sum-l-inf} that $(\mu_1, \nu_1, l_1 + r_1, P_1) \in Z^2_{\mathrm{rAvg}_\lambda} (A,B,P)$ is a $2$-cocycle.

\begin{defn}
    Let $(\mu_t, \nu_t, l_t, r_t, P_t)$ and $(\mu'_t, \nu'_t, l'_t, r'_t, P'_t)$ be two infinitesimal deformations of a relative averaging algebra $(A, B,P)$ of weight $\lambda$. These two infinitesimal deformations are said to be {\bf equivalent} if there exist linear maps $\varphi_1: A \rightarrow A$ and $\psi_1: B \rightarrow B$ such that the pair $ (\varphi_t = \mathrm{id}_A + t \varphi_1, \psi_t = \mathrm{id}_B + t \psi_1)$  defines a morphism of relative averaging algebras of weight $\lambda$ from
    \begin{align*}
         \big(  \big( A[[t]]/(t^2) \big)_{\mu_t},   \big( B[[t]]/(t^2) \big)_{\nu_t}^{l_t, r_t}, P_t     \big) ~ \text{ to } ~ \big(  \big( A[[t]]/(t^2) \big)_{\mu'_t},   \big( B[[t]]/(t^2) \big)_{\nu'_t}^{l'_t, r'_t}, P'_t \big).
    \end{align*}  .
\end{defn}

It follows from the above definition that $(\mu_t, \nu_t, l_t, r_t, P_t)$ and  $(\mu_t', \nu_t', l_t', r_t', P_t')$ are equivalent if the following conditions are hold modulo $t^2$:
\begin{align*}
    \varphi_t (\mu_t (a, b)) =~& \mu_t' (   \varphi_t (a) , \varphi_t (b)), \\
    \psi_t (\nu_t (x, y) ) =~& \nu_t' ( \psi_t (x), \psi_t (y) ), \\
    \psi_t (l_t (a, x)) =~& l_t' (   \varphi_t (a) , \psi_t (x)),\\
    \psi_t ( r_t (x, a)) =~& r_t' (\psi_t (x), \varphi_t (a)), \\
    \varphi_t \circ P_t =~& P_t' \circ \psi_t,
\end{align*}
for all $a, b \in A$ and $x, y \in B$. Comparing the coefficients of $t$ on both sides of the above identities, we get
\begin{align*}
    \mu_1 - \mu_1' = \delta_\mathrm{Hoch}^A (\varphi_1), \quad & \nu_1 - \nu_1' = \delta_\mathrm{Hoch}^B (\psi_1), \quad (l_1 + r_1) - (l_1' + r_1') = \delta_\mathrm{Hoch}^{\varphi_1, \psi_1} (0),\\
   & P_1 - P_1' =  \theta_P (\varphi_1, \psi_1, 0).
\end{align*}
Thus, we obtain that 
\begin{align*}
     (\mu_1, \nu_1, l_1 + r_1, P_1) - (\mu_1', \nu_1' , l_1' + r_1' , P_1') = \delta_{\mathrm{rAvg}_\lambda} (\varphi_1, \psi_1, 0).
\end{align*}
This shows that the $2$-cocycles  $(\mu_1, \nu_1, l_1 + r_1, P_1)$ and  $(\mu_1', \nu_1' , l_1' + r_1' , P_1')$ are cohomologous. Hence they induce same element in $H^2_{\mathrm{rAvg}_\lambda} ( A, B, P)$. Therefore, we obtain a map
\begin{align*}
    \Lambda : (\text{infinitesimal deformations of }  (A, B, P) )/ \sim ~ \rightarrow H^2_{\mathrm{rAvg}_\lambda} (A,B,P).
\end{align*}
In the following result, we will show that the map $\Lambda$ is a bijection.

\begin{thm}
Let $(A,B,P)$ be a relative averaging algebra of weight $\lambda$. Then there is a bijection between the set of all equivalence classes of infinitesimal deformations of $(A,B,P)$ and the second cohomology groups $H^2_{\mathrm{rAvg}_\lambda} (A,B,P)$.
\end{thm}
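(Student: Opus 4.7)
The plan is to establish that $\Lambda$ is both injective and surjective; well-definedness has already been shown in the discussion preceding the theorem statement (both that the first-order term of a deformation satisfies the $2$-cocycle condition, via Theorem 5.13 read modulo $t^2$, and that equivalent deformations differ by a coboundary).

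For surjectivity, I start with an arbitrary $2$-cocycle $(\mu_1, \nu_1, l_1+r_1, P_1) \in Z^2_{\mathrm{rAvg}_\lambda}(A,B,P)$, where the middle entry naturally splits as the sum of $l_1: A \otimes B \to B$ and $r_1 : B \otimes A \to B$, and define the quintuple $\mu_t = \mu + t\mu_1$, $\nu_t = \nu + t\nu_1$, $l_t = l + t l_1$, $r_t = r + t r_1$, $P_t = P + t P_1$. Using the explicit description of the coboundary worked out above, the vanishing of $\delta_{\mathrm{rAvg}_\lambda}$ on this cocycle unpacks componentwise into $\delta^A_{\mathrm{Hoch}}(\mu_1)=0$, $\delta^B_{\mathrm{Hoch}}(\nu_1)=0$, $\delta^{\mu_1,\nu_1}_{\mathrm{Hoch}}(l_1+r_1)=0$, and $(-1)^{n-1} d_P(P_1) + \theta_P(\mu_1,\nu_1,l_1+r_1)=0$; these are exactly the conditions that $\mu_t$ is associative, $\nu_t$ is associative, $B_{\nu_t}^{l_t,r_t}$ is an $A_{\mu_t}$-bimodule, and $P_t$ is a relative averaging operator of weight $\lambda$ for the deformed structures, all modulo $t^2$. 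Thus the quintuple is a genuine infinitesimal deformation whose image under $\Lambda$ is the given cohomology class.

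For injectivity, I consider two infinitesimal deformations $(\mu_t,\nu_t,l_t,r_t,P_t)$ and $(\mu_t',\nu_t',l_t',r_t',P_t')$ whose first-order cocycles differ by $\delta_{\mathrm{rAvg}_\lambda}(\varphi_1,\psi_1)$ for some $(\varphi_1,\psi_1) \in C^1_{\mathrm{rAvg}_\lambda}(A,B,P)$, and define $\varphi_t = \mathrm{id}_A + t\varphi_1$ and $\psi_t = \mathrm{id}_B + t\psi_1$. I then verify the five equivalence conditions modulo $t^2$: each reduces, after subtracting the identity part, to matching a first-order difference ($\mu_1-\mu_1'$, $\nu_1-\nu_1'$, $l_1-l_1'$, $r_1-r_1'$, $P_1-P_1'$) with the corresponding component of $\delta_{\mathrm{rAvg}_\lambda}(\varphi_1,\psi_1)$. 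The first two matches follow from the definition of the ordinary Hochschild coboundary; the left and right action conditions follow from the formula for $\delta^{\varphi_1,\psi_1}_{\mathrm{Hoch}}$; and the condition $\varphi_t \circ P_t \equiv P_t' \circ \psi_t \pmod{t^2}$ yields $P_1 - P_1' = P\circ \psi_1 - \varphi_1 \circ P$, which must be identified with the fourth component of $\delta_{\mathrm{rAvg}_\lambda}(\varphi_1,\psi_1)$. The hardest step, and the only genuine obstacle, is this last identification: one must trace the $L_\infty$-twisted differential $l_1^{(s^{-1}\pi_\lambda,P)}$ applied to the degree $-1$ element $(s^{-1}(\varphi_1 + \psi_1), 0)$, extract the arity-$1$ incarnation of $\theta_P(\varphi_1,\psi_1,0)$ on the unique tree in $T_1$ (which, via the grafting decomposition $| \vee |$ with $k=2$ and $\lambda^{k-2}=1$, collapses to precisely $P\circ\psi_1 - \varphi_1\circ P$), and check the sign conventions; once this is done, all five equivalence conditions hold, exhibiting the two deformations as equivalent and completing the proof.
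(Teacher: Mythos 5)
Your proposal is correct and follows essentially the same route as the paper: the paper packages surjectivity and injectivity as the construction of an explicit inverse map $\Upsilon$ (sending a $2$-cocycle $(\mu_1,\nu_1,\beta_1,\gamma_1)$ to the deformation obtained by splitting $\beta_1$ into $l_1$ and $r_1$, and sending cohomologous cocycles to equivalent deformations via $(\mathrm{id}_A+t\varphi_1,\mathrm{id}_B+t\psi_1)$), which is exactly the content of your two halves. Your identification of the fourth component of the coboundary of $(\varphi_1,\psi_1)$ with $P\circ\psi_1-\varphi_1\circ P$ also matches the paper's formula $\theta_P(\varphi_1,\psi_1,0)$ evaluated on the unique $1$-tree.
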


\begin{proof}
    Here we will construct the inverse map of $\Lambda$. Let $(\mu_1, \nu_1, \beta_1, \gamma_1) \in Z^2_{\mathrm{aAvg}_\lambda} (A, B, P)$ be a $2$-cocycle. We define maps $l_1 : A \otimes B \rightarrow B$ and $r_1: B \otimes A \rightarrow A$ by
    \begin{align*}
        l_1 (a, x) = \beta_1 (a, x) ~~~~ \text{ and } ~~~~ r_1 (x, a) = \beta_1 (x,a), \text{ for } a \in A, x \in B.
    \end{align*}
    Then it is easy to verify that the quintuple $(\mu_1, \nu_t, l_t, r_t, P_t)$ is an infinitesimal deformation, where
    \begin{align*}
    \mu_t = \mu + t \mu_1, \quad \nu_t = \nu + t \nu_1, \quad l_t = l + t l_1, \quad r_t = r+ t r_1  \text{ and } P_t = P+ t \gamma_1.
\end{align*}
Further, if $(\mu_1, \nu_1, \beta_1, \gamma_1)$ and $(\mu'_1, \nu'_1, \beta'_1, \gamma'_1)$ are two cohomologous $2$-cocycles, say 
\begin{align*}
    (\mu_1, \nu_1, \beta_1, \gamma_1) - (\mu_1, \nu_1, \beta_1, \gamma_1) = \delta_{\mathrm{rAvg}_\lambda} (\varphi_1, \psi_1, 0),
\end{align*}
then the corresponding infinitesimal deformations $(\mu_t, \nu_t, l_t, r_t, P_t)$ and $(\mu'_t, \nu'_t, l'_t, r'_t, P'_t)$ are equivalent by the pair of maps $(\varphi_t = \mathrm{id}_A + t \varphi_1, \psi_t = \mathrm{id}_B + t \psi_1)$. Thus, we obtain a map 
\begin{align*}
\Upsilon : H^2_{\mathrm{rAvg}_\lambda} (A, B, P) \rightarrow (\text{infinitesimal deformations of } (A, B, P) )/ \sim.
\end{align*}
Finally, the maps $\Lambda$ and $\Upsilon$ are inverses to each other. This completes the proof.
\end{proof}

\medskip

\section{Homotopy triassociative algebras and homotopy relative averaging operators of nonzero weight}\label{sec8}

In this section, we first introduce the notion of $Triass_\infty$-algebras (strongly homotopy triassociative algebras) and show that any $Triass_\infty$-algebra naturally gives three $A_\infty$-algebra structures. Next, given an $A_\infty$-algebra and an {$A_\infty$-representation}, we define the notion of a homotopy relative averaging operator of any nonzero weight $\lambda$. Finally, we show that such a homotopy operator naturally induces a $Triass_\infty$-algebra structure (which generalizes Proposition \ref{ravg-ind-triass} in the homotopy context).

The notion of homotopy algebras ($A_\infty$-algebras) first appeared in the work of J. Stasheff in the study of topological loop spaces \cite{stas}. See also \cite{keller,lada-markl} for some details.

\begin{defn}
An {\bf $A_\infty$-algebra} is a pair $(A = \oplus_{i \in \mathbb{Z}} A_i, \{ \mu_k \}_{k=1}^\infty)$ consisting of a graded vector space $A = \oplus_{i \in \mathbb{Z}} A_i$ equipped with a collection $\{ \mu_k : A^{\otimes k} \rightarrow A \}_{k =1}^\infty$ of degree $1$ graded linear maps satisfying the following set of identities:
\begin{align}\label{a-inff}
\sum_{k+l = n+1} \sum_{i=1}^k (-1)^{|a_1| + \cdots + |a_{i-1}|}~ \mu_k \big( a_1, \ldots, a_{i-1} , \mu_l ( a_i, \ldots, a_{i+l-1}), a_{i+l},  \ldots, a_n   \big) = 0,
\end{align}
for all $n \geq 1$ and homogeneous elements $a_1, \ldots, a_n \in A.$
\end{defn}

\begin{defn}
Let $(A, \{ \mu_k \}_{k=1}^\infty)$ be an $A_\infty$-algebra. An {\bf $A_\infty$-representation} of $(A, \{ \mu_k \}_{k=1}^\infty)$ is an $A_\infty$-algebra $(B, \{ \nu_k \}_{k=1}^\infty)$ together with a collection
\begin{align*}
    \{ \eta_k : \mathcal{A}^{k-1,1} \oplus \mathcal{A}^{k-2,2} \oplus \cdots \oplus \mathcal{A}^{1, k-1} \rightarrow B \}_{k=1}^\infty
\end{align*}
of degree $1$ graded linear maps satisfying the identities (\ref{a-inff}) with some (but atmost $n-1$ many) of the elements among $a_1, \ldots, a_n$ is from $B$ and the corresponding operation $\mu_k$ (resp. $\mu_l$) is replaced by $\nu_k$ or $\eta_k$ (resp. $\nu_l$ or $\eta_l$). We often denote an $A_\infty$-representation as above by $(B, \{ \nu_k \}_{k=1}^\infty, \{ \eta_k \}_{k=1}^\infty)$. Like nonhomotopic case, here $\mathcal{A}^{k,l}$ is the direct sum of all possible $(k+l)$ tensor powers of $A$ and $B$ in which $A$ appears $k$ times and $B$ appears $l$ times.
\end{defn}

It follows from the above definition that any $A_\infty$-algebra $(A, \{ \mu_k \}_{k=1}^\infty)$ is obviously an $A_\infty$-representation of itself, where $\eta_k = \mu_k$ for $k \geq 1$. This is called the adjoint $A_\infty$-representation.

\begin{defn}
A {\bf $Triass_\infty$-algebra} is a graded vector space $D= \oplus_{i \in \mathbb{Z}} D_i$ equipped with a collection $\{ \pi_k : \mathbb{K} [T_k] \otimes D^{\otimes k} \rightarrow D \}_{k=1}^\infty$ of degree $1$ graded linear maps satisfying the following set of identities:
\begin{align}\label{tri-inf-iden}
\sum_{k+l = n+1} \sum_{i=1}^k (-1)^{|x_1| + \cdots + |x_{k-1}|} ~ \pi_k \big(  R^{k; i, l}_0 (T) ; x_1, \ldots, x_{i-1}, \pi_l \big( R^{k; i, l}_i (T); x_1, \ldots, x_{i+l-1} ), x_{i+l}, \ldots, x_n \big) = 0,
\end{align}
for all $n \geq 1$, any $T \in T_n$ and homogeneous elements $x_1, \ldots, x_n \in D.$
\end{defn}

\begin{remark}
(i) Let $(D, \{ \pi_k \}_{k=1}^\infty)$ be a $Triass_\infty$-algebra. Since the set $Y_n$ of all planar binary $n$-trees is contained in the set $T_n$ of all $n$-trees, the identities in (\ref{tri-inf-iden}) are naturally held when we consider the trees of $Y_n$. Thus the identities of $Diass_\infty$-algebra (strongly homotopy dissociative algebra) follow. In other words, any $Triass_\infty$-algebra is intrinsically a $Diass_\infty$-algebra. This generalizes the similar result from the nonhomotopic case.

\medskip

(ii) For each $n \geq 1$, the set $T_n$ has three distinguished trees, namely,\\

\begin{center}
\begin{tikzpicture}[scale=0.15]
\draw (0,0) -- (0,-2); \draw (0,0) -- (-3,3); \draw (0,0) -- (3,3); \draw (0.7,0.7) -- (-1.6,3); \draw  (2.3,2.3) -- (1.6,3) ; \draw (1.7,1.7) -- (0.4, 3); \draw (-0.9, 3) -- (-1, 3); \draw (-0.2, 3) -- (-0.3, 3);
\end{tikzpicture} (call it $T^\dashv_n$) \qquad  \quad \begin{tikzpicture}[scale=0.15]
\draw (0,0) -- (0,-2); \draw (0,0) -- (-3,3); \draw (0,0) -- (3,3); \draw (-0.7,0.7) -- (1.6,3); \draw  (-2.3,2.3) -- (-1.6,3) ; \draw (-1.7,1.7) -- (-0.4, 3); \draw (0.9, 3) -- (1, 3); \draw (0.2, 3) -- (0.3, 3);
\end{tikzpicture} (call it $T^\vdash_n$)  \qquad  \quad
 \begin{tikzpicture}[scale=0.15]
\draw (0,0) -- (0,-2); \draw (0,0) -- (-3,3); \draw (0,0) -- (3,3); \draw (0,0) -- (1.5,3); \draw (0,0) -- (-1.5,3); \draw (-0.5,3) -- (-0.4,3); \draw (0.4, 3) -- (0.5,3);
\end{tikzpicture}  (call it $T^\perp_n$).\\
\end{center}
If we apply the maps $d_i : T_n \rightarrow T_{n-1}$ (for any $0 \leq i \leq n$) to the above distinguished trees, one get $d_i (T_n^{\dashv}) = T_{n-1}^{\dashv}$, $d_i (T_n^{\vdash}) = T^\vdash_{n-1}$ and $d_i (T^\perp_n) = T^\perp_{n-1}$. As a result, we have
\begin{align*}
    R_0^{k; i, l} (T_n^\dashv) = T_k^\dashv, ~  R_0^{k; i, l} (T_n^\vdash) = T_k^\vdash, ~ R_0^{k; i, l} (T_n^\perp) = T_k^\perp,\\
    R_i^{k; i, l} (T_n^\dashv) = T_l^\dashv, ~  R_i^{k; i, l} (T_n^\vdash) = T_l^\vdash, ~ R_i^{k; i, l} (T_n^\perp) = T_l^\perp.
\end{align*}
Let $(D, \{ \pi_k \}_{k=1}^\infty)$ be a $Triass_\infty$-algebra. Then it follows that $(D, \{ \pi_k^\dashv \}_{k=1}^\infty)$, $(D, \{ \pi_k^\vdash \}_{k=1}^\infty)$ and $(D, \{ \pi_k^\perp \}_{k=1}^\infty)$ are all $A_\infty$-algebras, where
\begin{align*}
    \pi_k^\dashv = \pi_k|_{T_k^{\dashv} \otimes D^{\otimes k}}, ~~  \pi_k^\vdash = \pi_k|_{T_k^{\vdash} \otimes D^{\otimes k}} ~~ \text{ and } ~~  \pi_k^\perp = \pi_k|_{T_k^{\perp} \otimes D^{\otimes k}}, \text{ for } k \geq 1.
\end{align*}
\end{remark}

\medskip

Let $D = \oplus_{i \in \mathbb{Z}} D_i$ be a graded vector space. For each $n \in \mathbb{Z}$, let $\mathrm{Hom}_n ( \mathbb{K} [T_\bullet] \otimes \overline{T}(D), D)$ be the space of degree $n$ graded linear maps from the graded space
\begin{align*}
     \mathbb{K} [T_\bullet] \otimes \overline{T}(D) = ( \mathbb{K} [T_1] \otimes D) \oplus ( \mathbb{K} [T_2] \otimes D^{\otimes 2}) \oplus \cdots \oplus ( \mathbb{K} [T_n] \otimes D^{\otimes n}) \oplus \cdots
\end{align*}
to the space $D$. Thus, an element $\pi \in \mathrm{Hom}_n  ( \mathbb{K} [T_\bullet] \otimes \overline{T}(D), D)$ is of the form $\pi = \sum_{k=1}^\infty \pi_k$, where $\pi_k : \mathbb{K} [T_k] \otimes D^{\otimes k} \rightarrow D$ (for $k \geq 1$) is a degree $n$ graded linear map. For any 
\begin{align*}
\pi = \sum_{k=1}^\infty \pi_k \in \mathrm{Hom}_m  ( \mathbb{K} [T_\bullet] \otimes \overline{T}(D), D) ~ \text{ and } ~ \varpi = \sum_{l=1}^\infty \varpi_l \in \mathrm{Hom}_n  ( \mathbb{K} [T_\bullet] \otimes \overline{T}(D), D),
\end{align*}
we define an element $\{ \! [ \pi, \varpi ] \! \} \in \mathrm{Hom}_{m+n}  ( \mathbb{K} [T_\bullet] \otimes \overline{T}(D), D)$ by
\begin{align*}
    \{ \! [ \pi, \varpi ] \! \} = \sum_{r=1}^\infty \sum_{k+l=r+1} ( \pi_k \diamond \varpi_l - (-1)^{mn} \varpi_l \diamond \pi_k ),
\end{align*}
where
\begin{align*}
    (\pi_k \diamond \varpi_l) & (T; x_1, \ldots, x_r) \\
    &= \sum_{i=1}^k (-1)^{|x_1| + \cdots + |x_{i-1}|} ~ \pi_k \big( R^{k; i, l}_0 (T); x_1, \ldots, x_{i-1}, \varpi_l \big( R^{k; i, l}_i (T); x_i, \ldots, x_{i+l-1} \big), x_{i+l}, \ldots, x_r   \big),
\end{align*}
for $T \in T_r$ and $x_1, \ldots, x_r \in D$. Then it turns out that $\big( \oplus_{n \in \mathbb{Z}}  \mathrm{Hom}_n  ( \mathbb{K} [T_\bullet] \otimes \overline{T}(D), D) , \{ \! [~,~] \! \}  \big)$ is a graded Lie algebra. Note that an element $\pi = \sum_{k=1}^\infty \pi_k \in \mathrm{Hom}_1  ( \mathbb{K} [T_\bullet] \otimes \overline{T}(D), D)$ is a Maurer-Cartan element of the above graded Lie algebra if and only if the collection $\{ \pi_k: \mathbb{K} [T_k] \otimes D^{\otimes k} \rightarrow D \}_{k=1}^\infty$ defines a $Triass_\infty$-algebra structure on $D$.

Let $(A, \{ \mu_k \}_{k=1}^\infty)$ be an $A_\infty$-algebra and $(B, \{ \nu_k \}_{k=1}^\infty, \{ \eta_k \}_{k=1}^\infty)$ be an $A_\infty$-representation. Then for any nonzero $\lambda$,  the graded vector space $A \oplus B$ can be equipped with a $Triass_\infty$-algebra structure with the operations $\{ \pi_k : \mathbb{K} [T_k] \otimes (A \oplus B)^{\otimes k} \rightarrow A \oplus B \}_{k=1}^\infty$ given by
\begin{align*}
&\pi_k \big(  T; (a_1, x_1), \ldots, (a_k, x_k)  \big) \\
&= \big(  \mu_k (a_1, \ldots, a_k) , \lambda^{l-2} ~(\eta_k + \nu_k) \big(  P(x_1) , \ldots, P(x_{i_1 -1}), x_{i_1}, P(x_{i_1 + 1}), \ldots, x_{i_1 + \cdots + i_{l-1}}, \ldots, P(x_k)   \big) \big),
\end{align*}
when the tree $T \in T_k$ can be uniquely writtens as $T = T^{1} \vee \cdots \vee T^{l }$, for some unique $(i_1 - 1)$-tree $T^{1} \in T_{i_1 - 1}, \ldots, (i_l -1)$-tree $T^{l} \in T_{i_l -1}$.

It follows that the above $Triass_\infty$-algebra $(A \oplus B, \{ \pi_k \}_{k=1}^\infty)$ gives rise to a Maurer-Cartan element $\pi = \sum_{k=1}^\infty \pi_k \in \mathrm{Hom}_1 ( \mathbb{K} [T_\bullet] \otimes \overline{T}(A \oplus B), A \oplus B)$ in the graded Lie algebra 
\begin{align*}
\mathfrak{g} = \big( \oplus_{n \in \mathbb{Z}} \mathrm{Hom}_n ( \mathbb{K} [T_\bullet] \otimes \overline{T}(A \oplus B), A \oplus B)  , \{ \! [ ~,~ ] \! \} \big).
\end{align*}
We define a graded subspace $\mathfrak{a} = \oplus_{n \in \mathbb{Z}} \mathrm{Hom}_n ( \mathbb{K} [T_\bullet] \otimes \overline{T}( B), A)$. Then $\mathfrak{a} \subset \mathfrak{g}$ is a graded Lie subalgebra. Let $P: \mathfrak{g} \rightarrow \mathfrak{g}$ be the projection map on the subspace $\mathfrak{a}$. Then $(\mathfrak{g}, \mathfrak{a}, p, \pi)$ is a $V$-data. As a consequence of Theorem \ref{voro-thm}, we get the following result.

\begin{thm}
Let $(A, \{ \mu_k \}_{k=1}^\infty)$ be an $A_\infty$-algebra and $(B, \{ \nu_k \}_{k=1}^\infty, \{ \eta_k \}_{k=1}^\infty)$ be an $A_\infty$-representation. Then the graded vector space $\mathfrak{a} =  \oplus_{n \in \mathbb{Z}} \mathrm{Hom}_n ( \mathbb{K} [T_\bullet] \otimes \overline{T}( B), A)$ inherits a $L_\infty$-algebra structure with the operations $\{ l_k : \mathfrak{a}^{\otimes k} \rightarrow \mathfrak{a} \}_{k=1}^\infty$ are given by
\begin{align*}
    l_k (\gamma_1, \ldots, \gamma_k) = p \{ \! [   \cdots \{ \! [  \{ \! [ \pi, \gamma_1  ] \! \} , \gamma_2 ] \! \}, \ldots, \gamma_k ] \! \},
\end{align*}
for $k \geq 1$ and homogeneous elements $\gamma_1, \ldots, \gamma_k \in \mathfrak{a}$.
\end{thm}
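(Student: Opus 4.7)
The plan is to apply Voronov's higher derived brackets construction (Theorem \ref{voro-thm}(i)) to the $V$-data $(\mathfrak{g}, \mathfrak{a}, p, \pi)$ described in the paragraph preceding the statement. The only work is to verify the three structural conditions required for a $V$-data, after which the $L_\infty$-structure and the explicit formula for $\{l_k\}$ are immediate from Voronov's theorem.

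First I would check that $\mathfrak{a}= \oplus_{n\in\mathbb{Z}} \mathrm{Hom}_n(\mathbb{K}[T_\bullet]\otimes \overline{T}(B),A)$ is an abelian graded Lie subalgebra of $\mathfrak{g}$. For any $f,g\in\mathfrak{a}$ and any constituent $f_k\diamond g_l$ of the bracket, the map $g_l$ produces an output in $A$, whereas every input slot of $f_k$ expects an element of $B$; hence $f_k\diamond g_l=0$ for all $k,l$, giving $\{\![f,g]\!\}=0$. The same slot/type argument shows $\mathfrak{a}$ is closed under the bracket (trivially, since the bracket vanishes). Next I would verify that $\ker(p)\subset \mathfrak{g}$ is a graded Lie subalgebra. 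The subspace $\ker(p)$ consists of all multilinear maps on $A\oplus B$ that either take at least one $A$-input or land in $B$. A straightforward case check on $f_k\diamond g_l$ (distinguishing whether $g_l$ contributes to an $A$- or $B$-slot of $f_k$, and whether $f_k,g_l$ output in $A$ or $B$) shows that $f\diamond g\in\ker(p)$ whenever $f,g\in\ker(p)$, so $\ker(p)$ is closed under $\{\![~,~]\!\}$.

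Third, I would verify that $\pi=\sum_k \pi_k\in \mathfrak{g}_1$ is a Maurer-Cartan element, i.e.\ $\{\![\pi,\pi]\!\}=0$. This is precisely the assertion that the operations $\{\pi_k\}$ define a $Triass_\infty$-algebra structure on $A\oplus B$ (via the correspondence between Maurer-Cartan elements of $\mathfrak{g}$ and $Triass_\infty$-structures on $A\oplus B$ established just before the theorem). The $Triass_\infty$-identities (\ref{tri-inf-iden}) for the prescribed $\pi_k$ unpack, according to the three possible positions of the distinguished leaves of the tree $T$ relative to the grafting decomposition of $T$, into exactly the $A_\infty$-axioms for $(A,\{\mu_k\})$, the $A_\infty$-representation axioms for $(B,\{\nu_k\},\{\eta_k\})$, and the cross-compatibilities between them. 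Finally, $\pi\in\ker(p)_1$ since $\pi_k$ always produces components in $A$ or $B$ that involve at least one $A$-input or $B$-output—so $p(\pi)=0$.

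With the four conditions verified, $(\mathfrak{g},\mathfrak{a},p,\pi)$ is a bona fide $V$-data, and Theorem \ref{voro-thm}(i) immediately endows $\mathfrak{a}$ with a $L_\infty$-algebra structure whose structure maps are $l_k(\gamma_1,\dots,\gamma_k)=p\{\![\cdots\{\![\{\![\pi,\gamma_1]\!\},\gamma_2]\!\},\dots,\gamma_k]\!\}$, which is the claimed formula. The main obstacle in this plan is the bookkeeping in the third step: one must match the tree-indexed sum in the $Triass_\infty$-identity (\ref{tri-inf-iden}) against the $A_\infty$ and representation identities, keeping careful track of the grafting decompositions $T=T^{1}\vee\cdots\vee T^{l}$ and the positions $i_1,\ldots,i_{l-1}$ where $B$-entries appear; however, since the definition of $\pi_k$ was already designed so that these identities line up (mirroring Proposition \ref{l-triass} in the homotopy setting), the verification is routine once one organizes the summation by the shape of $T$.
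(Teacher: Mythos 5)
Your proposal is correct and follows exactly the paper's route: the paper likewise takes the $V$-data $(\mathfrak{g},\mathfrak{a},p,\pi)$ built in the preceding paragraph and invokes Theorem \ref{voro-thm}(i), with the Maurer--Cartan property $\{\![\pi,\pi]\!\}=0$ supplied by the $Triass_\infty$-structure on $A\oplus B$. The only difference is that the paper asserts the $V$-data conditions without spelling them out, whereas you verify each one (abelianness of $\mathfrak{a}$, closure of $\ker(p)$, $\pi\in\ker(p)_1$, and $\{\![\pi,\pi]\!\}=0$) explicitly.
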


\begin{defn}
Let $(A, \{ \mu_k \}_{k=1}^\infty)$ be an $A_\infty$-algebra and $(B, \{ \nu_k \}_{k=1}^\infty, \{ \eta_k \}_{k=1}^\infty)$ be an $A_\infty$-representation. A homotopy relative averaging operator of weight $\lambda$ is a Maurer-Cartan element of the $L_\infty$-algebra $(\mathfrak{a}, \{ l_k \}_{k=1}^\infty)$.
\end{defn}

It follows that a homotopy relative averaging operator of weight $\lambda$ is an element $P = \sum_{k=1}^\infty P_k \in \mathrm{Hom}_0 ( \mathbb{K}[T_\bullet] \otimes \overline{T}(B), A)$ that satisfies
\begin{align*}
\sum_{k=1}^\infty \frac{1}{k!} l_k (P, \ldots, P) = 0 \quad \text{  equivalently} \quad p \big(  e^{   \{ \! [ -, P] \! \} } \pi  \big) = 0.
\end{align*}

In Proposition \ref{ravg-ind-triass}, we have seen that a relative averaging operator of any nonzero weight $\lambda$ induces a triassociative algebra structure. Here we will generalize this result in the homotopy context.

\begin{prop}
Let $\Pi$ be a homotopy relative averaging operator of weight $\lambda$. Then the graded vector space $B$ inherits a $Triass_\infty$-algebra structure with the operations $\{ \pi_k^P : \mathbb{K} [T_k] \otimes B^{\otimes k} \rightarrow B \}_{k=1}^\infty$ are given by
\begin{align*}
\pi_ k^P (T; x_1, \ldots, x_k ) = \big(   e^{   \{ \! [ -, P ] \! \} } \pi \big) (T; x_1, \ldots, x_k),
\end{align*}
for all $k \geq 1$, $T \in T_k$ and $x_1, \ldots, x_k \in B$.
\end{prop}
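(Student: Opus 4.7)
The plan is to exploit the $V$-data structure together with the standard gauge-transformation principle for Maurer--Cartan elements. Set $\phi := e^{\{\![-,P]\!\}}$ and $\pi' := \phi(\pi)$, where $\pi = \sum_k \pi_k \in \mathfrak{g}_1$ is the degree-one Maurer--Cartan element encoding the $Triass_\infty$-structure on $A\oplus B$ constructed just before the statement. Because $P\in\mathfrak{a}_0\subset\mathfrak{g}_0$ has degree zero, $\mathrm{ad}_P := \{\![-,P]\!\}$ is a degree-zero derivation of $(\mathfrak{g},\{\![~,~]\!\})$. By the weakly filtered hypothesis (or, arity by arity, by the fact that only finitely many terms contribute to each fixed output arity), the formal exponential $\phi=\sum_{k\ge 0}(1/k!)\,\mathrm{ad}_P^{\,k}$ is a well-defined graded Lie algebra automorphism of $\mathfrak{g}$. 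Hence $\{\![\pi',\pi']\!\}=\phi\{\![\pi,\pi]\!\}=0$, so $\pi'$ is again Maurer--Cartan in $\mathfrak{g}$ and therefore encodes a $Triass_\infty$-structure on the full graded space $A\oplus B$.

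Next I would check that, when restricted to pure $B$-inputs, $\pi'$ takes values in $B$. First, $p(\pi)=0$, because the explicit formula for $\pi$ gives $\pi_k(T;(0,x_1),\ldots,(0,x_k))$ an $A$-component equal to $\mu_k(0,\ldots,0)=0$. Using the identity $l_k(\gamma_1,\ldots,\gamma_k)=p\{\![\cdots\{\![\{\![\pi,\gamma_1]\!\},\gamma_2]\!\},\ldots,\gamma_k]\!\}=p(\mathrm{ad}_P^{\,k}\pi)$ when each $\gamma_i=P$, the Maurer--Cartan equation for $P$ reads
\[
  0 \;=\; \sum_{k\ge 1}\frac{1}{k!}\,l_k(P,\ldots,P) \;=\; p\!\left(\sum_{k\ge 1}\frac{1}{k!}\,\mathrm{ad}_P^{\,k}\pi\right) \;=\; p(\pi')-p(\pi) \;=\; p(\pi').
\]
Hence $\pi'_k(T;x_1,\ldots,x_k)\in B$ for all $T\in T_k$ and $x_1,\ldots,x_k\in B$, legitimating the definition of $\pi^P_k$ as a map $\mathbb{K}[T_k]\otimes B^{\otimes k}\to B$.

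For the final step, introduce $\mathfrak{b} := \bigoplus_n \mathrm{Hom}_n(\mathbb{K}[T_\bullet]\otimes\overline{T}(B),B)$, equipped with the bracket given by the same formula (\ref{tri-circ}) as on $\mathfrak{g}$. In the composition $\pi'_k\diamond\pi'_l$, the inner call $\pi'_l\bigl(R_i^{k;i,l}(T);x_i,\ldots,x_{i+l-1}\bigr)$ is on pure $B$-inputs and hence $B$-valued by the previous step, so the surrounding $\pi'_k$ is likewise evaluated on pure $B$-inputs and again yields a $B$-valued output. Therefore, restricting $\pi'_k\diamond\pi'_l$ to pure $B$-inputs coincides with $\pi^P_k\diamond\pi^P_l$, and the Maurer--Cartan identity $\{\![\pi',\pi']\!\}=0$ restricts to $\{\![\pi^P,\pi^P]\!\}=0$ in $\mathfrak{b}$. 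This is precisely the family of identities (\ref{tri-inf-iden}) making $(B,\{\pi^P_k\}_{k=1}^\infty)$ a $Triass_\infty$-algebra.

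The only place where genuine care is needed, rather than routine manipulation, is this last restriction step: one must verify that no inner evaluation picks up a stray $A$-valued component that would spoil the claim, and this is exactly what the Maurer--Cartan condition $p(\pi')=0$ on $P$ is designed to rule out. The rest of the argument is standard gauge-transformation bookkeeping in a (weakly filtered) graded Lie algebra.
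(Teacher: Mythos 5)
Your proof is correct and follows essentially the same route as the paper's: both transport the Maurer--Cartan element $\pi$ by the automorphism $e^{\{\![\,-,P]\!\}}$ to get a new Maurer--Cartan element of $\mathfrak{g}$ and then restrict to pure $B$-inputs. You additionally spell out the step the paper leaves implicit --- that the Maurer--Cartan equation for $P$ is precisely the condition $p\big(e^{\{\![\,-,P]\!\}}\pi\big)=0$ guaranteeing that the restriction to $B^{\otimes k}$ actually lands in $B$ --- which is a worthwhile clarification rather than a departure.
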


\begin{proof}
Since $\{ \! [ \pi, \pi ] \! \} = 0$, it follows that
\begin{align*}
    \{ \! [   e^{   \{ \! [ - , P ] \! \}   }  \pi, e^{ 
   \{ \! [ -, P ] \! \} } \pi  ] \! \} = e^{   \{ \! [ -, P ] \! \} } \{ \! [ \pi, \pi ] \! \} = 0.
\end{align*}
Therefore, $ e^{   \{ \! [ ~, P ] \! \}   }  \pi$ is a Maurer-Cartan element of the graded Lie algebra $\mathfrak{g}$. As a consequence, the collection $\{ \pi_k^P \}_{k=1}^\infty$ defines a $Triass_\infty$-algebra structure on $B$, where $\pi_k^P = (  e^{   \{ \! [ -, P ] \! \}   }  \pi )|_{  \mathbb{K}[T_k] \otimes B^{\otimes k} }$ for $k \geq 1$.
\end{proof}





\noindent {\bf Acknowledgements.}  Ramkrishna Mandal would like to thank the Government of India for supporting his work through the Prime Minister Research Fellowship. Both authors thank the Department of Mathematics, IIT Kharagpur for providing the beautiful academic atmosphere where the research has been carried out.

\medskip

\noindent {\bf Data Availability Statement.} Data sharing does not apply to this article as no new data were created or analyzed in this study.

\end{document}